\definecolor{verylight}{gray}{0.97}
\definecolor{light}{gray}{0.9}
\definecolor{medium}{gray}{0.85}
\definecolor{dark}{gray}{0.6}
\def\NZQ{\Bbb}               
\def\NN{{\NZQ N}}
\def\ZZ{{\NZQ Z}}
\def\FF{{\NZQ F}}
\def\frk{\frak}               
\def\mm{{\frk m}}
\def\Phi{{\frk n}}
\def\Phi{{\frk N}}
\def\cc{{\frk c}}
\def\MI{{\mathcal I}}
\def\MJ{{\mathcal J}}
\def\MQ{{\mathcal Q}}
\def\MT{{\mathcal T}}
\def\MS{{\mathcal S}}
\def\MG{{\mathcal G}}
\def\yb{{\bold y}}
\def\opn#1#2{\def#1{\operatorname{#2}}} 
\opn\chara{char} \opn\length{\ell} \opn\pd{pd} \opn\rk{rk}
\opn\projdim{proj\,dim} \opn\injdim{inj\,dim} \opn\rank{rank}
\opn\depth{depth} \opn\grade{grade} \opn\height{height}\opn\coheight{coheight}
\opn\embdim{emb\,dim} \opn\codim{codim}
\opn\Tr{Tr} \opn\bigrank{big\,rank}
\opn\superheight{superheight}\opn\lcm{lcm}
\opn\trdeg{tr\,deg}
\opn\reg{reg} \opn\lreg{lreg} \opn\ini{in} \opn\lpd{lpd}
\opn\size{size}\opn\bigsize{bigsize}
\opn\cosize{cosize}\opn\bigcosize{bigcosize}
\opn\sdepth{sdepth}\opn\sreg{sreg}
\opn\link{link}\opn\fdepth{fdepth}\opn\type{type}
\opn\GL{GL}
\opn\div{div} \opn\Div{Div} \opn\cl{cl} \opn\Cl{Cl}
\opn\Spec{Spec} \opn\Supp{Supp} \opn\supp{supp} \opn\Sing{Sing}
\opn\Ass{Ass} \opn\Min{Min}\opn\Mon{Mon} \opn\dstab{dstab} \opn\astab{astab}
\opn\Syz{Syz}
\opn\Ann{Ann} \opn\Rad{Rad} \opn\Soc{Soc}
\opn\Im{Im} \opn\Ker{Ker} \opn\Coker{Coker} \opn\Am{Am}
\opn\Hom{Hom} \opn\Tor{Tor} \opn\Ext{Ext} \opn\End{End}
\opn\Aut{Aut} \opn\id{id}
\opn\nat{nat}
\opn\pff{pf}
\opn\Pf{Pf} \opn\GL{GL} \opn\SL{SL} \opn\mod{mod} \opn\ord{ord}
\opn\Gin{Gin} \opn\Hilb{Hilb}\opn\sort{sort}
\opn\Proj{Proj}
\opn\aff{aff} \opn\con{conv} \opn\relint{relint} \opn\st{st}
\opn\lk{lk} \opn\cn{cn} \opn\core{core} \opn\vol{vol}
\opn\link{link} \opn\star{star}\opn\lex{lex}
\opn\gr{gr}
\def\pot#1#2{#1[\kern-0.28ex[#2]\kern-0.28ex]}
\opn\dirlim{\underrightarrow{\lim}}
\opn\inivlim{\underleftarrow{\lim}}
\let\union=\cup
\let\sect=\cap
\let\tensor=\otimes
\let\iso=\cong
\let\to=\rightarrow
\def\Implies{\ifmmode\Longrightarrow \else
        \unskip${}\Longrightarrow{}$\ignorespaces\fi}
\def\implies{\ifmmode\Rightarrow \else
        \unskip${}\Rightarrow{}$\ignorespaces\fi}
\def\iff{\ifmmode\Longleftrightarrow \else
        \unskip${}\Longleftrightarrow{}$\ignorespaces\fi}
\newtheorem{Theorem}{Theorem}[section]
 \newtheorem{Lemma}[Theorem]{Lemma}
 \newtheorem{Corollary}[Theorem]{Corollary}
 \newtheorem{Proposition}[Theorem]{Proposition}
 \newtheorem{Remark}[Theorem]{Remark}
 \newtheorem{Example}[Theorem]{Example}
 \newtheorem{Examples}[Theorem]{Examples}
 \newtheorem{Definition}[Theorem]{Definition}
 \newtheorem{Problem}[Theorem]{Problem}
\let\epsilon\varepsilon
\let\kappa=\varkappa
\def\qed{\ifhmode\textqed\fi
      \ifmmode\ifinner\quad\qedsymbol\else\dispqed\fi\fi}
\def\textqed{\unskip\nobreak\penalty50
       \hskip2em\hbox{}\nobreak\hfil\qedsymbol
       \parfillskip=0pt \finalhyphendemerits=0}
\def\dispqed{\rlap{\qquad\qedsymbol}}
\opn\dis{dis}
\def\pnt{{\raise0.5mm\hbox{\large\bf.}}}
\opn\Lex{Lex}
\begin{document}
 \title {Syzygies of Hibi rings}

 \author {Viviana Ene}

\address{Viviana Ene, Faculty of Mathematics and Computer Science, Ovidius University, Bd.\ Mamaia 124,
 900527 Constanta, Romania, and
 \newline
 \indent Simion Stoilow Institute of Mathematics of the Romanian Academy, Research group of the project  ID-PCE-2011-1023,
 P.O.Box 1-764, Bucharest 014700, Romania} \email{vivian@univ-ovidius.ro}

\thanks{The  author was supported by the grant UEFISCDI,  PN-II-ID-PCE- 2011-3-1023.}
\thanks{Part of this survey was presented in a series of three lectures in the 22nd National School on Algebra, September 1-5, 2014, Bucharest, Romania.}

 \begin{abstract}
We survey recent results on resolutions of Hibi rings.
 \end{abstract}

\thanks{}
\subjclass[2010]{16E05, 16E65, 13P10, 05E40, 06A11, 03G10, 06B05}
\keywords{Distributive lattices, Hibi rings, regularity, Cohen-Macaulay rings. level algebras, Gorenstein algebras}
 \maketitle
\tableofcontents

\section*{Introduction}

Hibi rings and ideals are algebraic objects which arise naturally from combinatorics. They were introduced in 1987 by Hibi in his paper \cite{Hibi}. Hibi rings appear in various combinatorial and algebraic contexts. For example, the coordinate ring of a flag variety for $\GL_n$ is a flat deformation of the Hibi ring on a certain poset known as the Gelfand--Tsetlin poset. Recently, it was observed that several other algebras which arise naturally in representation theory can be described by using Hibi rings.

Let $L$ be a finite distributive lattice. By the well-known theorem of Birkhoff, $L$ is the ideal lattice $\MI(P)$ of its subposet $P$ of join-irreducible elements. Let $P=\{p_1,\ldots,p_n\}$ and let $R=K[t,x_1,\ldots,x_n]$ be the polynomial ring in $n+1$ variables over a field $K.$ The Hibi ring associated with $L$ is the toric ring generated over $K$ by the  monomials $u_{\alpha}=t\prod_{p_i\in \alpha}x_i$ where $\alpha\in L.$ The ring $R[L]$ may be viewed as a standard graded algebra over $K$ if we set $\deg u_{\alpha}=1$ for 
all $\alpha\in L.$

In \cite{Hibi}, Hibi showed that $R[L]$ is an algebra with straightening laws  on $L$ over $K.$ Hence, its defining ideal $I_L$ is generated by the straightening relations of $R[L].$ Let $K[L]$ be the polynomial ring over $K$ in the variables $x_{\alpha}$ with $\alpha\in L.$ Then $I_L\subset K[L]$ is generated by all the binomials $x_\alpha x_\beta-x_{\alpha\cap\beta} x_{\alpha\cup\beta}$ with 
$\alpha,\beta\in L,$ incomparable elements. This is called the Hibi ideal of $L.$ Obviously, $I_L$ is a prime ideal. If $L$ is a chain, then $R[L]=K[L]$, thus any polynomial ring in finitely many variables may be considered as a Hibi ring. Hibi showed that $R[L]$ is a Cohen-Macaulay normal domain. Moreover, he characterized the Gorenstein rings $R[L]$ in terms of the subposet $P$ of $L.$

In the last decades, several properties of Hibi rings and ideals have been investigated. For example,  Gr\"obner bases of Hibi ideals were studied in \cite{AHH, HHNach, Q2} and \cite[Chapter 10]{HH10}.  Other properties, like strongly Koszulness, Koszul filtration, and the divisor class group of a Hibi ring,  were examined in \cite{EHH2,HHN,HHR}.

A number of authors have considered a more general construction. For any finite lattice $L,$ that is, not necessarily distributive, one may consider the graded ideal $I_L=(x_\alpha x_\beta-x_{\alpha\cap\beta} x_{\alpha\cup\beta}: \alpha,\beta\in L, \alpha, \beta \text 
{ incomparable })\subset K[L].$ The quotient ring $K[L]/I_L$ may be viewed as the projective coordinate ring of a projectively embedded 
scheme $V(L)=\Proj(K[L]/I_L)$. As Hibi showed in \cite{Hibi}, $V(L)$ is a toric variety if and only if $L$ is distributive.  The geometric properties of this variety were studied in \cite{W}. For arbitrary lattices, the ideal $I_L$ may be even non radical. However, classes of non-distributive lattices for which $I_L$ is a radical ideal can be identified. Such a class is given in \cite{EHi}. In  paper \cite{EHi}
 it was also shown that the minimal prime ideals of the radical ideal $I_L$ can be characterized in terms of the combinatorics of the lattice $L.$

The notion of Hibi ring associated with a distributive lattice on a poset $P$ was generalized in \cite{EHM}. Generalized Hibi rings and some of their properties are also  surveyed in this paper. 

Several recent works have approached the resolution of Hibi ideals attached to distributive lattices. In this frame, one may of course ask whether the homological invariants of $I_L$ or, even more precise, the graded Betti numbers of $I_L$ may be related to  the combinatorics  
 of $L$ or of its poset $P.$ The projective dimension of $I_L$ and its regularity are already known. But almost nothing is known about the graded Betti numbers of $I_L$ or, equivalently, of $R[L].$ Of course, we would like to have formulas (or at least sharp bounds) for 
the graded Betti numbers in terms of the numerical invariants of $L$ or $P.$ Certainly, this is an interesting area of future research.

In what follows, we present the organization of this survey. In Section~\ref{Hibisection}, after reviewing the necessary background of combinatorics and commutative algebra, we present the construction of  Hibi rings associated with finite distributive lattices as they were introduced by Hibi. We explain in Subsection~\ref{subsectHibi} their structure of algebras with straightening laws. Theorem~\ref{GBHibi} shows that the generators of the Hibi ideal $I_L$ form the reduced Gr\"obner basis of $I_L$ with respect to the reverse lexicographic order induced by a linear order of the variables $x_\alpha$ such that $x_\alpha<x_\beta$ if $\alpha\subset \beta$ in $L.$ As a consequence, it follows that the Hibi ring $R[L]$ is a Cohen-Macaulay domain. Subsection~\ref{subsectHibi} ends with a few comments regarding ideals associated with non-distributive lattices. In Subsection~\ref{canonicalHibi}, we present the combinatorial interpretation 
for the generators of the canonical module $\omega_L$ of $R[L]$. Theorem~\ref{GorensteinHibi} states that $R[L]$ is a Gorenstein ring if and only if the subposet $P$ of join-irreducible elements of $L$ is pure, that is, all its maximal chains have the same length. The last subsection of Section~\ref{Hibisection} presents generalized Hibi rings as they were defined in \cite{EHM}. For any  integer $r\geq 2$ and any finite poset $P,$ we show that the generalized Hibi ring $R_r(P)$ is an algebra with straightening laws on the lattice $\MI_r(P)$
 which consists of the $r$--multichains of $L=\MI(P).$ It then follows that the defining relations of the generalized Hibi ring are classical Hibi relations corresponding to the lattice $\MI_r(P),$ thus $R_r(P)$ is  the Hibi ring associated with $\MI_r(P).$ The poset 
$P^\prime$ of join-irreducible elements in $\MI_r(P)$ turns out to be isomorphic to the cartesian product of $P$ and the set 
$\{1,2,\ldots,r-1\}$ endowed with the natural order; see Theorem~\ref{thm43}. Therefore, the generalized Hibi ring $R_r(P)$ is Gorenstein 
if and only if $P$ is pure and if and only if the Hibi ring $R[\MI(P)]$ is Gorenstein; see Corollary~\ref{corgengor}.

Section~\ref{pseudo} is devoted to level and pseudo--Gorenstein Hibi rings. The notion of pseudo-Gorenstein algebra has been recently introduced in \cite{EHHSara}. In Subsection~\ref{pseudo-algebras}, we recall the necessary definitions and present a characterization of 
pseudo--Gorenstein algebras in Proposition~\ref{echivalent}. A combinatorial characterization of pseudo-Gorenstein Hibi rings is given in Theorem~\ref{classification}. Sufficient conditions for the levelness of Hibi rings were first given by Miyazaki \cite{M}; see Theorem~\ref{Myiazaki} and the remark after it. There are quite simple examples which show that none of those sufficient conditions is  necessary. Later on, in \cite{EHHSara}, a necessary condition for the levelness of Hibi rings was found. We give it in Theorem~\ref{alsoviviana} which states that if $L$ is level, then 
\begin{eqnarray}\label{neccond}
\label{inequality}
\height(x)+\depth(y)  \leq \rank \hat{P}+1
\end{eqnarray}
for all $x,y\in P$ such that $x$ covers $y$. Here $\hat{P}$ denotes the poset $P\union\{-\infty, \infty\}$ with $-\infty<x<\infty$ for all 
$x\in P,$ and, for any element $x\in \hat{P,}$ $\height x $ is the rank of the subposet of $\hat{P}$ which consists of all elements $y\in \hat{P}$ with $y\leq x,$ while $\depth x$ is the rank of the subposet of $\hat{P}$ which consists of all elements $y\in \hat{P}$ with $y\geq x.$
 Condition (\ref{neccond}) is also sufficient at least for a special class of planar lattices that satisfy a regularity condition as it is shown in Theorem~\ref{converse}. Section~\ref{pseudo} ends with a review on level and pseudo-Gorenstein generalized  Hibi rings.

The first subsection of Section~\ref{regsection} presents the formula for the regularity of the Hibi ring. If $L=\MI(P)$ is a distributive lattice, we have
\begin{equation}\label{intro}
\reg R[L]=|P|-\rank P -1.
\end{equation}
As a straightforward consequence of this formula, one gets the characterization of the lattices whose Hibi rings have a linear resolution; see Corollary~\ref{linear resolution}. Moreover, formula (\ref{intro}) allows us to characterize in combinatorial terms 
several classes of Hibi rings with small regularity. Subsection~\ref{regsubsection} ends with a short discussion on planar distributive lattices for which we may identify the regularity of the associated Hibi rings in terms of cyclic sublattices. Subsection~\ref{subsectlinrel} presents planar distributive lattices with the property that their Hibi ideals have linear syzygies, that is, 
$\beta_{1j}(I_L)=0$ 
for $j\geq 4;$ see Theorem~\ref{linrel}. We took the same approach used in \cite{EHH} for determining the linearly related polyomino ideals. Based on the results of this subsection, in the last part of Section~\ref{regsection} we are able to determine all the simple planar distributive lattices $L$ with the property that $R[L]$ has a pure resolution.  

Throughout this survey, we indicated references to the extensive literature on the fundamental notions and results of commutative algebra used in proofs. We assume as well  that the reader has a basic knowledge of Stanley-Reisner theory and Gr\"obner bases. For more information in these areas we recommend \cite[Chapter 5]{BHbook}, \cite{Sta2}, and \cite[Chapter 2]{HH10}, \cite{EH}.

\section{Hibi rings and their Gr\"obner bases}\label{Hibisection}

\subsection{Preliminaries of combinatorics}
\label{combinat}

In this section we review the definitions of the combinatorial objects that will be used throughout these lectures. For a comprehensive treatment and for references to the literature on this subject one may refer to the books of Stanley \cite{Sta1}
 and Birkhoff \cite{B}. 

\begin{Definition}\label{poset}{\em
A {\em partially ordered set} ({\em poset} in brief) is a set $P$ endowed with a {\em partial order} $\leq$, that is, a  relation which is
\begin{itemize}
	\item [(i)] reflexive:  $x\leq x$ for all $x\in P;$
	\item [(ii)] antisymmetric: for any $x,y\in P,$ if $x\leq y$ and $y\leq x,$ then $x=y;$ 
	\item [(iii)] transitive: for any $x,y,z\in P,$ if $x\leq y$ and $y\leq z,$ then $x\leq z.$
\end{itemize}
}
\end{Definition}

We use the notation $x\geq y$ if $y\leq x$ and $x<y$ if $x\leq y$ and $x\neq y.$ If $x\leq y$ or $y\leq x$ we say that 
$x,y$ are {\em comparable} in $P.$ Otherwise, $x,y$ are {\em incomparable.}

All the posets in these lectures are assumed to be finite.

\begin{Examples}\label{basics}{\em 

\begin{itemize}
	\item  [1.] Let $n\geq 1$ be an integer and  $[n]=\{1,2,\ldots,n\}$. This set is a poset with the natural order of integers. Any two elements of $[n]$ are comparable.
 \item  [2.] Let $B_n=2^{[n]}$ be the power set of $[n]$. $B_n$ is a partially ordered set with the inclusion. Obviously, not any 
two subsets of $[n]$ are comparable with respect to inclusion.
\item [3.] Let $n\geq 2$ be an integer and $D_n$ the set of all divisors of $n$. $D_n$ is partially ordered with respect to  divisibility.
\end{itemize}
}
\end{Examples}

Any finite poset $P$ is completely determined by its cover relations which are encoded in the  Hasse diagram of $P.$ We say that $y$ {\em covers} $x$ if $y>x$ and there is no $z\in P$ with $y>z>x.$ In this case we write $y\gtrdot x.$ The Hasse diagram of $P$ is a graph whose vertices are the elements of $P$ and the edges are the cover relations of $P.$

For example, in Figure~\ref{Boole}, the Hasse diagrams of $B_3$ and $D_{12}$ are displayed.

\begin{figure}[hbt]
\begin{center}
\psset{unit=0.5cm}
\begin{pspicture}(4,-1)(4,7)
\rput(-1,0){
\rput(0,2){$\bullet$}
\rput(0,4){$\bullet$}
\rput(2,0){$\bullet$}
\rput(2,2){$\bullet$}
\rput(4,2){$\bullet$}
\rput(4,4){$\bullet$}
\rput(2,4){$\bullet$}
\rput(2,6){$\bullet$}
\pspolygon(0,2)(2,0)(4,2)(2,4)
\pspolygon(0,4)(2,2)(4,4)(2,6)
\psline(0,2)(0,4)
\psline(2,0)(2,2)
\psline(4,2)(4,4)
\psline(2,4)(2,6)
\rput(2,-1){$B_3$}
}
\rput(5,0){
\rput(0,2){$\bullet$}
\rput(2,0){$\bullet$}
\rput(4,2){$\bullet$}
\rput(2,4){$\bullet$}
\rput(6,4){$\bullet$}
\rput(4,6){$\bullet$}
\pspolygon(0,2)(2,0)(6,4)(4,6)
\psline(4,2)(2,4)

\rput(2,-1){$D_{12}$}
}
\end{pspicture}
\end{center}
\caption{}\label{Boole}
\end{figure}

In Figure~\ref{example1}, we have the Hasse diagram of a poset $P$ with $5$ elements, $x,y,z,t,u$ 
with $z\gtrdot x, z\gtrdot y, t\gtrdot y, t\gtrdot x, u\gtrdot t.$

\begin{figure}[hbt]
\begin{center}
\psset{unit=0.8cm}
\begin{pspicture}(-1,0)(4,2)

\rput(0,0){$\bullet$}
\rput(0,1){$\bullet$}
\rput(2,0){$\bullet$}
\rput(2,1){$\bullet$}
\rput(2,2){$\bullet$}
\psline(0,0)(0,1)
\psline(2,0)(0,1)
\psline(2,0)(2,1)
\psline(2,1)(2,2)
\psline(0,0)(2,1)

\rput(0,-0.3){$x$}
\rput(2.3,-0.3){$y$}
\rput(0,1.3){$z$}
\rput(2.3,2){$u$}
\rput(2.3,1){$t$}
\end{pspicture}
\end{center}
\caption{The Hasse diagram}\label{example1}
\end{figure}
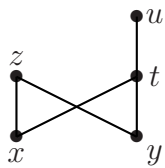

A {\em subposet} of $P$ is a subset $Q$ endowed with a partial order such that, for $x,y\in Q,$ we have 
$x\leq y$ in $Q$ if and only if $x\leq y$ in $P.$

For example, for the poset $P$ displayed in Figure~\ref{example1}, the poset $Q$ displayed in Figure~\ref{subposet} (a) 
is a subposet of $P$ while the poset $Q^\prime$ displayed in Figure~\ref{subposet} (b) is not.

\begin{figure}[hbt]
\begin{center}
\psset{unit=0.8cm}
\begin{pspicture}(-1,-1)(4,2)
\rput(-1,0){
\rput(0,0){$\bullet$}
\rput(0,1){$\bullet$}
\rput(2,0){$\bullet$}
\rput(2,1){$\bullet$}
\psline(0,0)(0,1)
\psline(2,0)(0,1)
\psline(2,0)(2,1)
\psline(0,0)(2,1)
\rput(1,-0.8){(a)}
\rput(0,-0.3){$x$}
\rput(2,-0.3){$y$}
\rput(0,1.3){$z$}
\rput(2,1.3){$t$}
}
\rput(3,0){
\rput(0,0){$\bullet$}
\rput(0,1){$\bullet$}
\rput(2,0){$\bullet$}
\rput(2,1){$\bullet$}
\psline(0,0)(0,1)
\psline(2,0)(2,1)
\psline(0,0)(2,1)
\rput(1,-0.8){(b)}
\rput(0,-0.3){$x$}
\rput(2,-0.3){$y$}
\rput(0,1.3){$z$}
\rput(2,1.3){$t$}
}
\end{pspicture}
\end{center}
\caption{}\label{subposet}
\end{figure}

Let $P$ be a poset and $x,y\in P$ with $x\leq y.$ The set 
\[
[x,y]=\{z\in P: x\leq z\leq y\}
\]
is called a ({\em closed}) {\em interval} in $P.$ Obviously, any interval of $P$ is a subposet of $P.$ For example, for the poset displayed in Figure~\ref{example1}, we have $[x,z]=\{x,z\}$ and $[y,u]=\{y,t,u\}.$

\begin{Definition}\label{morphism}{\em
Let $P$ and $Q$ be two posets. An order preserving map $f:P\to Q$ is called a {\em morphism} of posets. The posets $P$ and $Q$
 are called {\em isomorphic} if there exists a bijection $f:P\to Q$ which is a morphism of posets with the property that 
$f^{-1}$ is a morphism as well.
}
\end{Definition}

A partial order on $P$ is called a {\em total order} or {\em linear order}  if any two elements of $P$ are comparable, that is, for any $x,y\in P,$ we have either $x\leq y$ or $y\leq x.$ If $\leq $ is a total order on $P,$ we call $P$ a {\em totally ordered } set or {\em chain}. $P$ is an {\em antichain} or {\em clutter} if any two different elements of $P$ are incomparable. 

\begin{figure}[hbt]
\begin{center}
\psset{unit=0.8cm}
\begin{pspicture}(-1,-1)(4,3)
\rput(-2,0){
\rput(0,0){$\bullet$}
\rput(0,1){$\bullet$}
\rput(0,2){$\bullet$}
\rput(0,3){$\bullet$}
\psline(0,0)(0,3)
\rput(0,-0.8){Chain}
}
\rput(3,0){
\rput(0,0){$\bullet$}
\rput(1,0){$\bullet$}
\rput(2,0){$\bullet$}
\rput(3,0){$\bullet$}
\rput(4,0){$\bullet$}
\rput(2,-0.8){Antichain}
}
\end{pspicture}
\end{center}
\caption{}\label{Chain}
\end{figure}

Given the poset $P,$  a chain in $P$ is a  subposet $C$ of $P$ which is totally ordered. If $C$ is a chain of $P,$ 
$\ell(C)=|C|-1$ is the {\em length} of $C$. A chain $C: x_0<x_1<\cdots <x_r$ in $P$ is called {\em saturated} if 
$x_{i+1}\gtrdot x_i$ for all $i.$  

\begin{Definition}\label{graded}{\em 
Let $P$ be a poset. The {\em rank} of $P$ is 
\[
\rank P=\max\{\ell(C): C \text{ is a chain of }P\}.
\]
If every maximal chain of $P$ has the same length, then $P$ is called {\em graded} or {\em pure}.
}
\end{Definition}

For example, the posets of Figure~\ref{Boole} are graded of rank $3$ while the poset of Figure~\ref{example1} is not graded.

A {\em minimal} element of a poset $P$ is an element $x\in P$ such that, for any $y\in P$, if $y\leq x$ then $y=x.$
In other words, if $y,x$ are comparable, then $y\geq x.$ By dualizing the above conditions, that is, taking $\geq$ instead of $\leq,$ we define the {\em maximal} elements of $P.$ For example, in the poset displayed in Figure~\ref{example1} there are two minimal elements, namely $x,y$, and two maximal elements, $z,u.$

For a poset $P,$ $\hat{P}$ denotes the poset $P\cup\{-\infty,\infty\}$ where, for $x,y\in P,$ $x\leq y$ in $\hat{P}$ if and only if $x\leq y$ in $P$ and $-\infty<x<\infty$ for all $x\in P.$ For example, the poset $\hat{P}$ for the poset of Figure~\ref{example1} is displayed in Figure~\ref{hat}.

\begin{figure}[hbt]
\begin{center}
\psset{unit=0.8cm}
\begin{pspicture}(-1,-1)(4,3)

\rput(0,0){$\bullet$}
\rput(0,1){$\bullet$}
\rput(2,0){$\bullet$}
\rput(2,1){$\bullet$}
\rput(2,2){$\bullet$}
\rput(1,3){$\bullet$}
\rput(1,-1){$\bullet$}
\psline(0,0)(0,1)
\psline(2,0)(0,1)
\psline(2,0)(2,1)
\psline(2,1)(2,2)
\psline(0,0)(2,1)
\psline(0,1)(1,3)
\psline(2,2)(1,3)
\psline(0,0)(1,-1)
\psline(2,0)(1,-1)

\rput(-0.3,-0.3){$x$}
\rput(2.3,-0.3){$y$}
\rput(-0.3,1.3){$z$}
\rput(2.3,2){$u$}
\rput(2.3,1){$t$}
\rput(1,3.3){$\infty$}
\rput(1,-1.3){$-\infty$}
\end{pspicture}
\end{center}
\caption{$\hat{P}$}\label{hat}
\end{figure}

Clearly, if $\hat{P}$ is graded if and only if $P$ is graded.

For a graded poset of rank $n,$ one considers the {\em rank function} $\rho: P\to \{0,1,\ldots,n\}$ defined as follows:
\begin{itemize}
	\item [{}] $\rho(x)=0$ for any minimal element of $P;$
	\item [{}] $\rho(y)=\rho(x)+1$ for $y\gtrdot x$ in $P.$
\end{itemize}

If $\rho(x)=i,$ we say that $\rank x=i.$

\begin{Examples}\label{exrank}{\em 
1. Let $B_n$ be the Boolean poset on the set $[n]$. Then $B_n$ is graded of rank $n$ and, for $x\in B_n,$ $\rank x=|x|.$

2. Let $n\geq 2$ be an integer and $D_n$ the poset of the divisors of $n.$ The poset $D_n$ is graded of rank equal to the number of the prime divisors of $n$ and, for $x|n,$ $\rank x$ is equal to the number of the prime divisor of $x$ (in each case counted with multiplicity).
}
\end{Examples}

\subsubsection{Operations on posets} 
{\bf 1. Direct sums.} Let $P,Q$ be two posets on disjoint sets. The {\em direct sum} of $P$ and $Q$ is the poset $P+Q$ on 
the set $P\cup Q$ with the order defined as follows: $x\leq y$ in $P+Q$ if either $x,y\in P$ and $x\leq y$ in $P$ or 
$x,y\in Q$ and $x\leq y$ in $Q.$ 
A poset $P$ which can be written as a direct sum of to subposets is called {\em disconnected}. Otherwise, $P$ is {\em connected}.

{\bf 2. Ordinal sum.} The {\em ordinal sum} $P\oplus Q$ of the disjoint posets $P,Q$ is the poset on the set $P\cup Q$ with 
the following order. If $x,y\in P\oplus Q,$ then $x\leq y$ if either $x,y\in P$ and $x\leq y$ in $P$ or 
$x,y\in Q$ and $x\leq y$ in $Q$ or $x\in P$ and $y\in Q.$

\begin{Example}{\em In Figure~\ref{ordinal sum} the ordinal sum of two posets is displayed. }

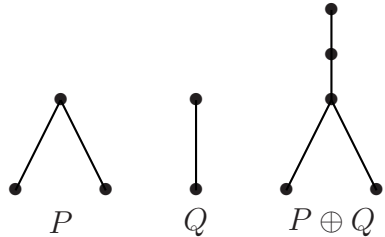
\begin{figure}[hbt]
\begin{center}
\psset{unit=0.6cm}
\begin{pspicture}(7,-1)(4,5)

\rput(0,0){$\bullet$}
\rput(1,2){$\bullet$}
\rput(2,0){$\bullet$}
\rput(4,0){$\bullet$}
\rput(4,2){$\bullet$}
\rput(6,0){$\bullet$}
\rput(8,0){$\bullet$}
\rput(7,2){$\bullet$}
\rput(7,3){$\bullet$}
\rput(7,4){$\bullet$}
\psline(0,0)(1,2)
\psline(2,0)(1,2)
\psline(4,0)(4,2)
\psline(6,0)(7,2)
\psline(8,0)(7,2)
\psline(7,2)(7,4)

\rput(1,-0.7){$P$}
\rput(4,-0.7){$Q$}
\rput(7,-0.7){$P\oplus Q$}
\end{pspicture}
\end{center}
\caption{Ordinal sum}\label{ordinal sum}
\end{figure}
\end{Example}

{\bf 3. Cartesian product.} Let $P$ and $Q$ be two posets. The cartesian product of $P$ and $Q$ is the poset $P\times Q$ on 
the set $P\times Q$ such that $(x,y)\leq (x^\prime,y^\prime)$ in $P\times Q$ if $x\leq x^\prime$ in $P$ and $y\leq y^\prime$ in $Q.$

\begin{Example}{\em Figure~\ref{cproduct} shows a cartesian product of two posets.}
\begin{figure}[hbt]
\begin{center}
\psset{unit=0.6cm}
\begin{pspicture}(8,-1)(4,5)

\rput(0,0){$\bullet$}
\rput(1,2){$\bullet$}
\rput(2,0){$\bullet$}
\rput(4,0){$\bullet$}
\rput(4,2){$\bullet$}
\rput(6,0){$\bullet$}
\rput(8,0){$\bullet$}
\rput(7,2){$\bullet$}
\rput(6,2){$\bullet$}
\rput(8,2){$\bullet$}
\rput(7,4){$\bullet$}
\psline(0,0)(1,2)
\psline(2,0)(1,2)
\psline(4,0)(4,2)
\psline(6,0)(7,2)
\psline(8,0)(7,2)
\psline(7,2)(7,4)
\psline(6,2)(7,4)
\psline(6,0)(6,2)
\psline(8,0)(8,2)
\psline(8,2)(7,4)

\rput(1,-0.7){$P$}
\rput(4,-0.7){$Q$}
\rput(7,-0.7){$P\times Q$}
\end{pspicture}
\end{center}
\caption{Cartesian product}\label{cproduct}
\end{figure}
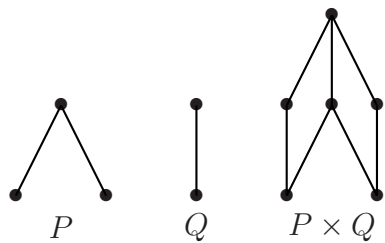
\end{Example}

{\bf 4. The dual poset.} Let $P$ be a poset. The {\em dual} of $P$ is the poset $P^\ast$ on the same set as $P$ such that 
$x\leq y$ in $P^\ast$ if and only if $x\geq y$ in $P.$ If $P$ and $P^\ast$ are isomorphic, then $P$ is called {\em self-dual}.

\begin{Example} {\em Figure~\ref{dual} and Figure~\ref{sdual} show the dual of a poset and a self-dual poset.}

\begin{figure}[hbt]
\begin{center}
\psset{unit=0.6cm}
\begin{pspicture}(3,-1)(4,5)

\rput(0,0){$\bullet$}
\rput(1,2){$\bullet$}
\rput(2,0){$\bullet$}
\rput(1,4){$\bullet$}
\rput(5,0){$\bullet$}
\rput(5,2){$\bullet$}
\rput(4,4){$\bullet$}
\rput(6,4){$\bullet$}

\psline(0,0)(1,2)
\psline(2,0)(1,2)
\psline(1,2)(1,4)
\psline(5,0)(5,2)
\psline(5,2)(4,4)
\psline(5,2)(6,4)

\rput(1,-0.7){$P$}
\rput(5,-0.7){$P^\ast$}
\end{pspicture}
\end{center}
\caption{The dual poset}\label{dual}
\end{figure}
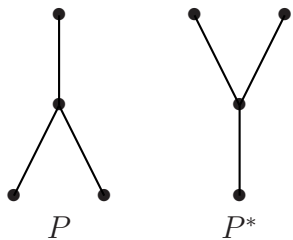

\begin{figure}[hbt]
\begin{center}
\psset{unit=0.6cm}
\begin{pspicture}(3,-1)(4,5)
\rput(0,0){
\rput(0,0){$\bullet$}
\rput(-2,2){$\bullet$}
\rput(2,2){$\bullet$}
\rput(0,4){$\bullet$}

\pspolygon(0,0)(-2,2)(0,4)(2,2)
\rput(0,-0.7){$P$}
}
\rput(5,0){
\rput(0,0){$\bullet$}
\rput(-2,2){$\bullet$}
\rput(2,2){$\bullet$}
\rput(0,4){$\bullet$}

\pspolygon(0,0)(-2,2)(0,4)(2,2)
\rput(0,-0.7){$P^\ast$}
}
\end{pspicture}
\end{center}
\caption{Self-dual poset}\label{sdual}
\end{figure}
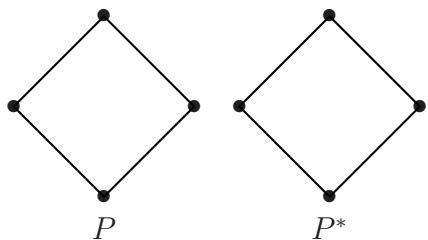
\end{Example}

\subsubsection{Lattices} Let $P$ be a poset and $x,y\in P.$ An {\em upper bound} of $x,y$ is an element $z\in P$ such that 
$z\geq x$ and $z\geq y$. If the set $\{z\in P: z \text{ is an upper bound of } x \text{ and }y\}$ has a least element, this is obviously unique,  is called the {\em join}  of $x$ and $y$, and it is denoted $x\vee y.$ By duality, one defines 
the {\em meet} $x\wedge y$ of two elements $x,y$ in a poset. 

\begin{Definition}{\em 
A lattice $L$ is a poset with the property that for any $x,y\in L,$ $x\vee y$ and $x\wedge y$ exist.
}
\end{Definition}

It is easily seen that if $L$ and $L^\prime$ are lattices, then so are $L^\ast,$ $L\oplus L^\prime$, and $L\times L^\prime.$

\begin{Example}{\em 
$B_n$ and $D_n$ are lattices.
}
\end{Example}
 All the lattices considered in these lectures are finite. Unless otherwise stated, by a lattice we mean a finite lattice. 
Clearly, any lattice has a minimum and a maximum. 

A sublattice of $L$ is a subposet $L^\prime$ of $L$ with the property that for any $x,y\in L^\prime,$ $x\vee y\in L^\prime$ 
and $x\wedge y\in L^\prime.$

\begin{Proposition}\cite[Chapter 3]{Sta1}
Let $L$ be a lattice. The following conditions are equivalent:
\begin{itemize}
	\item [(i)] $L$ is graded and its rank function satisfies $\rho(x)+\rho(y)\geq \rho(x\wedge y)+\rho(x\vee y)$ for all 
	$x,y\in L.$
	\item [(ii)] If $x$ and $y$ cover $x\wedge y$, then $x\vee y$ covers $x$ and $y.$
\end{itemize}
\end{Proposition}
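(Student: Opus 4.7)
The plan is to treat the two implications separately; the direction (i) $\Rightarrow$ (ii) is a one-line calculation with the rank function, whereas (ii) $\Rightarrow$ (i) proceeds in two stages — the Jordan-Dedekind chain condition first, then the rank inequality.

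For (i) $\Rightarrow$ (ii), I would observe that if $x,y$ both cover $x \wedge y$ they must be incomparable (if $x \leq y$ then $x \wedge y = x$, contradicting $x \gtrdot x \wedge y$), so $x \vee y > x$ strictly and gradedness gives $\rho(x \vee y) \geq \rho(x) + 1 = \rho(x \wedge y) + 2$. The inequality in (i) provides the matching upper bound $\rho(x \vee y) \leq \rho(x) + \rho(y) - \rho(x \wedge y) = \rho(x \wedge y) + 2$, so $\rho(x \vee y) = \rho(x) + 1 = \rho(y) + 1$, and in a graded lattice this forces $x \vee y$ to cover both $x$ and $y$.

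For (ii) $\Rightarrow$ (i), I would first establish the Jordan-Dedekind chain condition on every interval $[a,b]$, by induction on the length of a fixed maximal chain. Given a second maximal chain of $[a,b]$, the only nontrivial case is when the two chains start with distinct covers $x_1, y_1$ of $a$; then $x_1 \wedge y_1 = a$, and (ii) produces $z := x_1 \vee y_1$ covering both $x_1$ and $y_1$. Fixing a maximal chain from $z$ to $b$ and prefixing the covers $x_1 \lessdot z$ and $y_1 \lessdot z$ to it gives maximal chains in $[x_1,b]$ and $[y_1,b]$; the inductive hypothesis in these strictly shorter intervals matches all the lengths, yielding gradedness and a well-defined rank function $\rho$.

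The rank inequality then follows by induction on $\rho(x)-\rho(x \wedge y)$: in the step, pick $x' \lessdot x$ on a saturated chain above $x \wedge y$; this gives $x' \wedge y = x \wedge y$, and the inductive hypothesis reduces the conclusion to the sub-claim $\rho(x \vee y) \leq \rho(x' \vee y) + 1$. This sub-claim — classical \emph{cover preservation under join} in semimodular lattices — is itself proved by a nested induction on $\rho(x' \vee y)-\rho(x')$: if some cover $z \gtrdot x'$ with $z \leq x' \vee y$ equals $x$ then $x \vee y = x' \vee y$ and we are done, and otherwise (ii) applied to $x$ and $z$ (both covers of $x'$) yields $x \vee z$ covering $x$ and $z$, while $z \vee y = x' \vee y$, so the inductive hypothesis for the cover $z \lessdot x \vee z$ (whose measure $\rho(\cdot \vee y)-\rho(\cdot)$ has strictly decreased) delivers $\rho(x \vee y) \leq \rho(x' \vee y) + 1$.

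The main obstacle is precisely this sub-claim: a one-shot diamond application of (ii) to $x$ and $x' \vee y$ fails because $x' \vee y$ need not cover $x'$. The nested induction sidesteps this by repeatedly lifting the cover $x' \lessdot x$ along a chain inside $[x', x' \vee y]$ until the situation trivialises. Once the sub-claim is in hand, the main inequality is a short inductive bookkeeping, completing both implications.
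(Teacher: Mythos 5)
The paper does not prove this proposition at all: it is quoted as background from \cite[Chapter 3]{Sta1}, so there is no in-paper argument to compare against. Your proof is correct and is essentially the classical one from the cited reference: the rank computation for (i)$\Rightarrow$(ii), the Jordan--Dedekind chain condition via the join of the two first covers for gradedness, and the ``covers are preserved under join'' lemma (your sub-claim, with the induction on $\rho(x'\vee y)-\rho(x')$ and the observation that $(x\vee z)\vee y=x\vee y$ and $z\vee y=x'\vee y$) feeding the final induction on $\rho(x)-\rho(x\wedge y)$. The only place to be slightly more careful in a full write-up is the Jordan--Dedekind step: phrase it as strong induction on $n$ in the statement ``if $[a,b]$ has a maximal chain of length $n$ then all its maximal chains have length $n$,'' so that the hypothesis legitimately applies first to $[x_1,b]$ (giving the length of the chain through $z=x_1\vee y_1$) and only then to $[y_1,b]$.
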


\begin{Definition}{\em 
A lattice $L$ is called {\em modular} if it is graded and its rank function satisfies 
$\rho(x)+\rho(y)= \rho(x\wedge y)+\rho(x\vee y)$ for all 
	$x,y\in L.$
}
\end{Definition}

The following proposition characterizes the modular lattices. For the proof one may consult \cite{B}.

\begin{Proposition}\label{modular}
Let $L$ be a lattice. The following conditions are equivalent:
\begin{itemize}
	\item [(i)] $L$ is a modular lattice;
	\item [(ii)] For all $x,y,z\in L$ such that $x\leq z,$ we have $x\vee(y\wedge z)=(x\vee y)\wedge z.$
	\item [(iii)] $L$ has no sublattice isomorphic to the pentagon lattice of Figure~\ref{pentagon}.
\end{itemize}
\end{Proposition}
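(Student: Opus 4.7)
My plan is to prove the cyclic chain $(i) \Rightarrow (ii) \Rightarrow (iii) \Rightarrow (i)$, handling the last step via the intermediate equivalence $(iii) \Rightarrow (ii) \Rightarrow (i)$; in effect it suffices to verify $(i) \Rightarrow (ii)$, $(ii) \Rightarrow (iii)$, $(iii) \Rightarrow (ii)$, and $(ii) \Rightarrow (i)$. The first two are formal; the third is the heart of the matter and is where I expect the main obstacle.

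For $(i) \Rightarrow (ii)$, I would first record that the inequality $x \vee (y \wedge z) \leq (x \vee y) \wedge z$ holds in any lattice when $x \leq z$. Since $L$ is graded by $(i)$, it suffices to show both sides have the same rank, as in a graded lattice $a \leq b$ with $\rho(a)=\rho(b)$ forces $a=b$. Applying the modular rank identity of $(i)$ to the pair $(x, y \wedge z)$, together with the simplification $x \wedge (y \wedge z) = x \wedge y$ coming from $x \leq z$, and to the pair $(x \vee y, z)$, where $(x \vee y) \vee z = y \vee z$, a short calculation yields
\[
\rho(x \vee (y \wedge z)) = \rho(x) + \rho(y \wedge z) - \rho(x \wedge y) = \rho((x \vee y) \wedge z),
\]
which gives $(ii)$. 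For $(ii) \Rightarrow (iii)$, suppose the pentagon embedded as a sublattice with $a < c$ and a third element $b$ satisfying $a \vee b = c \vee b$ and $a \wedge b = c \wedge b$; then $(ii)$ with $x=a$, $y=b$, $z=c$ gives $a = a \vee (b \wedge c) = (a \vee b) \wedge c = (c \vee b) \wedge c = c$, contradicting $a < c$.

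The main obstacle is $(iii) \Rightarrow (ii)$. Assuming $(ii)$ fails, pick $x \leq z$ and $y$ with $u := x \vee (y \wedge z) \lneq (x \vee y) \wedge z =: v$; I claim that the five elements $y \wedge z,\ u,\ v,\ y,\ x \vee y$ form a pentagon sublattice. The computations $y \wedge u = y \wedge v = y \wedge z$ and $y \vee u = y \vee v = x \vee y$ are direct (for the second one uses $x \leq v$, which follows from $x \leq x \vee y$ and $x \leq z$). The incomparability of $y$ with $u$ and with $v$ has to be verified: if, say, $y \leq u$ then $x \leq y$ would follow, which forces $u = y \wedge z = v$, contradicting $u \lneq v$; the other cases are similar. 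One must then check that the five elements are pairwise distinct, for instance $u \neq y \wedge z$ (else $u \leq y$, contradicting incomparability) and $v \neq x \vee y$ (else $y \leq z$, forcing $u = v$). This is the delicate bookkeeping step, but it closes the argument by producing the forbidden sublattice.

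Finally, for $(ii) \Rightarrow (i)$, I would invoke the classical modular lattice theory: under $(ii)$ the maps $t \mapsto t \vee y$ and $s \mapsto s \wedge x$ are mutually inverse isomorphisms between the intervals $[x \wedge y, x]$ and $[y, x \vee y]$, which in particular preserve saturated chains. A Jordan--H\"older argument then shows that all maximal chains in any interval have the same length, so $L$ is graded and admits a well-defined rank function $\rho$. The interval isomorphism above gives a length-preserving bijection of maximal chains, yielding $\rho(x) - \rho(x \wedge y) = \rho(x \vee y) - \rho(y)$, which is precisely the modular rank identity of $(i)$.
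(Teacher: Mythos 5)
The paper never proves this proposition; it simply refers the reader to Birkhoff \cite{B}, so there is no in-text argument to compare yours against. Your proposal is the classical (essentially Dedekind's) argument, and its architecture is sound: the rank computation for (i) $\Rightarrow$ (ii) does close up once you apply the identity of (i) to the pairs $(x,y)$ and $(y,z)$; the specialization $x=a$, $y=b$, $z=c$ disposes of (ii) $\Rightarrow$ (iii); the pentagon $\{y\wedge z,\; u,\; v,\; y,\; x\vee y\}$ is the right witness for (iii) $\Rightarrow$ (ii); and the diamond isomorphism between $[x\wedge y,x]$ and $[y,x\vee y]$ plus Jordan--H\"older gives (ii) $\Rightarrow$ (i). One local slip should be fixed: in the incomparability check you assert that $y\leq u$ implies $x\leq y$, which is not a valid inference (and not the case you describe). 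The two essential cases are these. If $u\leq y$, then $x\leq u\leq y$, so $v=(x\vee y)\wedge z=y\wedge z\leq u$, forcing $u=v$. If $y\leq v$, then since also $x\leq v$ one gets $x\vee y\leq v\leq x\vee y$, so $v=x\vee y\leq z$, whence $y\leq z$, $y\wedge z=y$, and again $u=x\vee y=v$. Because $u\leq v$, these two cases subsume $y\leq u$ and $v\leq y$ as well. Finally, your (ii) $\Rightarrow$ (i) imports the Jordan--H\"older chain theorem for lattices satisfying the modular law rather than proving it; that is a genuine ingredient being assumed, but it is precisely the content of the reference the paper itself falls back on, so the level of detail is appropriate for the statement as it appears here.
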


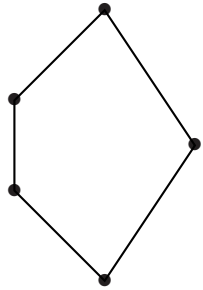
\begin{figure}[hbt]
\begin{center}
\psset{unit=0.6cm}
\begin{pspicture}(-2,0)(4,6)

\rput(0,0){$\bullet$}
\rput(-2,2){$\bullet$}
\rput(-2,4){$\bullet$}
\rput(0,6){$\bullet$}
\rput(2,3){$\bullet$}
\pspolygon(0,0)(-2,2)(-2,4)(0,6)(2,3)

\end{pspicture}
\end{center}
\caption{Pentagon}\label{pentagon}
\end{figure}

\subsubsection{Distributive lattices} In these lectures we are mainly interested in distributive lattices.

\begin{Definition}\label{distrib}{\em
Let $L$ be a lattice. $L$ is called {\em distributive} if satisfies one of the equivalent conditions:
\begin{itemize}
	\item [(i)] for any $x,y,z\in L,$ $x\vee(y\wedge z)=(x\vee y)\wedge(x\vee z)$;
	\item [(ii)] for any $x,y,z\in L,$ $x\wedge(y\vee z)=(x\wedge y)\vee(x\wedge z).$
\end{itemize}
}
\end{Definition} 

The lattices $B_n$ and $D_n$ are distributive while the lattices displayed in Figure~\ref{pentagon} and Figure~\ref{nondis} are not.

\begin{figure}[hbt]
\begin{center}
\psset{unit=0.6cm}
\begin{pspicture}(-2,0)(4,6)
\rput(-2,0){
\rput(0,0){$\bullet$}
\rput(-2,2){$\bullet$}
\rput(-2,4){$\bullet$}
\rput(0,6){$\bullet$}
\rput(2,3){$\bullet$}
\rput(0,3){$\bullet$}
\pspolygon(0,0)(-2,2)(-2,4)(0,6)(2,3)
\psline(0,0)(0,3)
\psline(0,3)(-2,4)
}
\rput(4,0){
\rput(0,0){$\bullet$}
\rput(-2,2){$\bullet$}
\rput(0,4){$\bullet$}
\rput(0,2){$\bullet$}
\rput(2,2){$\bullet$}
\pspolygon(0,0)(-2,2)(0,4)(2,2)
\psline(0,0)(0,4)
}
\end{pspicture}
\end{center}
\caption{Non-distributive lattices}\label{nondis}
\end{figure}
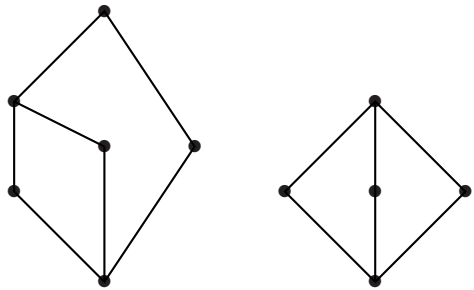

A famous theorem of Birkhoff \cite{B} states that every distributive lattice $L$ is the lattice of the order ideals of a certain suposet $P$ of $L.$

A subset $\alpha$ of a poset $P$ is called an {\em order ideal} or {\em poset ideal} if it satisfies the following condition: 
for any $x\in \alpha$ and $y\in P,$ if $y\leq x,$ then $y\in \alpha.$ The set of all order ideals of $P$ is denoted $\MI(P).$
The union and intersection of two order ideals are obviously order ideals. Therefore, $\MI(P)$ is a distributive lattice with the union and intersection. 

Given a lattice $L,$ an element $x\in L$ is called {\em join-irreducible} if $x\neq \min L$ and whenever $x=y\vee z$ for some 
$y,z\in L$, we have either $x=y$ or $x=z.$

\begin{Theorem}[Birkhoff]\label{Birkoff}
Let $L$ be a distributive lattice and $P$ its subposet of join-irreducible elements. Then $L$ is isomorphic to $\MI(P).$
\end{Theorem}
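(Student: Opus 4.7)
The plan is to construct an explicit order-preserving bijection $\varphi:L\to\MI(P)$ whose inverse is also order-preserving. Define $\varphi(x)=\{p\in P:p\leq x\}$ for each $x\in L$. This set is clearly an order ideal of $P$, and $x\leq y$ trivially implies $\varphi(x)\subseteq\varphi(y)$, so $\varphi$ is a well-defined morphism of posets. The candidate for the inverse is $\psi:\MI(P)\to L$ given by $\psi(\alpha)=\bigvee_{p\in\alpha}p$ (with the convention $\psi(\emptyset)=\min L$), and it is also visibly order-preserving.

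The heart of the proof is two claims. The first is a purely lattice-theoretic decomposition: every $x\in L$ equals the join of the join-irreducibles lying below it, i.e.\ $x=\bigvee\varphi(x)$. I would prove this by induction on the height of $x$ in $L$: if $x$ is itself join-irreducible (or the minimum), the claim is immediate; otherwise write $x=y\vee z$ with $y,z<x$, apply the inductive hypothesis to $y$ and $z$, and observe that the join-irreducibles $\leq y$ or $\leq z$ are join-irreducibles $\leq x$. The second claim uses distributivity essentially: if $p\in P$ is join-irreducible and $p\leq y_1\vee\cdots\vee y_k$, then $p\leq y_i$ for some $i$. Indeed, distributivity gives
\[
p = p\wedge(y_1\vee\cdots\vee y_k) = (p\wedge y_1)\vee\cdots\vee(p\wedge y_k),
\]
and since $p$ is join-irreducible, $p=p\wedge y_i$ for some $i$.

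With these two facts in hand, the bijection follows cleanly. Injectivity of $\varphi$ is immediate from the first claim: if $\varphi(x)=\varphi(y)$, then $x=\bigvee\varphi(x)=\bigvee\varphi(y)=y$. For surjectivity, take any $\alpha\in\MI(P)$ and put $x=\psi(\alpha)$. The inclusion $\alpha\subseteq\varphi(x)$ is obvious. Conversely, if $p\in\varphi(x)$, then $p$ is a join-irreducible with $p\leq\bigvee_{q\in\alpha}q$, so by the second claim $p\leq q$ for some $q\in\alpha$, and because $\alpha$ is an order ideal, $p\in\alpha$. Finally, the equivalence $\varphi(x)\subseteq\varphi(y)\iff x\leq y$ comes from applying $\psi$ to both sides and invoking $\psi\circ\varphi=\id_L$, showing that $\varphi^{-1}$ preserves order.

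The main obstacle is the second claim, where distributivity is indispensable; without it the map $\varphi$ may fail to be surjective (as happens for the pentagon and the diamond of Figure~\ref{nondis}). Everything else is essentially formal: the decomposition lemma needs only the finiteness of $L$, and the compatibility of $\varphi$ and $\psi$ with the order follows once we know they are mutually inverse.
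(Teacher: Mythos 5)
Your proof is correct and complete; note that the paper itself gives no proof of this statement --- it records Birkhoff's theorem as a classical fact with a reference to \cite{B} and merely illustrates it with a figure --- so there is no in-paper argument to compare against. What you wrote is the standard proof of the fundamental theorem for finite distributive lattices, and the two claims you isolate are exactly the right ones: the join-decomposition $x=\bigvee\varphi(x)$, which holds in any finite lattice and gives injectivity, and the primeness of join-irreducibles under finite joins, which is where distributivity enters and which gives surjectivity. Two small points you may want to make explicit: the paper's definition of join-irreducible is stated for binary joins, so the step ``$p=(p\wedge y_1)\vee\cdots\vee(p\wedge y_k)$ implies $p=p\wedge y_i$ for some $i$'' needs the one-line induction extending irreducibility to $k$-fold joins (and the case $k=0$ is vacuous since $p\neq\min L$); and in the induction for the first claim one should observe that $\varphi(y)\cup\varphi(z)\subseteq\varphi(x)$ gives $x\leq\bigvee\varphi(x)$, while $\bigvee\varphi(x)\leq x$ is automatic, so equality follows even though $\varphi(y)\cup\varphi(z)$ need not exhaust $\varphi(x)$. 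Your closing remark that surjectivity fails for the pentagon and the diamond is consistent with the paper's discussion of non-distributive lattices.
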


In the following figure we illustrate Birkhoff's theorem.

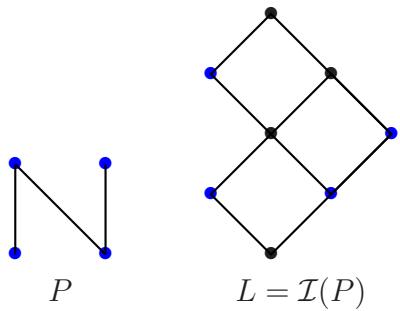
\begin{figure}[hbt]
\begin{center}
\psset{unit=0.6cm}
\begin{pspicture}(-2,-1)(4,6)
\rput(-3,0){
\rput(0,0){\color{blue}$\bullet$}
\rput(0,2){\color{blue}$\bullet$}
\rput(2,0){\color{blue}$\bullet$}
\rput(2,2){\color{blue}$\bullet$}
\psline(0,0)(0,2)
\psline(0,2)(2,0)
\psline(2,0)(2,2)
\rput(1,-0.8){$P$}
}
\psset{unit=0.4cm}
\rput(4,0){
\rput(0,0){$\bullet$}
\rput(-2,2){\color{blue}$\bullet$}
\rput(0,4){$\bullet$}
\rput(2,2){\color{blue}$\bullet$}
\rput(4,4){\color{blue}$\bullet$}
\rput(-2,6){\color{blue}$\bullet$}
\rput(2,6){$\bullet$}
\rput(0,8){$\bullet$}
\pspolygon(0,0)(-2,2)(2,6)(4,4)
\pspolygon(2,2)(-2,6)(0,8)(4,4)
\rput(1,-1.3){$L=\MI(P)$}
}
\end{pspicture}
\end{center}
\caption{Birkhoff's theorem}\label{Birhoff_fig}
\end{figure}

\subsection{Preliminaries of commutative algebra}
\label{comalgebra}

In this subsection we review basic facts about minimal free resolutions and canonical modules which will be needed in what follows. 
 We refer the reader to the book of Stanley \cite{Sta2}, the survey of Herzog \cite{H}, and Chapter $4$ in \cite{EH} for more information.

\subsubsection{Minimal graded free resolutions}
Let $S=K[x_1,\ldots,x_n]$ be the polynomial ring in $n$ variables over a field $K.$ The ring $S$ is graded with the usual grading $S=\oplus_{d\geq 0} S_d$ where $S_d$ is the $K$--vector space generated by all the monomials of $S$ of degree $d.$
The ideal $\mm=S_+=\oplus_{d>0}S_d=(x_1,\ldots,x_n)$ is the unique graded maximal ideal of $S.$

A {\em graded $S$--module}  $M$ has a decomposition $M=\oplus_{n\in \ZZ}M_n$ as a vector space over $K$ with the property that $S_d M_n\subset M_{n+d}$ for all $d,n.$ Most often, we will work with graded modules of the form $S/I$ where $I$ is  a 
graded ideal of $S.$ A graded $K$--algebra of the form $R=S/I$ where $I$ is a graded ideal of $S$ is called a 
{\em standard graded algebra}.

All the graded $S$--modules considered in this paper are finitely generated. Obviously, if $M$ is a finitely generated 
graded $S$--module, then there exists $m\in \ZZ$ such that $M_n=0$ for all $n<m.$

Let $M$ be a graded $S$--module. The function $H(M,-):\ZZ\to \NN$ given by $H(M,n)=\dim_K M_n,$ for all $n,$ is called 
the {\em Hilbert function} of $M.$ The generating series of this function, $H_M(t)=\sum_{n\in \ZZ}H(M,n)t^n,$ is called the 
{\em Hilbert series} of $M.$ For example, the Hilbert function of $S$ is $H(S,d)={n+d-1\choose n}$ and the Hilbert series is 
$H_S(t)=1/(1-t)^n.$

If $M$ is a graded $S$--module and $a$ is an integer, then $M(a)$ is the graded module whose degree $n$ component is 
$(M(a))_n=M_{a+n}$ for all $n.$ By the definition of the Hilbert series, it obviously follows that $H_{M(a)}(t)=t^{-a}H_M(t).$

\begin{Definition}{\em
A {\em graded free resolution}  of the finitely generated graded $S$--module $M$ is an exact sequence 
\[
\\F_{\bullet}:\hspace{0.7cm} 0\to F_p\stackrel{\varphi_p}{\rightarrow}F_{p-1}\stackrel{\varphi_{p-1}}{\rightarrow}
\cdots \stackrel{\varphi_2}{\rightarrow} F_1\stackrel{\varphi_1}{\rightarrow}F_0\stackrel{\varphi_0}{\rightarrow} M\to 0
\] where $F_i$ are free $S$--modules of finite rank and the maps $\varphi_i: F_i\to F_{i-1}$ preserve the degrees, that is, they are graded maps.
}
\end{Definition}

The modules $F_i$ are of the form $F_i=\oplus_{j\in \ZZ} S(-j)^{b_{ij}}$ for all $i.$ 

The resolution $\FF_{\bullet}$ is called {\em minimal} if $\Im \varphi_i\subset \mm F_{i-1}$ for $i\geq 1.$ This is equivalent to saying that all the matrices representing the maps $\varphi_i$ in the resolution have all the entries in $\mm.$ By the Hilbert's Syzygy Theorem (see, for example, \cite[Theorem 4.18]{EH}) it follows that $p\leq n$ if $\FF_{\bullet}$ is minimal.

Any two minimal graded free resolutions of $M$ are isomorphic; see \cite[Theorem 4.25]{EH}. Hence, if $\FF_{\bullet}$
 is minimal and $F_i=\oplus_{j\in \ZZ} S(-j)^{\beta_{ij}}$, then the numbers $\beta_{ij}=\beta_{ij}(M)$ are called 
{\em the graded Betti numbers} of $M.$ The numbers $\beta_i=\beta_i(M)=\sum_j \beta_{ij}$ are called {\em the total Betti numbers} of $M.$ We have the following formulas for $\beta_i(M)$ and $\beta_{ij}(M):$
\[
\beta_i(M)=\dim_K\Tor_i^S(M,K) \text { and } \beta_{ij}(M)=\dim_K\Tor_i^S(M,K)_j.
\]

The following data can be read from the minimal graded free resolution of $M$. The {\em projective dimension} of $M$  is defined as
\[
\projdim M=\max\{i: \beta_{ij}\neq 0 \text{ for some }j\}.
\]  

The {\em regularity} of $M$ is given by 
\[
\reg M=\max\{j-i: \beta_{ij}\neq 0\}.
\]

The graded Betti numbers of $M$ are usually  displayed in the so-called {\em Betti diagram} of $M;$ see Figure~\ref{Betti diagram}.

\begin{figure}[hbt]
\begin{center}
\psset{unit=1cm}
\begin{pspicture}(-0.8,1)(6,5)
 \psline(0.5,1)(0.5,5)
 \psline(0,4.5)(5,4.5)
 \psline[linestyle=dashed](0.5,1.5)(2.5,1.5)
 \psline[linestyle=dashed](2.5,1.5)(2.5,2)
 \psline[linestyle=dashed](2.5,2)(3.5,2)
 \psline[linestyle=dashed](3.5,2)(3.5,3.5)
 \psline[linestyle=dashed](3.5,3.5)(4,3.5)
 \psline[linestyle=dashed](4,3.5)(4,4.5)
 \psline[linestyle=dotted](0.5,3)(2.5,3)
 \psline[linestyle=dotted](2.5,3)(2.5,4.5)
 \rput(0.3,3){{$j$}}
 \rput(2.5,4.8){{$i$}}
 \rput(2.7, 2.7){{$\beta_{ii+j}$}}
 \rput(2.5,1.5){{$\bullet$}}
 \rput(3.5,2){{$\bullet$}}
 \rput(4,3.5){{$\bullet$}}
 \rput(0, 1.5){$\reg$}
 \rput(4,4.8){$\projdim$}
\end{pspicture}
\end{center}
\caption{Betti diagram}\label{Betti diagram}
\end{figure}
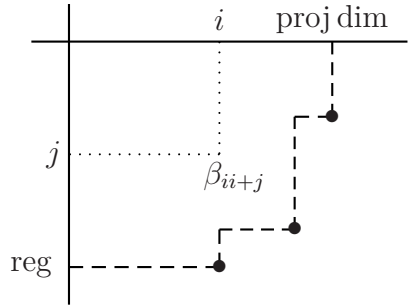

\begin{Definition}{\em 
Let $M$ be a finitely generated graded $S$--module and $\{g_1,\ldots,g_m\}$ a minimal system of homogeneous generators of $M.$
The module $M$ has a {\em $d$--linear resolution} if $\deg g_i=d$ for $1\leq i\leq m$ and $\beta_{ij}(M)=0$ for $j\neq i+d.$ In other words, 
we have $\beta_i(M)=\beta_{i i+d}(M)$ for all $i.$
}
\end{Definition}

Hence, $M$ has a $d$--linear resolution if the minimal graded free resolution is of the form:
\[
0\to S(-d-p)^{\beta_p}\to \cdots \to S(-d-1)^{\beta_1}\to S(-d)^{\beta_0}\to M\to 0.
\]
This is equivalent to saying that all the minimal homogeneous generators have degree $d$ and all the maps in the minimal graded free resolution have linear form entries.

\begin{Definition}{\em 
Let $M$ be a finitely generated graded $S$--module and $\{g_1,\ldots,g_m\}$ a minimal system of homogeneous generators of $M.$
The module $M$ has a {\em pure resolution} if its minimal graded free resolution is of the form
\[
0\to S(-d_p)^{\beta_p}\to \cdots \to S(-d_1)^{\beta_1}\to S(-d_0)^{\beta_0}\to M\to 0.
\] for some integers $0<d_0<d_1<\cdots<d_p.$
}
\end{Definition}

\subsubsection{Cohen-Macaulay modules and canonical modules} 
\label{subsectcanonical}

\begin{Definition}{\em 
Let $M$ be a graded finitely generated $S$--module. A sequence of homogeneous elements $\theta_1,\ldots, \theta_r\in \mm$ is called 
and {\em $M$--sequence} if $\theta_i$ is regular on $M/(\theta_1,\ldots,\theta_{i-1})M$ for all $i$ which means that, for any $i,$ 
the multiplication map $\theta_i:M/(\theta_1,\ldots,\theta_{i-1})M\to M/(\theta_1,\ldots,\theta_{i-1})M$ is injective.
}
\end{Definition}

The length of the longest $M$--sequence of homogeneous elements is called the {\em depth} of $M.$ The Auslander-Buchsbaum Theorem 
\cite[Theorem 1.3.3]{BHbook} states that $$\depth M=n-\projdim M.$$ 

In general, one has $\depth M \leq \dim M;$ see 
\cite[Proposition 1.2.12]{BHbook}. The equality case is very important in commutative algebra.  A finitely generated graded $S$--module 
$M$ is called {\em Cohen-Macaulay} if $\depth M=\dim M.$ 
 
Let $R=S/I$ be a standard graded Cohen-Macaulay algebra of dimension  $d$ with the minimal graded free $S$--resolution 
\[
\FF_{\bullet}: \hspace{0.7cm} 0\to F_{n-d}\stackrel{\varphi_{n-d}}{\rightarrow} F_{n-d-1}\to
\cdots \to F_1\stackrel{\varphi_{1}}{\rightarrow} F_0\stackrel{\varphi_{0}}{\rightarrow} R\to 0.
\]

A finite graded $S$--module $\omega_R$ is the {\em canonical module} of $R$ if 
\[
\Ext_S^i(S/\mm,\omega_R)\cong\left\{
\begin{array}{ll}
	0,& i\neq d\\
	S/\mm, & i=d
\end{array}.\right.
\]

\begin{Example}{\em 
The Koszul complex of $x_1,\ldots,x_n$ gives the minimal graded free resolution of $K=S/\mm$. The last non-zero module in the Koszul complex is $F_n=S(-n).$ Then 
$\Ext_S^i(S/\mm,S)=0$ for $i\neq n$ and $\Ext_S^n(S/\mm,S)=(S/\mm)(n).$ Hence, the canonical module of $S$ is $\omega_S=S(-n).$
}
\end{Example}

Let $R=S/I$ be a Cohen-Macaulay standard graded $K$--algebra and $\FF_{\bullet}$  its minimal graded free resolution. Then the sequence 
\[
0\to F_0^\ast \stackrel{\varphi_1^\ast}{\rightarrow}F_1^\ast\to\cdots \stackrel{\varphi_{n-d}^\ast}{\rightarrow} F_{n-d}^\ast \to \omega_R\to 0
\]
is the minimal graded free resolution of $\omega_R.$ Here $F_i^\ast$ denotes the dual of $F_i$ and $\varphi_i^\ast$ the dual of $\varphi_i$ for all $i.$ 
Therefore, we have
\[
\beta_i(\omega_R)=\beta_{n-d-i} (R) \text{ for all }i.
\] In particular, $\beta_{n-d}(R)$ is equal to the minimal  number of homogeneous generators of $\omega_R.$ The Betti number $\beta_{n-d}(R)$ is called the 
{\em type} of $R$ and it is denoted $\type(R).$

\begin{Definition}{\em 
Let $R=S/I$ be a standard graded $K$--algebra. The algebra $R$ is called {\em Gorenstein} if it is Cohen-Macaulay of type $1.$ 
}
\end{Definition}
Hence, a Cohen-Macaulay standard graded $K$--algebra $R$ is Gorenstein if and only if $\omega_R\cong R(a)$ for some integer $a.$
The minimal free resolution of a Gorenstein algebra $R=S/I$ is self-dual. In particular, $\beta_i^S(R)=\beta_{n-d-i}^S(R).$

\begin{Definition}{\em
Let $R=S/I$ be a standard graded Cohen-Macaulay $K$--algebra. The number
\[
a(R)=-\min\{i: (\omega_R)_i\neq 0\} 
\] is called the {\em $a$--invariant} of $R.$
}
\end{Definition}

\begin{Proposition}\cite[Corollary 3.6.14]{BHbook}\label{cor3.6.14}
Let $R=S/I$ be a standard graded Cohen-Macaulay $K$--algebra with canonical module $\omega_R$ and $\yb=y_1,\ldots,y_m$ an $R$--sequence
of homogeneous elements with $\deg y_i=a_i$ for $1\leq i\leq m.$ Then 
\[
\omega_{R/\yb R}\cong (\omega_R/\yb \omega_R)(\sum_{i=1}^m a_i).
\] In particular, 
$
a(R/\yb R)=a(R)+\sum_{i=1}^m a_i.
$
\end{Proposition}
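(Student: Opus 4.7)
The plan is to induct on $m$, reducing to the case of a single homogeneous nonzerodivisor.

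For the base case $m=1$, write $y=y_1$, $a=a_1$, and set $c=n-d$. Since $S$ is regular and $R$ is Cohen-Macaulay, the Auslander-Buchsbaum formula gives $\pd_S R = c$ and $\Ext_S^i(R,S) = 0$ for $i \neq c$; similarly $R/yR$ is Cohen-Macaulay of dimension $d-1$, so $\Ext_S^i(R/yR,S) = 0$ for $i \neq c+1$. I would apply $\Hom_S(-,\omega_S)$ with $\omega_S = S(-n)$ to
\[
0 \to R(-a) \xrightarrow{\cdot y} R \to R/yR \to 0
\]
and use the graded identification $\Ext_S^c(R(-a),\omega_S) \cong \omega_R(a)$, under which the map induced by multiplication by $y$ becomes the multiplication $\omega_R \to \omega_R(a)$. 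By the vanishing above, the long exact sequence of $\Ext$ collapses to
\[
0 \to \omega_R \xrightarrow{\cdot y} \omega_R(a) \to \omega_{R/yR} \to 0,
\]
yielding $\omega_{R/yR} \cong (\omega_R/y\omega_R)(a)$, which is the $m=1$ case.

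For the inductive step, set $R' = R/(y_1,\ldots,y_{m-1})R$. Because $R$ is Cohen-Macaulay, every $R$-sequence is also an $\omega_R$-sequence (see \cite[Theorem 3.3.5]{BHbook}), so I can apply the inductive hypothesis to the sequence $y_1,\ldots,y_{m-1}$ on $R$ to obtain
\[
\omega_{R'} \cong \bigl(\omega_R/(y_1,\ldots,y_{m-1})\omega_R\bigr)\bigl(\textstyle\sum_{i=1}^{m-1} a_i\bigr).
\]
Then $y_m$ is a homogeneous nonzerodivisor of degree $a_m$ on the Cohen-Macaulay ring $R'$, and the base case applied to $(R',y_m)$ gives the desired isomorphism for the full sequence $\yb$.

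For the $a$-invariant statement, since each $y_i$ has positive degree, $y_i\omega_R$ lies in strictly higher degrees than the lowest nonzero component of $\omega_R$, so the minimum nonzero degree of $\omega_R/\yb\omega_R$ coincides with that of $\omega_R$, namely $-a(R)$; shifting by $\sum a_i$ then gives $a(R/\yb R) = a(R) + \sum_{i=1}^m a_i$. The only delicate point is the graded bookkeeping in the base case, where one must carefully track degree shifts when applying $\Hom_S(-,\omega_S)$ to the shifted module $R(-a)$; beyond that, everything is routine homological algebra.
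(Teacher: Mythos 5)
The paper does not prove this proposition at all --- it is quoted directly from \cite[Corollary 3.6.14]{BHbook} --- so there is no internal argument to compare against. Your proof is correct and is essentially the standard one behind the cited result: compute $\omega_R$ as $\Ext_S^{n-d}(R,\omega_S)$, dualize the short exact sequence $0\to R(-a)\xrightarrow{\;y\;}R\to R/yR\to 0$, use the Cohen--Macaulay vanishing to collapse the long exact sequence, and induct; the degree bookkeeping and the deduction of the $a$-invariant formula are both handled correctly (the positivity of the $\deg y_i$, which you need for the last step, is guaranteed because regular-sequence elements lie in $\mm$). One small imprecision: Auslander--Buchsbaum only gives $\pd_S R=c$ and hence $\Ext_S^i(R,S)=0$ for $i>c$; the vanishing for $i<c$ comes instead from $\grade(\Ann_S R,S)=\height(\Ann_S R)=c$, which holds because $S$ is Cohen--Macaulay and $R$ is a Cohen--Macaulay quotient. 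This is standard and does not affect the validity of the argument.
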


\subsection{Hibi rings and ideals}
\label{subsectHibi}

In this subsection we describe a class of rings and binomial ideals which were introduced by Hibi in \cite{Hibi}. They are associated with finite distributive lattices. 

Let $L$ be a distributive lattice and $P=\{p_1,\ldots,p_n\}$ its set of join-irreducible elements. Thus, $L=\MI(P).$ Let $K$ be a field 
and $R=K[t,x_1,\ldots,x_n]$ be a polynomial ring in $n+1$ indeterminates. Let $R[L]$ be the subring of $R$ which is generated over $K$ by 
the set of monomials $\{t\prod_{p_i\in \alpha}x_i: \alpha\in \MI(P)\}.$  Hibi showed in \cite{Hibi} that $R[L]$ is an algebra with straightening laws (ASL in brief) on $P$. We recall here the definition of an ASL. The reader may consult \cite{Eis} for a quick introduction to this topic. 

Let $A$ be a $K$--algebra, $H$ a finite poset, and $\varphi:H\to A$ an injective map. We identify $x\in H$ with $\varphi(x)\in A.$ A {\em standard monomial} in $A$ is a monomial of the form $\alpha_1\ldots \alpha_k$ where $\alpha_1\leq \cdots \leq \alpha_k.$

\begin{Definition}{\em 
The algebra $A$ is called an {ASL} on $H$ over $K$ if the following hold:
\begin{itemize}
	\item [(ASL-1)] The set of standard monomials form a $K$--basis of $A;$
	\item [(ASL-2)] If $\alpha,\beta\in H$ are incomparable and if $\alpha\beta=\sum r_i \gamma_{i1}\ldots \gamma_{ik_i},$ where $r_i\in K\setminus\{0\}$ and $\gamma_{i1}\leq \ldots \leq \gamma_{ik_i},$ is the unique expression of $\alpha\beta$ as a linear combination of standard monomials, then $\gamma_{i1}\leq \alpha, \beta$ for all $i.$
\end{itemize}
}
\end{Definition}

The relations in axiom (ASL-$2$) are called the {\em straightening relations} of $A.$ They generate the presentation ideal of $A;$ see 
\cite[Theorem 3.4]{Eis}.

\medskip

Let $L=\MI(P)$ be a distributive lattice with $P=\{p_1,\ldots,p_n\}$ and $$\varphi:L\to R=K[t,x_1,\ldots, x_n]$$ given by 
\[
\varphi(\alpha)=t\prod_{p_i\in \alpha} x_i \text{ for  } \alpha\in L.
\]

One observes that, for any $\alpha,\beta\in L, $
\begin{equation}\label{eq1}
\varphi(\alpha)\varphi(\beta)=\varphi(\alpha\cap\beta)\varphi(\alpha\cup\beta).
\end{equation}

We  show now that the Hibi ring $R[L]$ is an ASL on $L$ over $K.$
Axiom (ASL-2) is a straightforward consequence of equality (\ref{eq1}).

For (ASL-1), it is enough to show that for any two chains $\alpha_1\subseteq \cdots \subseteq \alpha_k$ and $\beta_1\subseteq \cdots \subseteq \beta_\ell$ in $L,$ we have $\varphi(\alpha_1)\ldots \varphi(\alpha_k)=\varphi(\beta_1)\ldots \varphi(\beta_\ell)$ if and only if  $k=\ell$ and $\alpha_i=\beta_i$ for all $i.$ This will imply that the standard monomials are distinct, so they form a $K$--basis of $R.$

Let $\varphi(\alpha_1)\cdots \varphi(\alpha_k)=\varphi(\beta_1)\cdots \varphi(\beta_\ell)$, that is, 
$t^k\prod_{i=1}^k (\prod_{p_j\in \alpha_i}x_j)=t^\ell\prod_{i=1}^\ell (\prod_{p_j\in \beta_i}x_j).$ This equality obviously implies that 
$k=\ell$ and $\prod_{i=1}^k (\prod_{p_j\in \alpha_i}x_j)=\prod_{i=1}^k (\prod_{p_j\in \beta_i}x_j).$ Therefore, we have 
\[(\prod_{p_i\in \alpha_1}x_i)^k (\prod_{p_i\in \alpha_2\setminus \alpha_1}x_i)^{k-1}\cdots (\prod_{p_i\in \alpha_k\setminus \alpha_{k-1}}x_i)=(\prod_{p_i\in \beta_1}x_i)^k (\prod_{p_i\in \beta_2\setminus \beta_1}x_i)^{k-1}\cdots (\prod_{p_i\in \beta_k\setminus \beta_{k-1}}x_i).
\] This equality implies that $\alpha_i=\beta_i$ for $1\leq i\leq k.$

 Thus, we have proved that $R[L]$ is an ASL. Since the 
straightening relations generate the presentation ideal of $R[L],$ we get $R[L]\cong K[L]/I_L$ where $K[L]=K[\{x_\alpha: \alpha\in L\}]$
 and $I_L=(x_\alpha x_\beta-x_{\alpha\cap\beta}x_{\alpha\cup\beta}: \alpha,\beta\in L, \alpha,\beta \text{ incomparable}).$ 

The presentation ideal $I_L$ is called the (binomial\footnote{One may find also the notion of monomial Hibi ideal in  literature. But we do not discuss this notion in this paper. Therefore, we will omit "binomial" when we refer to binomial Hibi ideals.}) {\em Hibi ideal} or the {\em join-meet ideal} of $L.$

\subsubsection{Gr\"obner bases of Hibi ideals}\label{GBsubsect} As above, let $K[L]$ be the polynomial ring in the variables $x_\alpha$ with $\alpha$ in 
$L$ and $I_L\subset K[L]$ the Hibi ideal associated with $L$. We order linearly  the  variables of $K[L]$ such that $x_\alpha\leq x_\beta$ if $\alpha\subseteq\beta$ in $L.$ We consider the reverse lexicographic order $<$ on $K[L]$ induced by this order of the variables. 

The following theorem appears in \cite[Chapter 10]{HH10}. We give here a different proof.

\begin{Theorem} \cite[Theorem 10.1.3]{HH10}\label{GBHibi}
The generators of $I_L$ form the reduced Gr\"obner basis of $I_L$ with respect to $<.$
\end{Theorem}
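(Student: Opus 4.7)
The plan is to identify the initial term of each generator, to show via a Hilbert function comparison that these account for all of $\ini_<(I_L)$, and to verify reducedness by direct inspection. For incomparable $\alpha,\beta\in L$, I have $\alpha\cap\beta\subsetneq\alpha,\beta\subsetneq\alpha\cup\beta$, so in the given linear order $x_{\alpha\cap\beta}$ is strictly smaller than $x_\alpha,x_\beta$ and $x_{\alpha\cup\beta}$ is strictly larger. In the reverse lexicographic order the monomial in which the smallest differing variable carries the \emph{smaller} exponent is the larger; since $x_{\alpha\cap\beta}$ is the smallest variable involved and appears with exponent $0$ in $x_\alpha x_\beta$ versus $1$ in $x_{\alpha\cap\beta}x_{\alpha\cup\beta}$, this yields
\[
\ini_<(x_\alpha x_\beta - x_{\alpha\cap\beta}x_{\alpha\cup\beta}) = x_\alpha x_\beta.
\]
Setting $J := (x_\alpha x_\beta : \alpha,\beta \in L, \ \alpha,\beta \text{ incomparable})\subseteq K[L]$, we thus have $J \subseteq \ini_<(I_L)$.

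To upgrade this inclusion to an equality, which is exactly the Gr\"obner basis property, I would compare Hilbert functions. A monomial of $K[L]$ lies outside $J$ precisely when its support is a chain in $L$, so a $K$-basis of $(K[L]/J)_d$ is indexed by the multichains $\alpha_1 \leq \cdots \leq \alpha_d$ in $L$. These are exactly the degree-$d$ standard monomials of $R[L]$, which by axiom (ASL-1)---already established in the paragraphs preceding the theorem---form a $K$-basis of $R[L]_d = (K[L]/I_L)_d$. Hence $K[L]/J$ and $K[L]/I_L$ share the same Hilbert function, and combining this with the general fact that $\dim_K(K[L]/I_L)_d = \dim_K(K[L]/\ini_<(I_L))_d$ together with the inclusion $J \subseteq \ini_<(I_L)$ forces $J = \ini_<(I_L)$.

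For reducedness, each generator has coefficients $\pm 1$, so I need only check that no term of one generator is divisible by the initial term of another. The initial terms $x_\alpha x_\beta$ are pairwise distinct squarefree monomials of degree two, one for each unordered incomparable pair, hence none divides another; moreover, the trailing monomial $x_{\alpha\cap\beta}x_{\alpha\cup\beta}$ is not divisible by any $x_\gamma x_\delta$ with $\gamma,\delta$ incomparable, because $\alpha\cap\beta$ and $\alpha\cup\beta$ are comparable in $L$. The main delicate point is getting the direction of the reverse lexicographic comparison right; once $\ini_<$ is correctly identified, the remainder of the argument is just (ASL-1), which has already been proved.
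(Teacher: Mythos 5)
Your proof is correct, but it takes a genuinely different route from the paper's. After identifying $\ini_<(f_{\alpha,\beta})=x_\alpha x_\beta$ (your reverse-lexicographic comparison is right: $x_{\alpha\cap\beta}$ is the smallest of the four variables involved, and it carries the smaller exponent in $x_\alpha x_\beta$), you force $J=\ini_<(I_L)$ by a dimension count: the monomials outside $J$ are exactly those supported on chains of $L$, hence are counted by the standard monomials, and (ASL-1), proved in the paper just before this theorem, gives the same count for a $K$-basis of $K[L]/I_L$ and therefore of $K[L]/\ini_<(I_L)$. The paper instead runs Buchberger's criterion directly: $S$-pairs with coprime initial terms reduce to zero for free, and the remaining pairs $S_<(f_{\alpha,\beta},f_{\alpha,\gamma})$ are handled by an explicit standard expression in terms of the generators. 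The trade-off is clear: your argument avoids any syzygy computation but depends on the prior verification of (ASL-1) (the linear independence of standard monomials), whereas the paper's computation is self-contained and, read backwards, reproves (ASL-1) as a corollary, since the standard monomials are precisely the monomials outside the squarefree initial ideal. It is worth noting that the paper explicitly advertises its argument as a proof different from the one in \cite{HH10}; your counting argument is much closer in spirit to that original proof. Your reducedness check (distinct squarefree quadratic leading terms, and trailing terms supported on comparable pairs) is also correct, and is a point the paper's proof leaves implicit.
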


\begin{proof} For $\alpha,\beta\in L,$ we set $f_{\alpha,\beta}=x_\alpha x_\beta-x_{\alpha\cap\beta} x_{\alpha\cup\beta}.$ Obviously, 
$f_{\alpha,\beta}=0$ if and only if $\alpha$ and $\beta$ are comparable. If $\alpha,\beta$ are incomparable, then $\ini_<(f_{\alpha,\beta})=x_\alpha x_\beta.$

According to Buchberger's criterion, it is enough to show that  all the $S$--polynomials $S_<(f_{\alpha,\beta},f_{\gamma,\delta})$ reduce to zero for any pair of generators $f_{\alpha,\beta},f_{\gamma,\delta}$ of $I_L.$ If $\ini_<(f_{\alpha,\beta})$ and $\ini_<(f_{\gamma,\delta})$ are relatively prime, then it is known that $S_<(f_{\alpha,\beta},f_{\gamma,\delta})$ reduces to $0;$ see 
\cite[Poposition 2.15]{EH}. It remains to show that any $S$--polynomial of the form $S_<(f_{\alpha,\beta},f_{\alpha,\gamma})$ reduces to $0.$ But this follows immediately since one may easily check that the following equality is a standard expression of $S_<(f_{\alpha,\beta},f_{\alpha,\gamma})$:
\[
S_<(f_{\alpha,\beta},f_{\alpha,\gamma})=x_{\alpha\cup\gamma}f_{\beta,\alpha\cap\gamma}-x_{\alpha\cup\beta}f_{\gamma,\alpha\cap\beta}
+x_{\alpha\cap\beta\cap\gamma}(f_{\alpha\cup\gamma,\beta\cup(\alpha\cap\gamma)}-f_{\alpha\cup\beta,\gamma\cup(\alpha\cap\beta)}).
\]
\end{proof}

The above theorem has important consequences for the Hibi ring $R[L].$ In the first place, by Theorem~\ref{GBHibi}, it follows that 
\[
\ini_<(I_L)=(x_\alpha x_\beta: \alpha,\beta\in L, \alpha, \beta \text{ incomparable}).
\]
Hence, $\ini_<(I_L)$ is a squarefree monomial ideal. A well-known theorem of Sturmfels \cite[Theorem 5.16]{EH} implies that $R[L]$ is a normal domain. A theorem due to Hochster \cite{Ho72}, shows that $R[L]$ is Cohen-Macaulay. 

The Cohen-Macaulay property of $I_L$ may be deduced also in the following way. The initial ideal $\ini_<(I_L)$ is the Stanley-Reisner ideal \footnote{For the Stanly-Reisner theory we refer the reader to the monographs \cite{BHbook, Sta2}.} of the order complex of $L. $ This is defined as the complex of all the chains in $L.$ By a theorem of Bj\"orner \cite[Theorem 5.1.12]{BHbook}, this complex is shellable, thus its Stanley-Reisner ideal is Cohen-Macaulay. Then $I_L$ is Cohen-Macaulay as well; see \cite[Corollary 3.3.5]{HH10}.

\subsubsection{Some comments} We end this section with a few comments. 

One may obviously consider the following more general settings. Let $L$ be an arbitrary lattice, hence not necessarily distributive, and 
$K[L]$ the polynomial ring $K[\{x_\alpha: \alpha\in L\}]$. Let $I_L=(f_{\alpha,\beta}: \alpha,\beta\in L, \alpha,\beta \text{ incomparable}).$ Hibi showed in \cite{Hibi} and it is easily seen that $I_L$ is a prime ideal if and only if $L$ is distributive.

One may naturally ask whether $I_L$ is however a radical ideal when $L$ is not distributive. This is not the case and one may check, for instance, that for the lattice $L$ given in the left side of Figure~\ref{nondis}, $I_L$ is not a radical ideal. On the other hand, if $L$ is a pentagon (Figure~\ref{pentagon}), then $I_L$ is radical. The following problem would be of interest.

\begin{Problem}\label{Pb1}
Find classes of non-distributive lattices $L$ with the property that $I_L$ is a radical ideal.
\end{Problem}

If $I_L$ is a radical ideal, then its minimal prime ideals may be described in terms of the combinatorics of $L;$ see \cite[Section 2]{EHi}.

A reverse lexicographic order $<$ in $K[L]$ with the property that $\rank \alpha<\rank\beta$ implies that $\alpha<\beta$ is called a {\em 
rank reverse lexicographic order}. The following theorem from \cite{HHNach} characterizes the distributive lattices amongst the modular lattices in terms of the Gr\"obner bases of their ideals. 

\begin{Theorem}\cite[Theorem 2.1]{HHNach}
Let $L$ be a modular lattice. Then $L$ is distributive if and only if $I_L$ has a squarefree Gr\"obner basis with respect to any rank reverse lexicographic order.
\end{Theorem}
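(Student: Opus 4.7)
The plan is to prove each direction separately. The forward direction reduces almost immediately to Theorem~\ref{GBHibi}, while the backward direction requires exploiting an $M_3$-sublattice of $L$ to manufacture, by a small chain of $S$-polynomial reductions, an element of $I_L$ whose leading monomial is not squarefree.

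For the forward direction, suppose $L$ is distributive and fix an arbitrary rank reverse lexicographic order $<$ on $K[L]$. If $\alpha\subsetneq\beta$ in $L$, then the graded structure of a distributive lattice forces $\rank\alpha<\rank\beta$, so $x_\alpha<x_\beta$. Thus $<$ satisfies the hypothesis of Theorem~\ref{GBHibi}, which gives a Gr\"obner basis of $I_L$ with squarefree initial ideal $(x_\alpha x_\beta : \alpha,\beta \text{ incomparable in } L)$.

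For the backward direction, I argue the contrapositive. Assuming $L$ is modular but not distributive, Proposition~\ref{modular} guarantees $L$ has no pentagon sublattice, while Definition~\ref{distrib} (via Birkhoff's criterion that avoids both $N_5$ and $M_3$) forces $L$ to contain a diamond $M_3=\{p,a,b,c,q\}$ as a sublattice, where $p=a\cap b=a\cap c=b\cap c$, $q=a\cup b=a\cup c=b\cup c$, and $a,b,c$ are pairwise incomparable of equal rank. Fix any rank reverse lexicographic order on $L$ extending $x_a<x_b<x_c$. Three successive $S$-polynomial reductions among the Hibi binomials $f_{a,b},f_{a,c},f_{b,c}\in I_L$ yield, in turn,
\[
g_1=x_p x_q x_c-x_p x_q x_b,\quad g_2=x_p x_q x_b-x_p x_q x_a,\quad h=x_p x_q x_a^2-x_p^2 x_q^2,
\]
all lying in $I_L$, with leading monomials $x_p x_q x_c$, $x_p x_q x_b$, and $x_p x_q x_a^2$ respectively. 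The last of these exhibits a non-squarefree monomial inside $\ini_<(I_L)$.

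The main obstacle I expect is the final bookkeeping step: to conclude that $\ini_<(I_L)$ is genuinely not a squarefree ideal, I must rule out $x_p x_q x_a\in\ini_<(I_L)$, since this is the squarefree radical of $\ini_<(h)$. Since $p\le a\le q$ in $L$, the elements $p,q,a$ form a chain, and no Hibi binomial $f_{\alpha,\beta}$ with $\{\alpha,\beta\}\subseteq\{p,a,q\}$ is a nontrivial generator. A short linear-algebra argument inside the $K$-span of $\{x_p x_q x_a, x_p x_q x_b, x_p x_q x_c\}$, using the relations recorded by $g_1$ and $g_2$, shows that every nonzero element of $I_L$ supported on these monomials has leading term equal to $x_p x_q x_b$ or $x_p x_q x_c$, each strictly larger than $x_p x_q x_a$. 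This forbids $x_p x_q x_a\in\ini_<(I_L)$ and completes the proof that the initial ideal fails to be squarefree.
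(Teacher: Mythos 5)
The survey does not actually prove this statement; it is quoted from \cite{HHNach} without proof, so there is no in-paper argument to compare against. Your forward direction is correct and is exactly what the paper's machinery gives: a finite distributive lattice is graded with $\alpha\subsetneq\beta$ forcing $\rank\alpha<\rank\beta$, so every rank reverse lexicographic order satisfies the hypothesis of Theorem~\ref{GBHibi}, whose proof yields the squarefree initial ideal $(x_\alpha x_\beta:\alpha,\beta\ \text{incomparable})$. Your reduction of the converse to a diamond $M_3=\{p,a,b,c,q\}$ is also the right move, and the elements $g_1,g_2,h$ do lie in $I_L$ with $\ini_<(h)=x_px_qx_a^2$ as claimed.

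The gap is in the last step, the one you yourself flag as the main obstacle. To conclude that $\ini_<(I_L)$ is not squarefree you must show $x_px_qx_a\notin\ini_<(I_L)$, i.e.\ that no nonzero element of $I_L$ has leading monomial equal to $x_px_qx_a$ \emph{or to one of its divisors} $x_px_q$, $x_px_a$, $x_qx_a$. Restricting attention to polynomials supported on the three monomials $x_px_qx_a,x_px_qx_b,x_px_qx_c$ proves nothing: a degree-$3$ element of $I_L$ with leading monomial $x_px_qx_a$ may involve arbitrary revlex-smaller monomials in all the variables of $K[L]$. The correct control comes from the congruence on monomials generated by $ux_\alpha x_\beta\sim ux_{\alpha\wedge\beta}x_{\alpha\vee\beta}$: since $I_L$ is generated by differences of congruent monomials, a polynomial lies in $I_L$ exactly when its coefficients sum to zero on each congruence class, and hence a monomial is a leading monomial in its degree if and only if it is not the minimum of its class. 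The class of $x_px_qx_a$ contains, besides $x_px_qx_b$ and $x_px_qx_c$, also $x_ax_bx_c$, $x_a^2x_b$, and possibly monomials $x_px_qx_d$ for other $d\in[p,q]$ with $\rho(d)=\rho(a)$; if such a $d$ exists with $x_d<x_a$, then $x_px_qx_a-x_px_qx_d\in I_L$ has leading term $x_px_qx_a$ and your conclusion fails for that order. So the claim for ``any'' rank reverse lexicographic order extending $x_a<x_b<x_c$ is not established; you should instead \emph{choose} the order so that $x_a$ is minimal among the variables of its rank level inside $[p,q]$ (existence of one bad order suffices for the contrapositive), verify via the meet/join/rank-sum invariants of the congruence that every member of the class then dominates $x_px_qx_a$, and separately observe that $x_px_q$, $x_px_a$, $x_qx_a$, being supported on chains, are the minima of their degree-$2$ classes and so are not leading monomials either. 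With those repairs the argument closes, but as written the decisive step is not proved.
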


Moreover, in the same paper, the authors conjectured that if $L$ is modular, then, for any monomial order, $\ini_<(I_L)$ is not squarefree, unless $L$ is distributive. This conjecture was proved in \cite{EHi}.

\begin{Theorem}\cite{EHi}\label{HH conjecture}
Let $L$ be  a modular non-distributive lattice. Then, for any monomial order $<$ on $K[L]$, the initial ideal $\ini_<(I_L)$ is not squarefree.
\end{Theorem}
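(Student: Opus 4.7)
The plan is to exploit a diamond sublattice $M_3 \subseteq L$. Since $L$ is non-distributive, by a classical result of Birkhoff it contains a sublattice isomorphic either to the pentagon or to $M_3$; Proposition~\ref{modular}(iii) excludes the pentagon for modular $L$, so $L$ must contain a copy of $M_3$. Label its elements $d < a, b, c < e$, where $a, b, c$ are pairwise incomparable and $a\wedge b = a\wedge c = b\wedge c = d$, $a\vee b = a\vee c = b\vee c = e$. The three corresponding generators
\[
f_1 = x_a x_b - x_d x_e, \quad f_2 = x_a x_c - x_d x_e, \quad f_3 = x_b x_c - x_d x_e
\]
all lie in $I_L$, as do their pairwise differences $f_1 - f_2 = x_a(x_b - x_c)$, $f_1 - f_3 = x_b(x_a - x_c)$, and $f_2 - f_3 = x_c(x_a - x_b)$.

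The central construction is the polynomial $q = (x_a - x_b)(x_a - x_c)(x_b - x_c)$. I would first verify the identity
\[
q = x_a(f_1 - f_2) - x_b(f_1 - f_3) + x_c(f_2 - f_3),
\]
which shows immediately that $q \in I_L$. Expanding $q$ and cancelling the $\pm x_a x_b x_c$ cross-terms yields
\[
q = x_a^2 x_b - x_a^2 x_c - x_a x_b^2 + x_a x_c^2 + x_b^2 x_c - x_b x_c^2,
\]
so every monomial of $q$ has the form $\pm x_i^2 x_j$ with distinct $i, j \in \{a, b, c\}$; in particular, every monomial of $q$ is non-squarefree.

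Now fix any monomial order $<$ on $K[L]$. Then $\ini_<(q) = x_i^2 x_j$ is a non-squarefree monomial in $\ini_<(I_L)$. Suppose for contradiction that $\ini_<(I_L)$ is squarefree as a monomial ideal. Since $I_L$ has no elements of degree at most one, neither does $\ini_<(I_L)$; and since $\ini_<(q)$ must be a multiple of some squarefree minimal generator, that generator must be $x_i x_j$. Hence $x_i x_j \in \ini_<(I_L)$, i.e., some $g \in I_L$ has $\ini_<(g) = x_i x_j$.

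The main obstacle, and the technical heart of the argument in \cite{EHi}, is to rule out this possibility by a case analysis based on the relative order of the four monomials $x_a x_b, x_a x_c, x_b x_c, x_d x_e$ under $<$. In each case one would carry out further S-polynomial reductions involving $g$ and the $f_i$'s to exhibit a non-squarefree minimal monomial generator of $\ini_<(I_L)$: for instance, reducing $S(f_3, S(f_1, f_2))$ produces $x_c^2 x_d x_e - x_d^2 x_e^2$, both of whose monomials are non-squarefree, and a similar construction using $S(S(f_1,f_2), S(f_1,f_3))$ produces elements like $x_b^2 x_c - x_b x_c^2$. The polynomial $q$ serves as the uniform witness across all orders, and the remaining work is to check that in every case the emerging non-squarefree leading monomial cannot be ``absorbed'' by the candidate squarefree generators $x_i x_j$, contradicting the squarefreeness assumption.
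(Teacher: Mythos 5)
First, a caveat: the survey does not prove this theorem at all — it only states it and cites \cite{EHi} — so there is no in-paper argument to compare your proposal against; I can only judge it on its own terms. Your opening moves are correct: a modular non-distributive lattice contains a diamond sublattice (the pentagon being excluded by Proposition~\ref{modular}), the binomials $f_1,f_2,f_3$ lie in $I_L$, and your identity for $q=(x_a-x_b)(x_a-x_c)(x_b-x_c)$ checks out, so $q\in I_L$ with all monomials non-squarefree.

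The genuine gap is at the logical core. The membership $x_ix_j\in\ini_<(I_L)$ that you propose to ``rule out'' is simply \emph{true} for many monomial orders, so no contradiction can be extracted from it. Concretely, whenever $x_dx_e$ is smaller than each of $x_ax_b$, $x_ax_c$, $x_bx_c$, one has $x_ax_b=\ini_<(f_1)$, $x_ax_c=\ini_<(f_2)$, $x_bx_c=\ini_<(f_3)$, so all three squarefree monomials lie in $\ini_<(I_L)$; then $\ini_<(q)=x_i^2x_j$ is divisible by a squarefree element of $\ini_<(I_L)$ and is perfectly consistent with squarefreeness. Thus $q$ is not a uniform witness, and in such cases the failure of squarefreeness must be certified by a completely different monomial: one has to continue Buchberger's algorithm (producing elements such as $x_c^2x_dx_e-x_d^2x_e^2$, as you note) and then prove that the radical of the resulting leading term, e.g.\ $x_cx_dx_e$ or $x_dx_e$, does \emph{not} lie in $\ini_<(I_L)$ — a claim that requires controlling contributions from all generators of $I_L$, not only $f_1,f_2,f_3$ (for instance $x_dx_e\in\ini_<(I_L)$ whenever $x_dx_e>x_\alpha x_\beta$ for some incomparable pair with meet $d$ and join $e$). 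This order-dependent case analysis, which you explicitly defer as ``the technical heart,'' is the entire proof; nothing preceding it forces a contradiction, so what you have is a plausible opening plus a correct auxiliary identity, not a proof.
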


Note that, for the diamond lattice $L$ (the lattice displayed in Figure~\ref{nondis} in the right side), $I_L$ is radical. However,
as $L$ is modular, none of its initial ideals $\ini_<(I_L)$ is squarefree.  This simple example shows that the approach of  Problem~\ref{Pb1} is not so easy. One of the most common techniques to show that a polynomial ideal $I$ is radical is to find an initial ideal of $I$ which is radical. Unfortunately, as we have seen in Theorem~\ref{HH conjecture}, this technique cannot be applied in approaching Problem~\ref{Pb1}.
 
\subsection{The canonical module of a Hibi ring}
\label{canonicalHibi}

Let $L=\MI(P)$ be a distributive lattice  with $P=\{p_1,\ldots,p_n\}$ and $R[L]\subset K[t,x_1,\ldots,x_n]$ the associated Hibi ring. 
As we have already seen,  $R[L]$ is and ASL on $L$ over $K$ which  has as $K$--basis the standard monomials. This implies that every monomial in $R[L]$ is of the form $t^{w_0}x_1^{w_1}\cdots x_n^{w_n}$ where $(w_0,w_1,\ldots,w_n)\in \NN^{n+1}$ with $w_0\geq w_
i$ for all $i$ and $w_i\geq w_j$ if $p_i\leq p_j$ in $P.$ 

Since $R[L]$ is a domain, the canonical module $\omega_L$ of $R[L]$ is an ideal of $R[L];$ see \cite[Proposition 3.3.18]{BHbook}.
By a theorem of Stanley \cite{Sta3}, a $K$--basis of the canonical ideal $\omega_L$ is given by the monomials $t^{w_0}x_1^{w_1}\cdots x_n^{w_n}\in R[L]$ with $w_0>w_i>0$ for all $i$ and $w_i>w_j$ if $p_i<p_j$ in $P.$ 

Let $\hat{P}=P\cup\{-\infty,\infty\}$ be the poset defined in Subsection~\ref{combinat} and $\MS(P)$ the set of all functions 
$v:\hat{P}\to \NN$ with $v(\infty)=0$ and $v(p)\leq v(q)$ if $p\geq q$ in $\hat{P}.$ A function $v$ as above is called an 
{\em order reversing map}. A function $v\in \MS(P)$ is  a {\em strictly order reversing map} if $v(p)<v(q)$ if $p>q$ in $\hat{P}.$
Let $\MT(P)$ be the set of all strictly order reversing maps on $\hat{P}.$ Then, from what we said above, it follows that a $K$--basis of the canonical ideal $\omega_L$ is given by the set  $\{v^{(-\infty)}\prod_{i=1}^n x_i^{v(p_i)}: v\in \MT(P)\}.$

On $\MT(P)$ one defines the following partial order. For $v,v^\prime\in \MT(P),$ we set $v\geq v^\prime$ if the following conditions hold:
\begin{itemize}
	\item [(i)] $v(p)\geq v^\prime(p)$ for all $p\in \hat{P},$
	\item [(ii)] the function $v-v^\prime\in \MS(P)$, where $v-v^\prime: \hat{P}\to \NN$ is defined by $(v-v^\prime)(p)=v(p)-v^\prime(p)$ for 
	all $p\in \hat{P}.$
\end{itemize}

It follows that the minimal generators of $\omega_L$ are in one-to-one correspondence with the minimal elements of the poset $\MT(P).$
In particular, $R[L]$ is Gorenstein if and only if $\MT(P)$ has a unique minimal element. 

The following theorem was proved in \cite[\S 3]{Hibi}. Before stating it, we need to introduce some notation. For $x\in \hat{P},$ $\depth x$ denotes the rank of the subposet of $\hat{P}$ consisting of all elements $y\geq x$ in $\hat{P}$, and $\height x$ denotes the rank of the subposet of $\hat{P}$ which consists of all $y\in \hat{P}$ with $y\leq x.$ The number $\coheight x=\rank\hat{P}-\height x$ is called the {\em coheight} of $x.$ It is clear that the functions $\depth$ and $\coheight$ belong to $\MT(P).$ In addition, on easily sees that, for any $x,y\in \hat{P}$ with  $x\gtrdot y,$ we have $\depth y\geq \depth x+1$ and $\height x \geq \height y+1.$ If $P$ is pure, then 
$\depth x+ \height x =\rank \hat{P}$ for any $x\in \hat{P}.$

Let $v\in \MT(P)$ and $-\infty<p_0<p_1<\cdots < p_r<\infty$ be a maximal chain in $\hat{P}$ 
with $r=\rank P.$ Then $v(-\infty)>v(p_0)>v(p_1)>\cdots > v(P_r)>v(\infty)=0$, which implies that 
\begin{equation}\label{firstineq}
v(-\infty)\geq \rank \hat{P}=\rank P+2.
\end{equation}

With similar arguments, one shows that, for all $x\in \hat{P},$
\begin{equation}\label{secondineq}
v(x)\geq \depth x.
\end{equation}

\begin{Theorem}\cite{Hibi}\label{GorensteinHibi}
The Hibi ring $R[L]$ is Gorenstein if and only if $P$ is pure.
\end{Theorem}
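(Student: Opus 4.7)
My plan is to translate the Gorenstein condition into a combinatorial problem about the poset $\MT(P)$ and to analyze the role of the function $\depth$ inside it. Since $R[L]$ is Cohen--Macaulay (Subsection~\ref{GBsubsect}), it is Gorenstein if and only if $\type R[L]=1$, i.e.\ the canonical module $\omega_L$ has a unique minimal generator. I would first verify, using the $K$-basis of $\omega_L$ just described, that the minimal generators of $\omega_L$ correspond bijectively to the minimal elements of $\MT(P)$ in its partial order: multiplying the basis element attached to $v\in\MT(P)$ by the generator $u_\alpha=t\prod_{p_i\in\alpha}x_i$ of $R[L]$ amounts to replacing $v$ by $v+\chi'_\alpha$, where $\chi'_\alpha\in\MS(P)$ is defined by $\chi'_\alpha(-\infty)=1$, $\chi'_\alpha(p_i)=[p_i\in\alpha]$, $\chi'_\alpha(\infty)=0$; and every non-zero $w\in\MS(P)$ decomposes as $w=\sum_{j=1}^{w(-\infty)}\chi'_{\alpha_j}$ with $\alpha_j=\{p\in P:w(p)\geq j\}\in L$. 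The theorem therefore reduces to showing that $\MT(P)$ has a unique minimal element iff $P$ is pure.

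By inequality (\ref{secondineq}) the function $\depth$ lies pointwise below every element of $\MT(P)$, so $\depth$ is always a minimal element of $\MT(P)$. If $P$ is pure, then $\depth x=\depth y+1$ for every cover $y\gtrdot x$ in $\hat{P}$; combined with $v(x)\geq v(y)+1$ from strict order-reversal of any $v\in\MT(P)$, this gives $(v-\depth)(x)\geq(v-\depth)(y)$ at every cover, hence $v-\depth\in\MS(P)$. Thus $\depth\leq v$ in the partial order of $\MT(P)$ for every $v\in\MT(P)$, so $\depth$ is in fact the minimum, and in particular the unique minimal element.

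Conversely, assume $P$ is not pure, so some maximal chain $-\infty=x_0\lessdot x_1\lessdot\cdots\lessdot x_s=\infty$ in $\hat{P}$ has length $s<\rank\hat{P}$. Telescoping $\rank\hat{P}=\depth x_0\geq\depth x_s+s=s$ via $\depth x_i\geq\depth x_{i+1}+1$ must fail to be an equality at some step, yielding a cover $y\gtrdot x$ in $\hat{P}$ with $\depth x\geq\depth y+2$. Since $\coheight\in\MT(P)$ satisfies $\coheight y-\coheight x=-1$ at every cover, we obtain $(\coheight-\depth)(y)-(\coheight-\depth)(x)=\depth x-\depth y-1\geq 1>0$, so $\coheight-\depth\notin\MS(P)$ and therefore $\depth\not\leq\coheight$ in $\MT(P)$. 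The non-empty finite downset $\{v\in\MT(P):v\leq\coheight\}$ possesses a minimal element $v^\ast$, which is then also minimal in $\MT(P)$, and $v^\ast\neq\depth$ since $\depth\not\leq\coheight$. Hence $\MT(P)$ has at least two minimal elements and $R[L]$ is not Gorenstein. I expect the main obstacle to be the algebra-to-combinatorics dictionary in the first paragraph; once the identification between minimal generators of $\omega_L$ and minimal elements of $\MT(P)$ is in place, both directions follow from elementary chain analysis of $\depth$ and $\coheight$ on $\hat{P}$.
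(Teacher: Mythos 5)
Your reduction to the combinatorics of $\MT(P)$ and your forward direction (pure $\Rightarrow$ unique minimal element $\depth$) are correct and essentially identical to the paper's argument. The converse, however, has a genuine gap. You assert that $\coheight(y)-\coheight(x)=-1$ for every cover $y\gtrdot x$ in $\hat{P}$, i.e.\ that $\height(y)=\height(x)+1$ at every cover. This is false in general: $y$ can cover $x$ and still admit a longer saturated chain from $-\infty$ avoiding $x$ (take $P=\{x,z_1,z_2,y\}$ with $x<y$, $z_1<z_2<y$ and $x$ incomparable to $z_1,z_2$; then $y\gtrdot x$ but $\height(y)=3$ and $\height(x)=1$ in $\hat P$). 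Without that identity your computation of $(\coheight-\depth)(y)-(\coheight-\depth)(x)$ does not yield a positive number, so you have not shown $\coheight-\depth\notin\MS(P)$.

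The failure is not repairable within your strategy, because the choice of $\coheight$ as the second witness is itself the problem. There exist non-pure posets with $\height(x)+\depth(x)=\rank\hat{P}$ for every $x\in\hat P$ --- precisely the pseudo-Gorenstein, non-Gorenstein lattices of Theorem~\ref{classification} and Figure~\ref{exampleGor} --- and for these one has $\coheight=\depth$ identically, so $\depth\leq\coheight$ trivially and the minimal element $v^\ast$ you extract below $\coheight$ is $\depth$ itself. Your proof would therefore wrongly certify such lattices as Gorenstein. The paper avoids this by building a second minimal element by hand: starting from a cover $x\gtrdot y$ with $\depth(y)>\depth(x)+1$, it perturbs $\depth$ to $w(z)=\depth(z)+1$ for $z\leq x$, $z\neq y$, and $w(z)=\depth(z)$ otherwise; then $w\in\MT(P)$, $w\geq\depth$ pointwise, but $(w-\depth)(x)=1>0=(w-\depth)(y)$ shows $w\not\geq\depth$ in $\MT(P)$, so any minimal element below $w$ differs from $\depth$. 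You need some such explicit construction (or another genuinely new witness) to close the converse.
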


\begin{proof}
To begin with, let $P$ be pure and $x,y\in \hat{P}$ with $x\gtrdot y.$ We get 
\[
\height y+\depth x +1=\rank\hat{P}=\height y+\depth y.
\] This implies that $\depth y=\depth x+1.$ By using this equality, we show that $\depth$ is the unique minimal element of $\MT(P).$ 
Let $v\in \MT(P).$ Obviously, $v(x)\geq \depth x$ for all $x\in \hat{P}.$ Let now $x,y\in \hat{P}$ with $x\gtrdot y.$ Then 
$v(x)-\depth x\leq v(y)-\depth y$ since $v(x)-v(y)\leq -1=\depth x-\depth y.$ Clearly, the inequality $v(x)-\depth x\leq v(y)-\depth y$
extends to any $x>y$ in $\hat{P}$ which shows that $v\geq \depth $ in $\MT(P).$

Conversely, let $R[L]$ be a Gorenstein  ring, that is, $\MT(P)$ has a unique minimal element. Assume that $P$ is not pure. Then there must be $x,y\in \hat{P}$ with $x\gtrdot y$ such that $\depth y>\depth x+1.$ We define 
$w\in \MT(P)$ as follows,
\[
w(z)=\left\{
\begin{array}{ll}
	1+\depth z, & z\leq x, z\neq y,\\
	\depth z, & \text{ otherwise.}
\end{array}
\right.\]
Then $w(z)\geq \depth z$ for all $z$ and $w(x)-\depth x=1> w(y)-\depth y=0.$ This shows that $w$ and $\depth$ are incomparable in 
$\MT(P)$ which implies that $\MT(P)$ has at least two minimal elements, a contradiction to our hypothesis.
\end{proof}

\begin{Examples}{\em
\begin{itemize}
	\item [1.] For the lattice $L$ displayed in Figure~\ref{Birhoff_fig}, the ring $R[L]$ is Gorenstein since the poset of the join-irreducible elements is pure.
	\item [2.] Let $P=\{p_1,p_2,p_3\}$ with $p_1<p_2.$ This poset is not pure, thus the Hibi ring of the lattice $L=\MI(P)$ is not Gorenstein.
\end{itemize}

}
\end{Examples}

\subsection{Generalized Hibi rings}
\label{generalized}

Hibi rings were generalized in \cite{EHM}. Let $P=\{p_1,\ldots,p_n\}$ be a poset and $\MI(P)$ the ideal lattice of $P.$ We fix a positive 
integer $r.$ An {\em $r$--multichain} in $P$ is a chain of poset ideals of $P$ of length $r$:
\[
\MI: \hspace{0.7cm} I_1\subseteq I_2\subseteq\cdots\subseteq I_{r-1}\subseteq I_r=P.
\]

Let $\MI_r(P)$ be the set of all $r$--multichains in $P.$ If $\MI: I_1\subseteq\cdots\subseteq I_{r-1}\subseteq I_r=P$ and 
$\MJ: J_1\subseteq\cdots\subseteq J_{r-1}\subseteq J_r=P$ are two $r$--multichains in $P,$ then
\[
\MI\cup \MJ: I_1\cup J_1\subseteq\cdots\subseteq I_{r-1}\cup  J_{r-1}\subseteq I_r\cup J_r=P
\]
and
\[
\MI\cap \MJ: I_1\cap J_1\subseteq\cdots\subseteq I_{r-1}\cap  J_{r-1}\subseteq I_r\cap J_r=P
\]
belong to $\MI_r(P)$ as well, hence $\MI_r(P)$ is a distributive lattice. 

With each $r$--multichain $\MI$ in  $\MI_r(P)$ we associate a monomial $u_{\MI}$ in the polynomial ring 
$S=K[\{x_{ij}:1\leq i\leq r,1\leq j\leq n\}]$ which is defined as
\[
u_{\MI}=x_{1J_1}x_{2J_2}\cdots x_{rJ_r}
\]
where 
\[
x_{kJ_k}=\prod_{p_\ell\in J_k}x_{k\ell} \text{ and } J_k=I_k\setminus I_{k-1} \text{ for all } 1\leq k\leq r.
\]

Let $R_r(P)$ be the $K$--subalgebra of $S$ generated by the set $\{u_{\MI}:\MI\in \MI_r(P)\}$. The ring $R_r(P)$ is called a 
{\em generalized Hibi ring.}

For example, for $r=2,$ an $r$--multichain of $P$ is of the form $I\subseteq P$ where $I$ is a poset ideal of $P.$ If we set $x_{1j}=x_j$
 and $x_{2j}=y_j$ for $1\leq j\leq n,$ then to the multichain $I\subset P$ we associate the monomial 
$\prod_{p_i\in I}x_i \prod_{p_i\notin I}y_i$. Hence, \[R_2(P)=K[\{\prod_{p_i\in I}x_i \prod_{p_i\notin I}y_i: I\in \MI(P)\}].\]
The ring $R_2(P)$ is isomorphic to the classical Hibi ring associated with the lattice $L=\MI(P)$ since they have the same defining relations as it follows as a particular case of Corollary~\ref{ASLcor}.

Similarly to the classical Hibi rings, we get the following result.

\begin{Theorem}\label{ASL general}
The ring $R_r(P)$ is an ASL on $\MI_r(P)$ over $K.$
\end{Theorem}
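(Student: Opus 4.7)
The plan is to mirror the proof that was sketched for classical Hibi rings in Subsection~\ref{subsectHibi}, exploiting a ``first appearance'' encoding of multichains. For a multichain $\MI\: I_1\subseteq\cdots\subseteq I_r=P$ and $p_\ell\in P$, set $\tau_\ell(\MI)=\min\{k\: p_\ell\in I_k\}$. Then by construction
\begin{equation*}
u_{\MI}=\prod_{\ell=1}^{n} x_{\tau_\ell(\MI),\,\ell},
\end{equation*}
so $u_{\MI}$ has degree exactly one in each of the blocks $\{x_{1,\ell},\ldots,x_{r,\ell}\}$. Moreover, the componentwise lattice order on $\MI_r(P)$ translates into the pointwise inequality $\MI\leq \MJ \iff \tau_\ell(\MI)\geq \tau_\ell(\MJ)$ for all $\ell$, and $\tau_\ell(\MI\cap\MJ)=\max(\tau_\ell(\MI),\tau_\ell(\MJ))$, $\tau_\ell(\MI\cup\MJ)=\min(\tau_\ell(\MI),\tau_\ell(\MJ))$.

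The first step is to verify the fundamental straightening identity
\begin{equation*}
u_{\MI}\,u_{\MJ}=u_{\MI\cap\MJ}\,u_{\MI\cup\MJ},
\end{equation*}
which reduces, block by block in $\ell$, to the trivial equality of unordered pairs $\{\tau_\ell(\MI),\tau_\ell(\MJ)\}=\{\max,\min\}$. Axiom (ASL-$2$) is then immediate: whenever $\MI,\MJ$ are incomparable, the unique standard expression of $u_\MI u_\MJ$ is $u_{\MI\cap\MJ} u_{\MI\cup\MJ}$, and $\MI\cap\MJ$ is the minimum, so $\MI\cap\MJ\leq \MI,\MJ$ as required.

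For (ASL-$1$) I would establish linear independence and spanning separately. For linear independence, suppose $u_{\MI_1}\cdots u_{\MI_k}=u_{\MJ_1}\cdots u_{\MJ_m}$ with $\MI_1\leq\cdots\leq\MI_k$ and $\MJ_1\leq\cdots\leq\MJ_m$. Computing the degree in a single block $\{x_{1,\ell},\ldots,x_{r,\ell}\}$ yields $k=m$. For each fixed $\ell$ the two products determine the same multiset $\{\tau_\ell(\MI_1),\ldots,\tau_\ell(\MI_k)\}=\{\tau_\ell(\MJ_1),\ldots,\tau_\ell(\MJ_k)\}$, and since the chain conditions force $\tau_\ell(\MI_1)\geq\cdots\geq\tau_\ell(\MI_k)$ and likewise for the $\MJ_i$, sorting the multiset recovers the individual entries. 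This gives $\tau_\ell(\MI_i)=\tau_\ell(\MJ_i)$ for every $i$ and every $\ell$, hence $\MI_i=\MJ_i$. For spanning, the straightening identity converts any product $u_\MI u_\MJ$ with $\MI,\MJ$ incomparable into a product of two strictly comparable multichains; iterating this rewriting in a well-founded order on finite multisets of elements of $\MI_r(P)$ (for instance the termination order used in \cite[Chapter 3]{Eis}) produces a standard expression.

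The principal routine obstacle is really only the identity verification, and that in turn collapses to the one-variable observation about $\min$ and $\max$; everything else is a faithful transcription of the classical argument given after equation~(\ref{eq1}). No piece of the proof requires $P$ to be a lattice or $r$ to be small, so the theorem holds in the stated generality.
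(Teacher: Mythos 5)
Your proposal is correct, and its overall skeleton coincides with the paper's: verify the join--meet identity $u_{\MI}u_{\MJ}=u_{\MI\cap\MJ}u_{\MI\cup\MJ}$ to get (ASL-2), and establish (ASL-1) by showing that distinct standard monomials give distinct monomials in $S$. Where you genuinely diverge is in the mechanism for the distinctness step. The paper works directly with the products $\prod_q\prod_k x_{k,\,\MI_{q,k}\setminus\MI_{q,k-1}}$ and recovers the chains by truncating to the first $\ell$ levels and inducting on $\ell=1,\ldots,r$; you instead encode each multichain by its first-appearance vector $\tau_\ell(\MI)=\min\{k: p_\ell\in I_k\}$, so that $u_{\MI}=\prod_\ell x_{\tau_\ell(\MI),\ell}$ is multilinear in each block, equality of products gives equality of multisets $\{\tau_\ell(\MI_i)\}_i=\{\tau_\ell(\MJ_i)\}_i$ for each $\ell$, and the chain condition (which reverses to $\tau_\ell(\MI_1)\geq\cdots\geq\tau_\ell(\MI_k)$) lets you sort and match entries. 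This buys a cleaner, non-inductive argument and makes transparent both why $k=m$ (block degree count) and why the identity holds (the unordered pair $\{\tau_\ell(\MI),\tau_\ell(\MJ)\}$ equals $\{\min,\max\}$). You are also more explicit than the paper about the spanning half of (ASL-1), which the paper leaves implicit; your appeal to a termination order is fine, though if you wanted to make it self-contained you would need a measure that strictly changes under the rewriting $\{\MI,\MJ\}\mapsto\{\MI\cap\MJ,\MI\cup\MJ\}$ (e.g.\ the sum of squared ranks, which strictly increases by modularity of the rank function on the distributive lattice $\MI_r(P)$), since the naive multiset order does not apply directly. No gaps otherwise.
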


\begin{proof}
The proof is similar to the corresponding statement for Hibi rings. 

Let $\psi: \MI_r(P)\to S$ defined by $\psi(\MI)=u_{\MI}$ for all $\MI\in \MI_r(P).$ One may check that 
\[
\psi(\MI)\psi(\MJ)=\psi(\MI\cap\MJ)\psi(\MI\cup\MJ)
\] for all $\MI,\MJ\in \MI_r(P);$ see also \cite[Lemma 2.1]{EHM}. This equality shows that $R_r(P)$ satisfies axiom (ASL-2). For showing 
(ASL-1), one may proceed as in Subsection~\ref{subsectHibi} and show that the standard monomials in $R_r(P)$ are distinct.  Indeed, let $\MI_1\subseteq\cdots \subseteq\MI_t$ and 
$\MJ_1\subseteq \cdots \subseteq \MJ_s$ be two chains in $\MI_r(P)$ such that 
\[
\psi(\MI_1)\cdots \psi(\MI_t)=\psi(\MJ_1)\cdots \psi(\MJ_s)
\]
which is equivalent to 
\[
\prod_{q=1}^t u_{\MI_q}=\prod_{q=1}^s u_{\MJ_q}
\]
or, more explicitly, 
\[
\prod_{q=1}^t(\prod_{k=1}^r x_{k,\MI_{q,k}\setminus \MI_{q, k-1}})=\prod_{q=1}^s(\prod_{k=1}^r x_{k,\MJ_{q,k}\setminus \MJ_{q, k-1}}).
\] From this last equality it follows 
\[
\prod_{q=1}^t(\prod_{k=1}^\ell x_{k,\MI_{q,k}\setminus \MI_{q, k-1}})=\prod_{q=1}^s(\prod_{k=1}^\ell x_{k,\MJ_{q,k}\setminus \MJ_{q, k-1}})
\] 
for all $\ell\in \{1,\ldots,r\}.$
Taking $\ell=1$ in the above equality we derive $t=s$ and $I_{q1}=J_{q1}$. Next, by inspecting the above equalities step by step for $\ell=2,\ldots,r$, we get 
$\MI_{q,k}=\MJ_{q,k}$ for all $q$ and $k.$
\end{proof}

Let $T$ be the polynomial ring in the variables $y_{\MI}$ with $\MI\in \MI_r(P)$ and $\varphi: T\to R_r(P)$ the $K$--algebra 
homomorphism induced by $y_{\MI}\mapsto u_{\MI}$ for all $\MI\in \MI_r(P)$.  Theorem~\ref{ASL general} has the following consequence.

\begin{Corollary}\label{ASLcor}
The presentation ideal of the ring $R_r(P)$ is generated by the binomials $y_{\MI}y_{\MJ}-y_{\MI\cap\MJ}y_{\MI\cup\MJ}$ where $
\MI,\MJ\in \MI_r(P)$ are incomparble $r$--multichains.
\end{Corollary}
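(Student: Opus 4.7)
The plan is to deduce Corollary~\ref{ASLcor} directly from the ASL structure on $R_r(P)$ established in Theorem~\ref{ASL general}, exploiting the general principle---cited earlier in the excerpt via \cite[Theorem 3.4]{Eis}---that the straightening relations of any ASL generate its presentation ideal. To make the argument self-contained, let $J \subseteq T$ denote the ideal generated by all binomials $f_{\MI,\MJ} = y_{\MI} y_{\MJ} - y_{\MI \cap \MJ} y_{\MI \cup \MJ}$ with $\MI, \MJ \in \MI_r(P)$ incomparable. The goal is to show that $J = \ker \varphi$.

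The inclusion $J \subseteq \ker \varphi$ is immediate from the identity $u_{\MI} u_{\MJ} = u_{\MI \cap \MJ} u_{\MI \cup \MJ}$, which was verified inside the proof of Theorem~\ref{ASL general} (see also \cite[Lemma 2.1]{EHM}). For the reverse inclusion, given $F \in \ker \varphi$, I would reduce $F$ modulo $J$ to a $K$-linear combination $G$ of standard monomials $y_{\MI_1} \cdots y_{\MI_k}$ with $\MI_1 \subseteq \cdots \subseteq \MI_k$, by iteratively replacing every incomparable sub-product $y_{\MI} y_{\MJ}$ by $y_{\MI \cap \MJ} y_{\MI \cup \MJ}$. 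Applying $\varphi$ to $F - G \in J$ yields $\varphi(G) = 0$ in $R_r(P)$, and then axiom (ASL-1) of Theorem~\ref{ASL general}---the $K$-linear independence of the standard monomials $u_{\MI_1} \cdots u_{\MI_k}$ in $R_r(P)$---forces $G = 0$, so $F \in J$.

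The main technical point is confirming termination of the rewriting procedure, so that every $F$ actually reduces to a standard form. This is handled in the same spirit as Theorem~\ref{GBHibi}: fix a linear extension of the inclusion order on $\MI_r(P)$, order the variables of $T$ accordingly by $y_\MI < y_\MJ$ whenever $\MI \subsetneq \MJ$, and take the associated reverse lexicographic order. One checks that $y_\MI y_\MJ$ is the leading term of $f_{\MI,\MJ}$ (the verification is identical to the classical case, since $\MI \cap \MJ \subsetneq \MI, \MJ \subsetneq \MI \cup \MJ$), so each rewriting step strictly decreases the leading monomial with respect to this term order. Noetherianity of the monomial order then guarantees that the reduction halts, which completes the argument.
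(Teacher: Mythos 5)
Your proof is correct and follows essentially the same route as the paper: the paper obtains the corollary immediately from Theorem~\ref{ASL general} together with the general principle that the straightening relations of an ASL generate its presentation ideal, citing \cite[Theorem 3.4]{Eis}. The only difference is that you unpack that citation by carrying out the standard reduction-to-standard-monomials argument (with the revlex termination check), which is a complete and correct substitute for the reference.
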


We fix a linear order on the variables $y_{\MI}$
such that $y_{\MI}<y_{\MJ}$ if $\MI\subset \MJ.$ Corollary~\ref{ASLcor} shows that $R_r(P)$ is the classical Hibi ring of $\MI_r(P)$, thus we get the following statement.

\begin{Theorem}\cite[Theorem 4.1]{EHM}\label{GBgeneral}
The set
\[
\MG=\{y_{\MI}y_{\MI^\prime}-y_{\MI\union \MI^\prime}y_{\MI\sect \MI^\prime}\in T : \MI,\MI^\prime\in \MI_r(P) \text{ incomparable}\},
\]
is  the  reduced Gr\"obner basis of the ideal $\Ker\varphi$ with respect to the reverse lexicographic order induced by the given order of the
variables $y_{\MI}$.
\end{Theorem}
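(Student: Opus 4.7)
The plan is to reduce this statement to the corresponding statement for classical Hibi ideals, namely Theorem~\ref{GBHibi}, via the identification of $R_r(P)$ with the classical Hibi ring of the distributive lattice $\MI_r(P)$.

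First I would recall that $\MI_r(P)$ has already been shown to be a distributive lattice with meet and join given by componentwise intersection and union of the chains in the multichain. Together with Corollary~\ref{ASLcor}, this tells us that the map $\varphi: T\to R_r(P)$, $y_{\MI}\mapsto u_{\MI}$, has kernel
\[
\Ker\varphi = \bigl(y_{\MI}y_{\MI'} - y_{\MI\cap \MI'}y_{\MI\cup \MI'} : \MI,\MI' \in \MI_r(P)\ \text{incomparable}\bigr),
\]
which is precisely the Hibi ideal $I_{\MI_r(P)}$ associated with the distributive lattice $\MI_r(P)$ (with variables $y_{\MI}$ in the role of the $x_\alpha$'s).

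Next I would check that the chosen monomial order on $T$ is of the type considered in Theorem~\ref{GBHibi}: we linearly order the variables $y_{\MI}$ so that $y_{\MI} < y_{\MJ}$ whenever $\MI \subset \MJ$ in $\MI_r(P)$, and take the reverse lexicographic order induced by this linear order. This is exactly the kind of order used in Theorem~\ref{GBHibi} applied to $L = \MI_r(P)$.

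Applying Theorem~\ref{GBHibi} to the distributive lattice $L = \MI_r(P)$ yields immediately that the generators of $I_{\MI_r(P)}$ form the reduced Gröbner basis of $I_{\MI_r(P)}$ with respect to this order; in our notation this is exactly the set $\MG$. No further work is required, since the general case of the theorem has been absorbed into the classical Hibi setting. The only potential obstacle is a bookkeeping one, namely making sure that the linear order on the multichains used here is indeed compatible with a refinement of $\subset$ on $\MI_r(P)$; this is automatic by our choice of linear order on the variables $y_{\MI}$.
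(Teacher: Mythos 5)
Your proposal is correct and matches the paper's own treatment: the paper derives Theorem~\ref{GBgeneral} exactly by noting that Corollary~\ref{ASLcor} identifies $R_r(P)$ as the classical Hibi ring of the distributive lattice $\MI_r(P)$ and then invoking Theorem~\ref{GBHibi} for the stated reverse lexicographic order.
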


\begin{Corollary}\cite[Corollary 4.2]{EHM}
The ring $R_r(P)$ is a Cohen-Macaulay normal domain.
\end{Corollary}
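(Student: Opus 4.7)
The proof will be essentially a one-paragraph reduction to the classical case already treated in Subsection~\ref{GBsubsect}. The plan is to observe that $\MI_r(P)$ is itself a finite distributive lattice (this was established at the start of Subsection~\ref{generalized}, where the join and meet of two $r$--multichains were exhibited componentwise), and that Corollary~\ref{ASLcor} identifies $R_r(P)$ with the classical Hibi ring $K[\MI_r(P)]/I_{\MI_r(P)}$ attached to this distributive lattice. Once this identification is in place, one simply invokes the theorems already used for classical Hibi rings in Subsection~\ref{GBsubsect}.

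More concretely, I would proceed as follows. First, I would note that the linear order on the variables $y_{\MI}$ refining containment of multichains is exactly the type of order required for Theorem~\ref{GBHibi} applied to the lattice $\MI_r(P)$. Hence, by Theorem~\ref{GBgeneral} (which is Theorem~\ref{GBHibi} specialized to $\MI_r(P)$), the initial ideal
\[
\ini_<(\Ker\varphi) = (y_{\MI}y_{\MI'} : \MI,\MI'\in \MI_r(P) \text{ incomparable})
\]
is a squarefree monomial ideal. Second, I would apply Sturmfels' theorem (cited in the paragraph following Theorem~\ref{GBHibi}) which says that a toric ring whose defining ideal has a squarefree initial ideal with respect to some monomial order is a normal domain; this gives normality of $R_r(P)$. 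Third, for the Cohen-Macaulay property, I would either invoke Hochster's theorem on normal toric rings, or equivalently identify $\ini_<(\Ker\varphi)$ with the Stanley--Reisner ideal of the order complex of $\MI_r(P)$, apply Björner's theorem to deduce shellability, and use the standard fact that an ideal whose initial ideal defines a Cohen-Macaulay ring is itself Cohen-Macaulay.

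There is essentially no obstacle: every step has been carried out in Subsection~\ref{GBsubsect} for an arbitrary distributive lattice $L$, and the work of this subsection (culminating in Corollary~\ref{ASLcor} and Theorem~\ref{GBgeneral}) was precisely to exhibit $R_r(P)$ as such a classical Hibi ring. The only thing to double-check is that the chain of implications (squarefree initial ideal $\Rightarrow$ normal; shellable order complex $\Rightarrow$ Cohen-Macaulay) is applied to the lattice $\MI_r(P)$ rather than to $\MI(P)$, but this is immediate since those implications hold for any finite distributive lattice.
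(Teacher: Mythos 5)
Your proposal is correct and is essentially the paper's own argument: the paper derives this corollary by noting (via Corollary~\ref{ASLcor}) that $R_r(P)$ is the classical Hibi ring of the distributive lattice $\MI_r(P)$, so that Theorem~\ref{GBgeneral} yields a squarefree initial ideal, and then the Sturmfels and Hochster (or Bj\"orner/shellability) arguments from Subsection~\ref{GBsubsect} apply verbatim. No gaps.
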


In order to have a better knowledge of $R_r(P),$ we need to identify the join-irreducible elements of $\MI_r(P).$
Let  $Q_{r-1}$ denote the set $[r-1]=\{1,\ldots,r-1\}$
 endowed with the natural order.

\begin{Theorem}\cite[Theorem 4.3]{EHM}\label{thm43}
Let $P$ be a finite poset. Then, for any  $r\geq 2,$ $R_r(P)\cong R_2(P\times Q_{r-1}).$ 
\end{Theorem}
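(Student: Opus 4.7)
The plan is to reduce the isomorphism of $K$--algebras to an isomorphism of the underlying distributive lattices on which they are built as classical Hibi rings. By Corollary~\ref{ASLcor} (together with the discussion following it), the generalized Hibi ring $R_r(P)$ coincides with the classical Hibi ring on the distributive lattice $\MI_r(P)$, and by definition $R_2(P\times Q_{r-1})$ is the classical Hibi ring on $\MI(P\times Q_{r-1})$. Thus it suffices to exhibit a lattice isomorphism
\[
\Phi : \MI_r(P) \stackrel{\sim}{\longrightarrow} \MI(P\times Q_{r-1}),
\]
because the classical Hibi relations and generating sets match up under such a bijection.

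I would construct $\Phi$ explicitly. Given a multichain $\MI: I_1\subseteq I_2\subseteq\cdots\subseteq I_{r-1}\subseteq I_r=P$, set
\[
\Phi(\MI) \;=\; \{(p,k)\in P\times Q_{r-1} : p\in I_{r-k}\}.
\]
The inverse is the map sending an order ideal $\alpha\in\MI(P\times Q_{r-1})$ to the multichain with $I_k:=\{p\in P : (p,r-k)\in\alpha\}$ for $1\leq k\leq r-1$ and $I_r=P$. The first routine verification is that $\Phi(\MI)$ really is an order ideal of $P\times Q_{r-1}$: if $(p,k)\in\Phi(\MI)$ and $(q,\ell)\leq(p,k)$, then $q\leq p$ in $P$ and $\ell\leq k$ in $[r-1]$, hence $r-\ell\geq r-k$, so $I_{r-k}\subseteq I_{r-\ell}$ and $p\in I_{r-\ell}$; since $I_{r-\ell}$ is an order ideal of $P$ and $q\leq p$, we obtain $q\in I_{r-\ell}$, i.e.\ $(q,\ell)\in\Phi(\MI)$. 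Symmetrically, the inverse assignment yields a valid multichain because the levels $\alpha_p=\{k:(p,k)\in\alpha\}$ form order ideals of $[r-1]$ that are order-reversing in $p$.

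Next I would check that $\Phi$ preserves the lattice operations, which is immediate from the definitions: unions and intersections in $\MI_r(P)$ are taken componentwise, so
\[
\Phi(\MI\cup\MJ) \;=\; \{(p,k) : p\in I_{r-k}\cup J_{r-k}\} \;=\; \Phi(\MI)\cup\Phi(\MJ),
\]
and identically $\Phi(\MI\cap\MJ)=\Phi(\MI)\cap\Phi(\MJ)$. Together with the bijection this shows that $\Phi$ is an isomorphism of distributive lattices; the bottom element $\emptyset\subseteq\cdots\subseteq\emptyset\subseteq P$ and the top element $P\subseteq\cdots\subseteq P$ map respectively to $\emptyset$ and $P\times Q_{r-1}$.

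Finally, transporting the generating monomials and the Hibi relations along $\Phi$ identifies the presentations given by Corollary~\ref{ASLcor} for $R_r(P)$ and for $R_2(P\times Q_{r-1})$, yielding the required $K$--algebra isomorphism $R_r(P)\cong R_2(P\times Q_{r-1})$. There is no serious obstacle here; the only delicate point is the reindexing $k\mapsto r-k$ in the definition of $\Phi$, which is needed to convert the increasing chain $I_1\subseteq\cdots\subseteq I_{r-1}$ into an order ideal of the product $P\times Q_{r-1}$ under the cartesian-product order.
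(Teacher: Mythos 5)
Your proof is correct, but it reaches the lattice isomorphism $\MI_r(P)\cong\MI(P\times Q_{r-1})$ by a genuinely different route than the paper. The paper also reduces the statement to an isomorphism of the underlying distributive lattices (via Corollary~\ref{ASLcor}), but it then invokes Birkhoff's theorem (Theorem~\ref{Birkoff}): it classifies the join-irreducible elements of $\MI_r(P)$ --- showing they are exactly the multichains $\emptyset\subseteq\cdots\subseteq\emptyset\subset I=\cdots=I\subset P$ with $I$ a principal order ideal of $P$ repeated $k$ times --- and proves that this subposet is isomorphic to $P\times Q_{r-1}$ by sending such a multichain to $(p,k)$, where $p$ generates $I$. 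The bulk of the paper's work is the classification of join-irreducibles (ruling out non-principal $I_k$ and multichains that are not eventually constant). You instead write down an explicit isomorphism $\Phi$ on the \emph{whole} lattices, with the reindexing $k\mapsto r-k$, and verify directly that it is a bijection preserving componentwise unions and intersections; your map is compatible with the paper's, since $\Phi$ carries the join-irreducible multichain above to the principal ideal of $P\times Q_{r-1}$ generated by $(p,k)$. Your argument is more elementary and self-contained (no appeal to Birkhoff, no classification needed); the paper's argument yields as a by-product the explicit description of the join-irreducibles of $\MI_r(P)$, though the only consequence actually used later (that the poset of join-irreducibles is $P\times Q_{r-1}$, hence pure iff $P$ is pure, as in Corollary~\ref{corgengor}) follows equally from your isomorphism. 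Both proofs are complete and correct.
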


\begin{proof}
We have to show that the poset $P^\prime$ of the join-irreducible elements of $\MI_r(P)$ is isomorphic to $P\times Q_{r-1}.$ 

In the first place we identify the join-irreducible elements of $P^\prime.$  Let
\[
\MI: \emptyset\subseteq\emptyset\subseteq\cdots\subseteq\emptyset \subset I_k\subseteq \cdots \subseteq I_r=P
\] be an $r$--multichain of $\MI_r(P).$ We claim that $\MI$ is join-irreducible if and only if $I_k$ is a join-irreducible poset ideal in 
$P$ and $I_k=I_{k+1}=\cdots= I_{r-1}.$ The if part is obvious. For the only if part, let us first assume that $I_k=J\cup J^\prime$ with 
$J,J^\prime$ poset ideals different from $I_k.$ Then, we may decompose $\MI=\MJ\cup \MJ^\prime$ where
\[
\MJ: \emptyset\subseteq\emptyset\subseteq\cdots\subseteq\emptyset \subset J\subseteq \cdots \subseteq I_r=P
\]
and
\[
\MJ^\prime: \emptyset\subseteq\emptyset\subseteq\cdots\subseteq\emptyset \subset J^\prime\subseteq \cdots \subseteq I_r=P,
\] a contradiction.

Suppose now that there exists an integer $s$ with $k\leq s<r-1$ such that $I_k=I_{k+1}=\cdots=I_s$   and $I_s\subset I_{s+1}$. Then $\MI= \MJ\union \MJ'$ where
\[
\MJ\: \emptyset \subseteq \cdots \subseteq \emptyset \subset I_k\subseteq I_{k}\subseteq \cdots \subseteq I_k\subseteq I_r
\]
and
\[
\MJ'\: \emptyset \subseteq \cdots \subseteq \emptyset \subset I_s \subseteq I_{s+1}\subseteq \cdots \subseteq I_r,
\]
a contradiction.

Let $\MI: \emptyset\subseteq \cdots\subseteq \emptyset \subset I=I=\cdots=I\subset P$ with $k$ copies of $I$ where $I$ is a join irreducible element of $\MI(P).$ Then $I$ is a principal ideal in $\MI(P),$ hence there exists a unique element $p\in I$ such that $I=\{a\in P : a\leq p\}.$
We define the  poset isomorphism between the poset $P^\prime$ of the join irreducible elements of $\MI_r(P)$ and $P\times Q_{r-1}$ as follows. To $\MI: \emptyset\subseteq \cdots \subseteq\emptyset \subset I=I=\cdots=I\subset P$ with $k$ copies of $I$ we assign $(p,k)\in P\times Q_{r-1}$.
\end{proof}

The above theorem allows us to extend Theorem~\ref{GorensteinHibi} to generalized Hibi rings. 

\begin{Corollary}\cite[Corollary 4.5]{EHM}\label{corgengor}
Let $r\geq 2$ be an integer. The ring $R_r(P)$ is Gorenstein if and only if $P$ is pure.
\end{Corollary}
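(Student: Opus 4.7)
The plan is to reduce the statement to the classical case via Theorem~\ref{thm43} and then invoke Theorem~\ref{GorensteinHibi}. By Theorem~\ref{thm43}, $R_r(P) \cong R_2(P \times Q_{r-1})$, and $R_2(P \times Q_{r-1})$ is the classical Hibi ring associated with the distributive lattice $\MI(P \times Q_{r-1})$. The poset of join-irreducible elements of this lattice is, by Birkhoff's theorem, precisely $P \times Q_{r-1}$. Hence Theorem~\ref{GorensteinHibi} applies and tells us that $R_r(P)$ is Gorenstein if and only if $P \times Q_{r-1}$ is pure.

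The remaining combinatorial step is to show that $P \times Q_{r-1}$ is pure if and only if $P$ is pure. First, I would describe the covering relations in $P \times Q_{r-1}$: since $Q_{r-1}$ is a chain, $(a,i) \lessdot (b,j)$ in $P \times Q_{r-1}$ happens precisely when either $a = b$ and $j = i+1$, or $a \lessdot b$ in $P$ and $i = j$. Consequently, any maximal chain of $P \times Q_{r-1}$ starts at some $(p_{\min},1)$ with $p_{\min}$ minimal in $P$, ends at some $(p_{\max},r-1)$ with $p_{\max}$ maximal in $P$, and its length equals the length of its projection onto $P$ (which is a maximal chain of $P$) plus $r-2$.

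From this description, if $P$ is pure of rank $n$, then every maximal chain of $P\times Q_{r-1}$ has length $n + r - 2$, so $P\times Q_{r-1}$ is pure. Conversely, if $P$ admits two maximal chains of different lengths $n \neq n'$, then these chains extend (via the first coordinate) to maximal chains of $P\times Q_{r-1}$ of distinct lengths $n + r - 2$ and $n' + r - 2$, so $P \times Q_{r-1}$ is not pure.

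No step here is a serious obstacle; the only thing one must be careful about is the identification of the covering relations in the product poset and, correspondingly, the fact that maximal chains in $P \times Q_{r-1}$ restrict to maximal chains in $P$. Once this combinatorial fact is in hand, the corollary follows immediately by combining Theorem~\ref{thm43} with the Gorenstein criterion of Theorem~\ref{GorensteinHibi}.
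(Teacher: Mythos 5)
Your proposal is correct and follows exactly the paper's route: reduce via Theorem~\ref{thm43} to the classical Hibi ring on $\MI(P\times Q_{r-1})$, apply Theorem~\ref{GorensteinHibi}, and observe that $P\times Q_{r-1}$ is pure if and only if $P$ is pure. The paper states this last observation without proof, whereas you supply the (correct) verification via the covering relations of the product with a chain; otherwise the arguments coincide.
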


\begin{proof}
By Theorem~\ref{thm43} and Theorem~\ref{GorensteinHibi}, we only need to observe that the poset $P^\prime$ of the join irreducible elements of $\MI_r(P)$ is pure if and only if $P$ is pure.
\end{proof}

\section{Level and pseudo-Gorenstein Hibi rings}
\label{pseudo}

In Theorem~\ref{GorensteinHibi} we presented the characterization of Gorenstein Hibi rings in terms of the poset $P$ of the join-irreducible elements of the lattice. In this section we study two weaker properties of $R[L].$ More precisely, we will characterize the Hibi rings which are  pseudo-Gorenstein and give necessary and sufficient conditions for levelness. This section is mainly based on  paper \cite{EHHSara}.

\subsection{Level and pseudo-Gorenstein algebras}
\label{pseudo-algebras}

Let $K$ be a field and $R$ a standard graded $K$--algebra. We assume that $R$ has the presentation $R=S/I$ where $S=K[x_1,\ldots,x_n]$ is a polynomial ring over $K$ and $I\subset S$ is a graded ideal. We also make the assumption that $R$ is Cohen-Macaulay of dimension $d.$ Let $\omega_R$ denote the canonical module of $R$ and $a=\min\{i: (\omega_R)_i\neq 0\}$.

As we have already seen in Subsection~\ref{subsectcanonical}, $R$ is Gorenstein if and only if $\omega_R$ is a cyclic $R$--module. Let
\[
\FF:\hspace{0.5cm} 0\to F_{n-d}\to \cdots \to F_1\to F_0\to R\to 0
\]
be the minimal graded free resolution of $R$ over $S.$

The notion of level rings was introduced in \cite{Sta4}.

\begin{Definition}\label{leveldefn}{\em 
The algebra $R$ is called {\em level} if all the generators of the canonical module $\omega_R$ have the same degree.}
\end{Definition}

In other words, $R$ is level if and only if the generators of $F_{n-d}$ are of  same degree. 

The following notion was introduced in \cite{EHHSara}.

\begin{Definition}\label{pseudodefn}{\em
The algebra $R$ is called {\em pseudo-Gorenstein} if $\dim_K(\omega_R)_a=1$.}
\end{Definition}

It is already clear from the above definitions that an algebra $R$ is Gorenstein if it is  level and pseudo-Gorenstein.

On the other hand, we may easily prove the following characterization of pseudo-Gorensteiness.

\begin{Proposition}\label{echivalent}
Let $R$ be a Cohen-Macaulay standard graded $K$--algebra of $\dim R=d$ and canonical module $\omega_R.$ The following statements are equivalent:
\begin{itemize}
	\item [(i)] The algebra $R$ is pseudo-Gorenstein;
	\item [(ii)] Let $\yb=y_1,\ldots,y_d$ be an $R$--regular sequence of linear forms and $\bar{R}=R/\yb R.$ Let $b=\max\{i: (\bar{R})_i\neq 0\}$. Then $\dim_K (\bar{R})_b=1.$
	\item [(iii)] Let $H_R(t)=h(t)/(1-t)^d$ be the Hilbert series of $R.$ Then, the leading coefficient of $h$ is equal to $1.$
	\item [(iv)] The highest shift $c$ in the resolution $\FF$ of $R$ over $S$ appears in $F_{n-d}$ and $\beta_{n-d,c}(R)=1.$
 \end{itemize}
\end{Proposition}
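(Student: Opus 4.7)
The plan is to establish the equivalences by running around the cycle (iii) $\Leftrightarrow$ (ii) $\Leftrightarrow$ (i) $\Leftrightarrow$ (iv), exploiting the fact that reduction by a linear regular sequence connects the canonical module and the Hilbert series of $R$ with the Artinian quotient $\bar R$.

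For (ii) $\Leftrightarrow$ (iii), I use that $\yb$ is a regular sequence of $d$ linear forms on the Cohen-Macaulay ring $R$, so $H_{\bar R}(t) = (1-t)^d H_R(t) = h(t)$. Hence $h(t)$ is the Hilbert polynomial of the Artinian ring $\bar R$, and its leading coefficient equals $\dim_K (\bar R)_b$; this gives the equivalence immediately.

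For (i) $\Leftrightarrow$ (ii), I apply Proposition~\ref{cor3.6.14} with every $a_i = 1$ to obtain $\omega_{\bar R} \cong (\omega_R / \yb \omega_R)(d)$. Because $a$ is the minimum degree of $\omega_R$, one has $(\yb \omega_R)_a = 0$, so $(\omega_R / \yb \omega_R)_a = (\omega_R)_a$. Consequently the minimum degree of $\omega_{\bar R}$ is $a - d$, with $\dim_K (\omega_{\bar R})_{a-d} = \dim_K (\omega_R)_a$. On the other hand, since $\bar R$ is Artinian, its canonical module is the graded Matlis dual $\Hom_K(\bar R, K)$, so $\dim_K (\omega_{\bar R})_j = \dim_K (\bar R)_{-j}$. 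It follows that $a - d = -b$ and $\dim_K (\omega_R)_a = \dim_K (\bar R)_b$, whence the equivalence.

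For (i) $\Leftrightarrow$ (iv), I use the dual resolution of $\omega_R$ recalled in Subsection~\ref{subsectcanonical}. Writing $F_{n-d} = \bigoplus_j S(-j)^{\beta_{n-d,j}(R)}$ and dualising the resolution of $R$ while twisting by $\omega_S = S(-n)$ shows that the minimal generators of $\omega_R$ appear in degrees $n - j$ for the $j$'s occurring in $F_{n-d}$. Setting $c_0 = \max\{j : \beta_{n-d,j}(R) \neq 0\}$, I obtain $a = n - c_0$, and since $a$ is the minimum degree, $\dim_K (\omega_R)_a = \beta_{n-d,c_0}(R)$. To finish I must identify $c_0$ with the maximum shift $c$ over the entire resolution. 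The Cohen-Macaulay identity $a = d - \reg R$ yields $c_0 = n - d + \reg R$, and the definition of regularity forces $j \leq i + \reg R$ for every $(i,j)$ with $\beta_{ij}(R) \neq 0$; thus $j \leq c_0$, with strict inequality whenever $i < n-d$. Hence $c_0 = c$ is attained only at homological position $n - d$, and (i) and (iv) say the same thing. The main obstacle to be careful about is the shift bookkeeping in the identification $\omega_{\bar R} \cong \Hom_K(\bar R, K)$: one has to verify that the twist by $d$ in the reduction formula of Proposition~\ref{cor3.6.14} lines up precisely with the Matlis duality convention so that the minimum degree of $\omega_{\bar R}$ lands at $-b$, giving the intended dimension equality.
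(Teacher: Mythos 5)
Your proof is correct and follows essentially the same route as the paper: reduction modulo a linear regular sequence together with Proposition~\ref{cor3.6.14} and Matlis duality for the Artinian quotient links (i), (ii), (iii), while duality of the resolution links (i) and (iv). The only difference is organizational — you carry out in detail the degree bookkeeping for (i)$\Leftrightarrow$(iv) (which the paper dismisses as obvious) instead of the paper's Hilbert-series additivity argument for (iv)$\Rightarrow$(iii), and both work.
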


\begin{proof}
We briefly sketch the main steps of the proof. Implication (iv)$\Rightarrow$(iii) follows immediately if we apply the additivity property of the Hilbert series to the resolution of $R.$ We get 
\[
H_R(t)=\frac{\sum_{i=0}^{n-d}(-1)^i \sum_j\beta_{ij}t^j}{(1-t)^{n}}.
\]
The leading coefficient of the numerator of $H_R(t)$ is equal to $(-1)^{n-d}$, hence, after simplifying the expression of $H_R(t)$ by $(1-t)^{n-d},$ we get the leading coefficient of $h(t)$ equal to $1.$

For (iii)$\Rightarrow$(ii), we notice that $H_{R}(t)=H_{\bar{R}}(t)/(1-t)^d,$ thus $$H_{\bar{R}}(t)=\sum_{i=0}^b\dim_K(\bar{R})_i t^i=h(t).$$
This equality leads to the desired conclusion.

Implication (ii)$\Rightarrow$(i) follows by Proposition~\ref{cor3.6.14} combined with the fact that the canonical module of $\bar{R}$ is 
$\Hom_K(\bar{R},K)$ since $\bar{R}$ is Artinian; see \cite[Theorem 3.3.7]{BHbook}.

Finally, (i)$\Rightarrow$(ii) is obvious since the resolution of $\omega_R$ is the dual of $\FF.$
\end{proof}

In the next two subsections, we will study level and pseudo-Gorenstein Hibi rings. It will turn out that  the property of $R[L]$ of being pseudo--Gorenstein or level does not depend on the field. Therefore,  we may also say that $L$ is pseudo--Gorenstein or level if the Hibi ring is so.

\subsection{Pseudo-Gorenstein Hibi rings}
\label{pseudoHibi}

Let $L$ be a distributive lattice and $P$ the subposet of its join-irreducible elements. Let $S=K[\{x_\alpha:\alpha\in L\}]$ and $I_L$ the Hibi binomial ideal associated with $L.$ As we have seen in Subsection~\ref{canonicalHibi}, the canonical ideal $\omega_L$ of  $R[L]=S/I_L$ has the minimal generators in one-to-one correspondence with the minimal elements of the poset $\MT(P)$ which consists of all 
strictly order reversing maps $v:\hat{P}\to \NN$ with $v(\infty)=0.$ It then follows that $R[L]$ is pseudo-Gorenstein if and only if $\MT(P)$ contains exactly one minimal element.

\begin{Theorem}\cite[Theorem 2.1]{EHHSara}\cite[Corollary 3.15.18 (a)]{Sta1}\label{classification}
The ring $R[L]$ is pseudo-Gorenstein if and only if, for all $x\in P,$ we have
$\depth(x)+\height(x)=\rank \hat{P}.$
\end{Theorem}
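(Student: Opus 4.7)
The plan is to translate the pseudo-Gorenstein condition into the combinatorics of $\MT(P)$ using the basis description of $\omega_L$ from Subsection~\ref{canonicalHibi}. Since the $K$-basis of $\omega_L$ is indexed by $\MT(P)$ and the basis element associated with $v$ has degree $v(-\infty)$, inequality (\ref{firstineq}) shows $a=\min\{v(-\infty):v\in\MT(P)\}=\rank\hat{P}$, with the minimum attained by $v=\depth$. Hence $R[L]$ is pseudo-Gorenstein iff there is exactly one $v\in\MT(P)$ satisfying $v(-\infty)=\rank\hat{P}$.

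The next step is to bound such a $v$ pointwise on $P$. For a fixed $x\in P$, inequality (\ref{secondineq}) gives $v(x)\geq\depth(x)$. For the upper bound, I would choose a saturated chain $-\infty\lessdot q_1\lessdot\cdots\lessdot q_\ell=x$ in $\hat{P}$ of length $\ell=\height(x)$; since $v$ is strictly order-reversing and $v(-\infty)=\rank\hat{P}$, this forces $v(x)\leq\rank\hat{P}-\height(x)$. Thus
\[
\depth(x)\ \leq\ v(x)\ \leq\ \rank\hat{P}-\height(x)\qquad\text{for every }x\in P.
\]

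If $\depth(x)+\height(x)=\rank\hat{P}$ for every $x\in P$, the two bounds coincide, forcing $v(x)=\depth(x)$ everywhere on $\hat{P}$. So $v=\depth$ is the only candidate, proving pseudo-Gorensteiness. Conversely, suppose some $x_0\in P$ satisfies $\depth(x_0)+\height(x_0)<\rank\hat{P}$. I would exhibit two distinct elements of $\MT(P)$ with value $\rank\hat{P}$ at $-\infty$: the map $\depth$ and the map $v'(y):=\rank\hat{P}-\height(y)$. Both are strictly order-reversing on $\hat{P}$ (because along any cover $y\gtrdot z$ one has $\depth(z)\geq\depth(y)+1$ and $\height(y)\geq\height(z)+1$, by extending maximal chains), both satisfy $v(\infty)=0$ and $v(-\infty)=\rank\hat{P}$, and by the choice of $x_0$ they disagree at $x_0$. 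Therefore $\dim_K(\omega_L)_{\rank\hat{P}}\geq 2$ and $R[L]$ is not pseudo-Gorenstein.

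The argument is essentially a pinching/separation dichotomy for $\MT(P)$ at its lowest degree, and no step is genuinely difficult. The only point needing care is the strict order-reversing property of $\depth$ and of $\rank\hat{P}-\height$ on all of $\hat{P}$ (as opposed to just on $P$), which I would handle by the chain-extension argument noted above. Everything else is a direct comparison of the two pointwise bounds on $v$.
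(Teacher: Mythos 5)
Your proof is correct and takes essentially the same approach as the paper: both identify the least degree of $\omega_L$ as $\rank \hat{P}$, exhibit $\depth$ and $\coheight$ as two distinct basis elements in that degree when the equality fails at some $x$, and use chain arguments to force $v=\depth$ for any $v\in\MT(P)$ with $v(-\infty)=\rank\hat{P}$ when the equality holds everywhere. Your two-sided pinching bound $\depth(x)\leq v(x)\leq \rank\hat{P}-\height(x)$ is just a repackaging of the paper's argument along a single maximal chain through $x$.
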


\begin{proof}
Let $R[L]$ be pseudo-Gorenstein. Then $\omega_L$ has a unique minimal generator of least degree which is actually $\rank \hat{P}.$ Since
the maps $\depth$ and $\coheight$  correspond to generators of degree equal to $\rank\hat{P}$, they must be equal. This leads to the desired equality.

Conversely, let us assume that for all $x\in P,$ we have $\depth(x)+\height(x)=\rank \hat{P}.$ This implies that, for any  $x\in \hat{P},$ 
there exists a chain $C$ of length equal to $\rank \hat{P}$ with $x\in C$. Let $v\in \MT(P)$ with  $v(-\infty)=\rank \hat{P}$. Then, 
for any $y\in C,$ we must have $v(y)=\depth(y)$. In particular, $v(x)=\depth(x)$. Hence, $v$ is uniquely determined which implies that $L$ 
is pseudo-Gorenstein.
\end{proof}

In Figure~\ref{exampleGor} we represent the posets $P$ for a pseudo-Gorenstein lattice which is not Gorenstein and a lattice which is not pseudo-Gorenstein.

\begin{figure}[hbt]
\begin{center}
\psset{unit=0.5cm}
\begin{pspicture}(-8,-1)(-4,4)
\rput(-12,0){
\rput(0,0){$\bullet$}
\rput(0,2){$\bullet$}
\rput(0,4){$\bullet$}
\rput(2,0){$\bullet$}
\rput(2,4){$\bullet$}
\rput(4,0){$\bullet$}
\rput(4,2){$\bullet$}
\rput(4,4){$\bullet$}
\psline(0,0)(0,4)
\psline(2,0)(2,4)
\psline(4,0)(4,4)
\psline(0,2)(2,0)
\psline(2,4)(4,2)
\rput(2,-1){Pseudo-Gorenstein}
}
\rput(0,0){
\rput(0,0){$\bullet$}
\rput(0,2){$\bullet$}
\rput(2,0){$\bullet$}
\psline(0,0)(0,2)
\rput(1,-1){Not Pseudo-Gorenstein}
}
\end{pspicture}
\end{center}
\caption{}\label{exampleGor}
\end{figure}

\subsection{Level Hibi rings}
\label{levelHibi}

The examples displayed in Figure~\ref{Hibiexample} are taken from \cite{Hibi2}. They show that it is not possible to decide the levelness of a Hibi ring only from its $h$--vector. More precisely, neither the number of components of the $h$--vector, nor its last component says anything about the level property of $R[L].$ 

\begin{figure}[hbt]
\begin{center}
\psset{unit=0.5cm}
\begin{pspicture}(-2,-2)(-4,6)
\rput(-12,0){
\pspolygon(0,0)(8,0)(8,2)(0,2)
\pspolygon(6,0)(8,0)(8,6)(6,6)
\pspolygon(4,0)(6,0)(6,4)(4,4)
\psline(2,0)(2,2)
\psline(6,4)(8,4)
\rput(4,-1.3){$h(R[L])=(1,7,9,2)$}
\rput(4,-2.5) {$R[L]$ is level}
}
\rput(-1,0){
\pspolygon(0,0)(6,0)(6,2)(0,2)
\pspolygon(4,2)(8,2)(8,4)(4,4)
\pspolygon(6,4)(8,4)(8,6)(6,6)
\psline(2,0)(2,2)
\psline(4,0)(4,2)
\psline(6,4)(6,2)
\rput(4,-1.3){$h(R[L])=(1,6,9,2)$}
\rput(4,-2.5) {$R[L]$ is not level}
}
\end{pspicture}
\end{center}
\caption{}\label{Hibiexample}
\end{figure}

We would like to make a short comment on how Figure~\ref{Hibiexample} should be interpreted. In order to be consistent with the previous pictures, we should have rotated the drawings counterclockwise with $45$ degrees. But usually, we use  representations of planar distributive lattices like in Figure~\ref{Hibiexample}  in order to  recognize easier the planar coordinates of the elements of the lattices.

\medskip

The first attempt to study the level property of a Hibi ring was done in \cite{M}. In that paper, a sufficient condition for levelness was given.

\begin{Theorem}\cite[Theorem 3.3]{M}\label{Myiazaki}
Let $L=\MI(P)$ be a distributive lattice. If the subposet $\{y\in P: y\geq x\}$ of $P$ is pure for all $x\in P,$ then $R[L]$ is level.
\end{Theorem}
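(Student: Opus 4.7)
The canonical module $\omega_{R[L]}$ has a minimal system of generators in bijection with the minimal elements of the poset $\MT(P)$, and the generator indexed by $v$ has degree $v(-\infty)$; since $v(-\infty)\geq r:=\rank\hat P$ always, levelness of $R[L]$ is equivalent to the statement that every minimal $v\in\MT(P)$ satisfies $v(-\infty)=r$. My plan is to produce, for an arbitrary $v\in\MT(P)$, an element $v^{\prime}\leq v$ in $\MT(P)$ with $v^{\prime}(-\infty)=r$; a minimal $v$ is then forced to coincide with $v^{\prime}$, so every minimal generator of $\omega_{R[L]}$ lies in degree $r$.

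The first step is to notice that Miyazaki's hypothesis turns $\depth$ into a rank function on $P\cup\{\infty\}$: if $y\gtrdot x$ is a cover in $P$, the purity of $\{z\in P:z\geq x\}$ lets one extend any maximal chain of $\{z\in P:z\geq y\}$ by $x\lessdot y$ to a maximal chain of $\{z\in P:z\geq x\}$, so $\depth(y)=\depth(x)-1$; and the covers $p\lessdot\infty$ trivially satisfy the same identity. Summing along saturated chains then gives, for every $v\in\MT(P)$ and every $p\leq q$ in $P\cup\{\infty\}$, the inequality $v(p)-v(q)\geq\depth(p)-\depth(q)$, i.e.\ $v-\depth$ is weakly order-reversing on $P\cup\{\infty\}$. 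This identity fails in general at the covers $-\infty\lessdot q$ of $\hat P$, which is precisely why Theorem~\ref{GorensteinHibi} does not apply directly.

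Setting $c:=v(-\infty)-r\geq 0$, I would then define $\alpha\colon\hat P\to\NN$ by $\alpha(-\infty)=c$, $\alpha(\infty)=0$, and $\alpha(p)=\min\bigl(c,\,v(p)-\depth(p)\bigr)$ for $p\in P$. One checks readily that $\alpha\geq 0$, that $\alpha$ is weakly order-reversing (using the previous paragraph together with $\alpha(-\infty)=c\geq\alpha(p)$ for $p\in P$), and that $(v-\alpha)(-\infty)=r$. The remaining point is that $v^{\prime}:=v-\alpha$ is still strictly order-reversing, and this is the main obstacle. On a cover $p\lessdot q$ inside $P\cup\{\infty\}$, the elementary bound $\alpha(p)-\alpha(q)\leq(v(p)-\depth(p))-(v(q)-\depth(q))$ yields $v^{\prime}(p)-v^{\prime}(q)\geq\depth(p)-\depth(q)=1$; for a cover $-\infty\lessdot q$, a short case split on whether $v(q)-\depth(q)\geq c$ gives $v^{\prime}(-\infty)-v^{\prime}(q)\geq r-\depth(q)\geq 1$, using $\depth(q)\leq r-1$ for $q\in P$. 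This is exactly where the purity hypothesis is used, through the rank-function property of $\depth$ on $P\cup\{\infty\}$. By construction $v^{\prime}\leq v$ in $\MT(P)$ with $v^{\prime}(-\infty)=r$, which closes the argument.
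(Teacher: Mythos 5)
Your argument is correct and is essentially the paper's own proof: your $v'=v-\alpha$ with $\alpha(p)=\min\bigl(c,\,v(p)-\depth(p)\bigr)$ is identically the function $v_0(p)=\max\bigl\{\depth(p),\,v(p)-v(-\infty)+\rank\hat P\bigr\}$ constructed there, and both verifications hinge on the same key consequence of the hypothesis, namely $\depth(x)=\depth(y)+1$ for every cover $y\gtrdot x$. The only quibble is cosmetic: on a cover $-\infty\lessdot q$ with $v(q)-\depth(q)\geq c$ the bound you actually obtain is $v'(-\infty)-v'(q)=v(-\infty)-v(q)\geq 1$ rather than $\geq \rank\hat P-\depth(q)$, which changes nothing.
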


\begin{proof}
We have to prove that all the minimal elements of $\MT(P)$ have the same degree, namely $\rank\hat{P}.$ Thus, it suffices to show that for any $v\in \MT(P)$ there exists $v_0\in \MT(P)$ with $v_0(-\infty)=\rank\hat{P}$ such that $v-v_0\in \MS(P),$ that is, $v\geq v_0$ 
in $\MT(P).$ 

Let $v\in \MT(P)$ and define $v_0:\hat{P}\to \NN$ by
\[
v_0(x)=\max\{\depth x, \rank\hat{P}-v(-\infty)+v(x)\}
\] for all $x\in P.$ Clearly, $v_0(-\infty)=\rank \hat{P}.$ We have to show that for any $y\gtrdot x$ in $\hat{P},$ we have 
$v_0(y)<v_0(x)$ and $v(y)-v_0(y)\leq v(x)-v_0(x).$ We first observe that our hypothesis implies that $\depth x=\depth y+1.$

If $v_0(y)=\rank\hat{P}-v(-\infty)+v(y),$ then $v_0(y)<\rank\hat{P}-v(-\infty)+v(x)\leq v_0(y).$ If $v_0(y)=\depth y,$ then 
$v_0(y)<\depth x\leq v_0(x).$

For the  second inequality, let us first take  $$v_0(x)=\depth x>\rank\hat{P}-v(-\infty)+v(x).$$ We get 
\[
\rank\hat{P}-v(-\infty)+v(y)<\rank\hat{P}-v(-\infty)+v(x)\leq \depth x-1=\depth y,
\] which implies that $v_0(y)=\depth y.$ Therefore, the inequality $v(y)-v_0(y)\leq v(x)-v_0(x)$ is equivalent to $$v(y)-v(x)\leq \depth y-\depth x=-1,$$ which is obviously true. Now, let $$v_0(x)=\rank\hat{P}-v(-\infty)+v(x).$$ It follows that 
\[v_0(y)-v_0(x)\geq (\rank\hat{P}-v(-\infty)+v(y))-(\rank\hat{P}-v(-\infty)+v(x))=v(y)-v(x)
\] which leads to the desired inequality.
\end{proof}

\begin{Remark}{\em 
By duality, one gets another sufficient condition for the levelness of the Hibi ring $R[L]:$ If  the subposet $\{y\in P: y\leq x\}$ of $P$ is pure for all $x\in P,$ then $R[L]$ is level.
}
\end{Remark}

In Figure~\ref{NotMy} is displayed a poset $P$ which  shows that neither the condition given in Theorem~\ref{Myiazaki} nor its dual is 
necessary for levelness. One may easily show that $R[\MI(P)]$ is level either directly, by computing the minimal elements of $\MT(P),$ or by using a computer to find the resolution of $R[\MI(P)]$. However, the poset does not satisfy any of the sufficient conditions of being level.

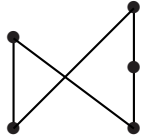
\begin{figure}[hbt]
\begin{center}
\psset{unit=0.8cm}
\begin{pspicture}(-5,2)(4,4)

\rput(-1.5,2){
\psline(0,0)(0,1.5)
\rput(0,0){$\bullet$}
\rput(0,1.5){$\bullet$}

\psline(2,0)(2,2)
\rput(2,0){$\bullet$}
\rput(2,1){$\bullet$}
\rput(2,2){$\bullet$}
\psline(0,0)(2,2)
\psline(0,1.5)(2,0)
}
\end{pspicture}
\end{center}
\caption{Butterfly poset}\label{NotMy}
\end{figure}

\begin{Remark}{\em 
In \cite{EHHSara}, the poset of Figure~\ref{NotMy} is called a {\em butterfly poset}. One may consult \cite{EHHSara} for more properties of butterfly posets.
}
\end{Remark}

A necessary condition for levelness was given in \cite{EHHSara}.

\begin{Theorem}\cite[Theorem 4.1]{EHHSara}
\label{alsoviviana}
Suppose   $L$ is level.  Then
\begin{eqnarray}\label{neccond}
\label{inequality}
\height(x)+\depth(y)  \leq \rank \hat{P}+1
\end{eqnarray}
for all $x,y\in P$ with  $x\gtrdot y$.
\end{Theorem}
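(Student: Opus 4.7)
The plan is to argue by contradiction: suppose $L$ is level, yet some cover $x \gtrdot y$ in $P$ satisfies $a + b \geq \rank\hat{P} + 2$, where $a = \height(x)$ and $b = \depth(y)$. The discussion in Subsection~\ref{canonicalHibi} identifies the minimal generators of $\omega_L$ with the minimal elements $v$ of $\MT(P)$, the corresponding generator having degree $v(-\infty)$. Since $\depth \in \MT(P)$ is always a minimal element of degree $\rank\hat{P}$, levelness amounts to the assertion that every minimal element $v \in \MT(P)$ satisfies $v(-\infty) = \rank\hat{P}$. Thus it is enough to exhibit a minimal element of $\MT(P)$ with $v(-\infty) > \rank\hat{P}$. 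Before constructing it, I record that $\height(x)+\depth(x)\leq \rank\hat{P}$ combined with the hypothesis yields $\depth(y)-\depth(x) \geq 2$.

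My candidate is
\[
v(z) = \max\bigl(\depth(z),\ (b-1) + d(z, x)\bigr) \quad \text{for } z \leq x,
\]
and $v(z) = \depth(z)$ otherwise, where $d(z, x)$ is the length of the longest chain from $z$ to $x$ in $\hat{P}$. A routine case analysis on covers $p \lessdot q$ in $\hat{P}$ (splitting on whether each endpoint lies $\leq x$) shows $v$ is strictly order reversing, so $v \in \MT(P)$. Evaluating along a saturated chain realizing $\height(x)=a$ shows the ``$(b-1)+d(\cdot,x)$''-term strictly dominates $\depth$, giving $v(-\infty) = a + b - 1 \geq \rank\hat{P} + 1$.

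The main obstacle is to verify that this $v$ is itself a minimal element of $\MT(P)$. For this I plan to invoke the standard characterization: $v$ fails to be minimal iff there exists a non-empty order ideal $A \subseteq \hat{P} \setminus \{\infty\}$ with $v - \chi_A \in \MT(P)$, which in turn is equivalent to $A$ being closed under the \emph{tight} covers of $v$ (those covers $p \lessdot q$ with $v(p)-v(q)=1$), since a tight boundary cover would ruin the strict order-reversal of $v-\chi_A$. I then fix a saturated chain $-\infty = z_0 \lessdot z_1 \lessdot \cdots \lessdot z_a = x$ realizing $\height(x)$, and a saturated chain $y = w_0 \lessdot w_1 \lessdot \cdots \lessdot w_b = \infty$ realizing $\depth(y)$. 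The formula for $v$ makes each $z_i \lessdot z_{i+1}$ tight (since $(b-1)+d(z_i,x) = a+b-1-i$ strictly dominates $\depth(z_i) \leq \rank\hat{P}-i$). Using $y \lessdot x$ together with $\depth(x) \leq b-2$, I will argue that each $w_j$ ($j \geq 1$) is incomparable to $x$: the case $w_1 = x$ forces $\depth(x)=b-1$, contradicting $\depth(x) \leq b-2$; the cases $w_1 < x$ or $w_1 > x$ both contradict a cover relation (either $w_1$ lies strictly between $y$ and $x$, or $x$ lies strictly between $y$ and $w_1$). Consequently $v(w_j) = \depth(w_j) = b - j$ for $j\geq 1$ and $v(w_0)=v(y)=b$, so every $w_j \lessdot w_{j+1}$ is tight. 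Assuming such an $A$ existed, the order-ideal property forces $-\infty \in A$; tightness along the $z$-chain then forces $x \in A$; the order-ideal property applied to $y < x$ forces $y \in A$; and tightness along the $w$-chain forces $\infty \in A$, contradicting $\infty \notin A$. Hence $v$ is a minimal element of $\MT(P)$ with $v(-\infty) > \rank\hat{P}$, completing the contradiction with levelness.
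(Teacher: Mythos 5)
Your proof is correct, but it takes a genuinely different route from the paper's. The paper also defines a witness function with large value at $-\infty$ (namely $v(z)=\depth(z)+\alpha$ for $z\leq x$, $z\neq y$, with $\alpha=\depth(y)-\depth(x)-1$), but it never proves that witness is minimal: instead it splits into two cases, observing that if $v$ is not minimal then any minimal $w$ with $v-w\in\MS(P)$ is squeezed by $0\leq v(x)-w(x)\leq v(y)-w(y)\leq 0$ into satisfying $w(x)=\depth(y)-1$, after which a saturated chain from $x$ down to $-\infty$ forces $w(-\infty)\geq \height(x)+\depth(y)-1>\rank\hat{P}$. You instead build a different (pointwise smaller) witness and prove its minimality directly, via the characterization that $v$ fails to be minimal iff $v-\chi_A\in\MT(P)$ for some nonempty order ideal $A\not\ni\infty$, i.e.\ iff there is such an $A$ closed under tight covers. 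That buys you an explicit minimal generator of $\omega_L$ in too-high degree, at the cost of more bookkeeping. Two points deserve attention. First, the direction of the characterization you actually need (``not minimal $\Rightarrow$ such an $A$ exists'') is invoked as ``standard'' but should be justified: if $w<v$ in $\MT(P)$, take $A=\{p:(v-w)(p)\geq 1\}$, which is a down-set by order-reversal of $v-w$, and for a boundary cover $p\gtrdot q$ with $q\in A$, $p\notin A$ one gets $v(p)=w(p)\leq w(q)-1\leq v(q)-2$, so $v-\chi_A$ is still strictly order reversing. Second, your claim that each $w_j$ ($j\geq 1$) is \emph{incomparable} to $x$ is slightly too strong (for $j\geq 2$ one can have $w_j>x$); what you need and what your cover argument actually establishes is only $w_j\not\leq x$, which already gives $v(w_j)=\depth(w_j)=b-j$. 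With those two repairs the argument is complete.
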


\begin{proof}
Let $x,y\in P$ such that $x\gtrdot y$ and suppose that $\height(x)+\depth(y)  > \rank \hat{P}+1$. We have to show that $L$ is not level.

By our  assumption we get
\[
\height(x)+\depth(y)  > \rank \hat{P}+1\geq \height(x)+\depth(x)+1,
\]
and hence
\[
\depth(y)>\depth(x)+1.
\]
We show that there exists a minimal element $w\in \MT(P)$ with $w(-\infty)>\rank \hat{P}$. This then proves that $L$ is not level.

Let $\depth(y)-\depth(x)-1=\alpha.$ Then $\alpha>0$. We define $v\: \hat{P}\to \NN$ as follows:

\[
v(z)= \left\{ \begin{array}{ll}
       \depth(z)+\alpha, & \;\textnormal{if $x\geq z$, $z\neq y$}, \\ \depth(z), & \;\text{otherwise.}
        \end{array} \right.
\]
Then $v\in \MT(P)$. If $v\in \MT(P)$ is minimal, then we are done, since
\[
v(-\infty)= \depth(-\infty) +\alpha = \rank \hat{P} +\alpha\geq \rank \hat{P}+1.
\]
The last inequality follows from the fact that $\alpha>0$.

On the other hand, if $v$ is not  minimal  in $\MT(P)$, then   there exists a minimal element  $w\in\MT(P)$ with $v-w\in \MS(P)$. It follows that
\[
0\leq v(x)-w(x)\leq v(y)-w(y)=\depth(y)-w(y)\leq 0.
\]
Hence
\[
w(x)=v(x)= \depth(x)+\alpha= \depth(x)+\depth(y)-\depth(x)-1=\depth(y)-1.
\]
Let
\[
x=z_0>z_1>\cdots > z_k=-\infty
\]
be a chain whose length is $\height(x)$. Then
\[
w(x)<w(z_1)<\cdots < w(z_k)=w(-\infty),
\]
which implies that
\[
w(-\infty)\geq w(x)+\height(x)=(\depth(y)-1)+\height(x) > \rank \hat{P}.
\]
\end{proof}

In the next subsection, we will see that, for a class of planar lattices, condition (\ref{neccond}) is also sufficient for the level property of the Hibi ring.

\subsection{Regular hyper--planar lattices}
\label{subsecthyper}

Hyper-planar lattices generalize the planar lattices. They were introduced in \cite{EHHSara}.

\begin{Definition}\label{hyperdefn}{\em
Let $L$ be a finite distributive lattice and $P$ its poset of join-irreducible elements. The lattice  $L$ is called a  {\em hyper-planar lattice}, if $P$ as a set is the disjoint union of chains $C_1,\ldots, C_d$,  where each $C_i$ is a maximal chain in $P$.  We call such a chain decomposition {\em canonical}.
}
\end{Definition}

For $d=2,$ we recover simple planar lattices.

A canonical chain decomposition of the poset $P$ of join-irreducible elements for a hyper-planar lattice $L$ is, in general, not  uniquely determined. However, if $C_1\union C_2\union\cdots \union C_s$ and $D_1\union D_2\union \cdots \union D_t$ are canonical chain decompositions of $P$, then $s=t$.
Indeed, let  $\max(Q)$ denote the set of maximal elements of a finite poset $Q$. Then
\begin{eqnarray}
\label{max}
\max(P)&=&\max(C_1)\union \max(C_2)\union\cdots \union \max(C_s)\\
&=&\max(D_1)\union \max(D_2)\union\cdots \union \max(D_t).\nonumber
\end{eqnarray}
Let $\max(C_i)=\{x_i\}$ for $i=1,\ldots,s$ and $\max(D_i)=\{y_i\}$ for $i=1,\ldots,t$. Then the elements $x_i$ as well as the elements $y_i$ are pairwise distinct, and  it follows from (\ref{max}) that
\[
\{x_1,x_2,\ldots,x_s\}=\{y_1,y_2,\ldots,y_t\},
\]
Hence, t follows that $s=t$.

One would even  expect the equality
\begin{eqnarray}
\label{equal}
\{\ell(C_1),\ell(C_2),\ldots, \ell(C_s)\} =\{\ell(D_1),\ell(D_2),\ldots,\ell(D_t)\},
\end{eqnarray}
as multisets. However, this is not the case. The poset $P$ displayed in  Figure~\ref{different} has the following two canonical chain decompositions
\[
C_1=a<b<c<d<e<f, \quad C_2=g<h<i<j<k<l,
\]
and
\[
D_1=a<b<i<e<f, \quad D_2=g<h<c<d<j<k<l.
\]

We have  $\ell(C_1)=\ell(C_2)=5$, while  $\ell(D_1)=4$ and $\ell(D_2)=6$.

\begin{figure}[hbt]
\begin{center}
\psset{unit=0.8cm}
\begin{pspicture}(-5,0)(4,6)

\rput(-1.5,0){
\psline(0,0)(0,5)
\rput(0,0){$\bullet$}
\rput(0,1){$\bullet$}
\rput(0,2){$\bullet$}
\rput(0,3){$\bullet$}
\rput(0,4){$\bullet$}
\rput(0,5){$\bullet$}
\psline(2,0)(2,6)
\rput(2,0){$\bullet$}
\rput(2,1){$\bullet$}
\rput(2,2.5){$\bullet$}
\rput(2,4){$\bullet$}
\rput(2,5){$\bullet$}
\rput(2,6){$\bullet$}
\psline(0,1)(2,2.5)
\psline(0,4)(2,2.5)
\psline(0,3)(2,4)
\psline(0,2)(2,1)
\rput(-0.3,0){$a$}
\rput(-0.3,1){$b$}
\rput(-0.3,2){$c$}
\rput(-0.3,3){$d$}
\rput(-0.3,4){$e$}
\rput(-0.3,5){$f$}
\rput(2.3,0){$g$}
\rput(2.3,1){$h$}
\rput(2.3,2.5){$i$}
\rput(2.3,4){$j$}
\rput(2.3,5){$k$}
\rput(2.3,6){$l$}
}
\end{pspicture}
\end{center}
\caption{}\label{different}
\end{figure}

In order to guarantee that  equality (\ref{equal}) is  satisfied we have to add an extra condition on the hyper-planar lattice. 

\begin{Definition}\label{regulardefn}{\em 
The lattice $L=\MI(P)$ is called  {\em regular hyper-planar}, if, for any canonical chain decomposition $C_1\cup C_2\cup\ldots\cup C_d$ of $P$, and for all $x<y$ with $x\in C_i$ and $y\in C_j$ it follows that $\height_{C_i}(x)<\height_{C_j}(y)$.
}
\end{Definition}

In the next corollary we give some properties of regular hyper-planar lattices. First we need the following result. 

\begin{Lemma}\cite[Lemma 3.1]{EHHSara}
\label{regular}
Let $L$ be a regular hyper-planar lattice and $C_1\cup \ldots \cup C_d$  a canonical chain decomposition of $P$. Then, for all $i$ and $x\in C_i,$ we have $\height_{C_i}(x)=\height_P(x)$.
\end{Lemma}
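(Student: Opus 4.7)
The plan is to prove the two inequalities $\height_{C_i}(x) \leq \height_P(x)$ and $\height_P(x) \leq \height_{C_i}(x)$ separately; the former is essentially tautological, while the latter is where the regularity hypothesis is used.

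For the first inequality, I would observe that the set $\{y \in C_i : y \leq x\}$ is a chain in $P$ (as it is a sub-chain of $C_i$) ending at $x$, and its length is precisely $\height_{C_i}(x)$. Since $\height_P(x)$ is, by definition, the maximum length of a chain in $P$ with top element $x$, we immediately get $\height_{C_i}(x) \leq \height_P(x)$.

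For the reverse inequality, I would take any chain $y_0 < y_1 < \cdots < y_k = x$ realizing $\height_P(x)$, so that $k = \height_P(x)$. Each $y_j$ lies in some chain $C_{i_j}$ of the canonical decomposition, with $y_k = x \in C_i$. Applying the regularity condition of Definition~\ref{regulardefn} to each consecutive pair $y_j < y_{j+1}$, I get
\[
\height_{C_{i_0}}(y_0) < \height_{C_{i_1}}(y_1) < \cdots < \height_{C_{i_k}}(y_k) = \height_{C_i}(x).
\]
Since this is a strictly increasing sequence of non-negative integers of length $k+1$, the top term satisfies $\height_{C_i}(x) \geq k = \height_P(x)$, which completes the proof.

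There is no real obstacle here: the argument is forced once one unpacks the definition of regularity, which is precisely the statement needed to compare heights across different chains of the canonical decomposition. The only mild subtlety is noticing that the regularity condition, which looks like a hypothesis about one pair of comparable elements from different chains, iterates along a saturated chain to give the needed height estimate.
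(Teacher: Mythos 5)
Your proof is correct and is essentially the paper's argument in unrolled form: the paper proves the same two inequalities, obtaining the lower bound by induction on $\height_P(x)$ via a single application of the regularity condition to a covering pair $x\gtrdot y$, which is exactly what your iteration of the regularity inequality along a maximal chain below $x$ accomplishes. No gaps.
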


\begin{proof}
We apply induction on $\height_P(x)$. If $\height_P(x)=0$, then there is nothing to show. Assume that $\height_P(x)>0$ and let $y\in P$ with $x\gtrdot y$ with $\height_P(y)=\height_P(x)-1$. Let us assume that  $y\in C_j$. Since $\height_P(y)=\height_P(x)-1,$ by the inductive hypothesis we  obtain
\[
\height_P(x)-1=\height_P(y)=\height_{C_j}(y)<\height_{C_i}(x)\leq \height_P(x).
\]
This yields the desired conclusion.
\end{proof}

\begin{Corollary}\cite[Corollary 3.2]{EHHSara}
\label{ell}
Let $L$ be a regular hyper-planar lattice with the distinct canonical chain decompositions $C_1\cup \ldots \cup C_d$ and $D_1\union D_2\union \cdots \union D_d$  of $P$. Then
\begin{enumerate}
\item[{\em (a)}] $\{\ell(C_1),\ell(C_2),\ldots, \ell(C_d)\} =\{\ell(D_1),\ell(D_2),\ldots,\ell(D_d)\}$,
as multisets.
\item[{\em (b)}]
$\rank P=\max\{\ell(C_1),\ldots,\ell(C_d)\}$.
\item[{\em (c)}]
$\height (x)+\depth (x)=\rank \hat{P}$ for all $x\in C_i$ with $\ell(C_i)=\rank P$.
\end{enumerate}
\end{Corollary}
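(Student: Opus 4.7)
The plan is to lean on Lemma~\ref{regular} throughout, together with the observation (recorded in the paragraphs preceding Definition~\ref{regulardefn}) that for any canonical chain decomposition $C_1\union\cdots\union C_d$ of $P$, the set $\{\max(C_i) : i=1,\ldots,d\}$ equals $\max(P)$ and is therefore independent of the decomposition.

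For (a) and (b), the first step is to rewrite $\ell(C_i)$ in a decomposition-free way. Since each $C_i$ is a chain with top element $m_i := \max(C_i)$, Lemma~\ref{regular} gives $\ell(C_i) = \height_{C_i}(m_i) = \height_P(m_i)$. Hence the multiset $\{\ell(C_1),\ldots,\ell(C_d)\}$ coincides with $\{\height_P(m) : m \in \max(P)\}$, which is intrinsic to $P$; this yields (a). For (b), any chain of $P$ of maximum length must terminate at a maximal element (otherwise one could extend it upward), so $\rank P = \max\{\height_P(m) : m\in\max(P)\} = \max_i \ell(C_i)$.

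For (c), fix $x\in C_i$ with $\ell(C_i) = \rank P$. Because $\rank\hat{P} = \rank P + 2$ and $\height_{\hat{P}}(x) = \height_P(x)+1$, $\depth_{\hat{P}}(x) = \depth_P(x)+1$ for $x\in P$, the assertion reduces to $\height_P(x) + \depth_P(x) = \rank P$. Lemma~\ref{regular} applied inside $C_i$ already supplies $\height_P(x) + \depth_{C_i}(x) = \ell(C_i) = \rank P$, so it is enough to prove $\depth_P(x) = \depth_{C_i}(x)$. The inequality $\geq$ is immediate, so I would take an arbitrary ascending chain $x = z_0 < z_1 < \cdots < z_k$ in $P$ with $z_a\in C_{i_a}$ and show $k \leq \depth_{C_i}(x)$. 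Regularity (together with the obvious same-chain case) gives $\height_{C_{i_a}}(z_a) + 1 \leq \height_{C_{i_{a+1}}}(z_{a+1})$ for each $a$; telescoping and invoking Lemma~\ref{regular} at the bottom end produces
\[
\height_P(x) + k \;\leq\; \height_{C_{i_k}}(z_k) \;\leq\; \ell(C_{i_k}) \;\leq\; \rank P \;=\; \height_P(x) + \depth_{C_i}(x),
\]
whence $k\leq \depth_{C_i}(x)$ as required.

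The genuine technical work is concentrated in the last step of (c); everything else is short bookkeeping on the shared set of maximal elements of $P$. The key point there is that the regularity hypothesis of Definition~\ref{regulardefn} is strong enough to be iterated along an arbitrary (not necessarily saturated) ascending chain that crosses between distinct chains of the decomposition, which is precisely what lets me compare an upward path in $P$ against the length of the single chain $C_i$.
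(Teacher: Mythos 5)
Your argument for (a) and (b) is the same as the paper's: both reduce $\ell(C_i)$ to $\height_P(\max(C_i))$ via Lemma~\ref{regular} and then use the decomposition-independence of $\max(P)$, so the multiset of lengths is intrinsic and its maximum is $\rank P$. For (c) you take a genuinely different (and correct) route. The paper's proof is a one-line squeeze: since $\ell(C_i)=\rank P$, the chain $\hat{C_i}$ has length $\rank\hat{P}$, so
$\rank\hat{P}=\height_{\hat{C_i}}(x)+\depth_{\hat{C_i}}(x)\leq \height(x)+\depth(x)\leq \rank\hat{P}$,
where the last inequality is just the concatenation of a chain below $x$ with one above $x$. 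Notably, this uses no regularity at all beyond the hypothesis $\ell(C_i)=\rank P$. You instead reduce to $\depth_P(x)=\depth_{C_i}(x)$ and prove the nontrivial inequality by telescoping the regularity condition $\height_{C_{i_a}}(z_a)<\height_{C_{i_{a+1}}}(z_{a+1})$ along an arbitrary ascending chain out of $x$; this works, but the same bound $\height_P(x)+k\leq\rank P$ follows immediately by prepending to your chain a chain of length $\height_P(x)$ below $x$, with no appeal to Definition~\ref{regulardefn}. So your proof buys nothing extra here and spends the regularity hypothesis where the paper shows it is not needed; on the other hand, your telescoping observation is a reusable fact about regular decompositions (it is essentially the inductive step of Lemma~\ref{regular} iterated along non-saturated chains), so it is not wasted effort, just a longer path to the same place.
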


\begin{proof}
Let $\max(C_i)=\{x_i\}$ and $\max(D_i)=\{y_i\}$ for $i=1,\ldots,t$. We have alreday seen that
\[
\{x_1,x_2,\ldots,x_d\}=\{y_1,y_2,\ldots,y_d\},
\]
Therefore, the sets $$\{\height_P(x_1),\height_P(x_2),\ldots,\height_P(x_d)\}$$ and $$\{\height_P(y_1),\height_P(y_2),\ldots,\height_P(y_d)\}$$ are equal as multi-sets. By
Lemma~\ref{regular}, $\height_{P}(x_i)=\ell(C_i)$ and  $\height_{P}(y_i)=\ell(D_i)$. On the other hand, $\rank P=\max\{\height_P(x_1),\height_P(x_2),\ldots,\height_P(x_d)\}$. Then we have  proved (a) and (b).

In order to prove (c), we observe that
\begin{eqnarray*}
\rank \hat{P}&=&\ell(\hat{C_i})=\height_{\hat{C_i}}(x)+\depth_{\hat{C_i}}(x)\\
&\leq & \height (x)+\depth (x)\leq \rank \hat{P}.
\end{eqnarray*}
\end{proof}

In the next theorem we present the  characterization of the regular hyper-planar lattices which are  pseudo-Gorenstein.

\begin{Theorem}\cite[Theorem 3.3]{EHHSara}
\label{equallength}
Let $L$ be a regular hyper-planar lattice and $C_1\cup \ldots \cup C_d$  a canonical chain decomposition of $P$. Then $L$ is pseudo-Gorenstein if and only if all $C_i$ have the same length.
\end{Theorem}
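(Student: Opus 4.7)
The plan is to use Theorem~\ref{classification}, which tells us that $R[L]$ is pseudo-Gorenstein exactly when $\height(x) + \depth(x) = \rank \hat{P}$ for every $x \in P$, and then combine it with the structural information about regular hyper-planar lattices given in Corollary~\ref{ell}.

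For the ``if'' direction, assume all the chains $C_i$ in a canonical chain decomposition have the same length $\ell$. By Corollary~\ref{ell}(b), this common length equals $\rank P$. Every element $x \in P$ lies in some $C_i$, and $\ell(C_i) = \rank P$ for all $i$, so Corollary~\ref{ell}(c) gives $\height(x) + \depth(x) = \rank \hat{P}$ for all $x \in P$. By Theorem~\ref{classification}, $L$ is pseudo-Gorenstein.

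For the ``only if'' direction, suppose $L$ is pseudo-Gorenstein, so the equality $\height(x) + \depth(x) = \rank \hat{P}$ holds for every $x \in P$. Fix $i$ and let $x_i$ be the maximal element of $C_i$. Because each $C_i$ is a \emph{maximal} chain in $P$, the element $x_i$ is in fact maximal in $P$; consequently the subposet of $\hat{P}$ consisting of elements $\geq x_i$ is just the two-element chain $\{x_i, \infty\}$, giving $\depth(x_i) = 1$. On the other hand, Lemma~\ref{regular} yields $\height_P(x_i) = \height_{C_i}(x_i) = \ell(C_i)$, and adjoining $-\infty$ to a maximal chain in $P$ ending at $x_i$ shows $\height(x_i) = \ell(C_i) + 1$. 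Putting these together,
\[
\ell(C_i) + 2 = \height(x_i) + \depth(x_i) = \rank \hat{P} = \rank P + 2,
\]
so $\ell(C_i) = \rank P$ for every $i$, which gives the desired conclusion.

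The main conceptual step is the computation of $\height(x_i)$ and $\depth(x_i)$ for the maximum $x_i$ of each $C_i$; once one recognizes that maximality of the chain $C_i$ in $P$ forces $x_i$ to be maximal in $P$, and invokes Lemma~\ref{regular} to transfer heights from $C_i$ to $P$, everything falls into place. No subtle issue beyond confirming these two identifications is expected, and no additional combinatorial construction is required.
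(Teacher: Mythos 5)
Your proof is correct and follows essentially the same route as the paper: both directions rest on Theorem~\ref{classification} together with Lemma~\ref{regular} and Corollary~\ref{ell}, and the key computation in the converse is the evaluation of $\height$ and $\depth$ at $\max(C_i)$. The only cosmetic difference is that you argue the converse directly via the equality $\height(x)+\depth(x)=\rank\hat{P}$, whereas the paper argues contrapositively by exhibiting the two order reversing functions $\depth$ and $\rank\hat{P}-\height$ and showing they disagree at $\max(C_i)$; these amount to the same calculation.
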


\begin{proof}
Suppose all the chains $C_i$ have the same length. Then Corollary~\ref{ell} implies that $\ell(C_i)=\rank \hat{P}$ for all $i$. Let $x\in P$. Then $x\in C_i$ for some $i$, and hence $\height (x)+\depth (x)=\rank \hat{P}$, by Corollary~\ref{ell}. Therefore, by Theorem~\ref{classification}, $L$ is pseudo-Gorenstein.

Conversely, suppose that not all $C_i$ have the same length. Then Corollary~\ref{ell} implies that there exists  one $C_i$  with $\ell(C_i)<\rank P$. As in the proof of Theorem~\ref{classification} we consider the strictly order reversing function $v(x)=\depth (x)$ and $v'(x)=\rank \hat{P}-\height (x)$. Let $x=\max(C_i)$. Then $v(x)=1$ and, since $L$ is regular, $v'(x)=\rank \hat{P}-(\ell(C_i)+1)>\rank \hat{P}-\rank P-1=1$. This shows that $L$ is not pseudo-Gorenstein.
\end{proof}

\begin{Examples}{\em 
1. For the poset $P$ from Figure~\ref{expseudo}, the latice $L=\MI(P)$ is pseudo-Gorenstein since $P$ satisfies the condition of Theorem~\ref{equallength} and it is not Gorenstein since $P$ is not pure. 
\begin{figure}[hbt]
\begin{center}
\psset{unit=0.6cm}
\begin{pspicture}(-2,0)(4,4)
\rput(0,0){$\bullet$}
\rput(0,2){$\bullet$}
\rput(0,4){$\bullet$}
\rput(2,0){$\bullet$}
\rput(2,2){$\bullet$}
\rput(2,4){$\bullet$}
\psline(0,0)(0,4)
\psline(2,0)(2,4)
\psline(0,0)(2,4)
\end{pspicture}
\end{center}
\caption{}\label{expseudo}
\end{figure}

2. The lattice $L=\MI(P)$ where $P$ is the regular planar poset displayed in Figure~\ref{NotMy} is not pseudo-Gorenstein. 
}
\end{Examples}

The next theorem shows that, for regular planar lattices,  the necessary condition given in Theorem~\ref{alsoviviana} is also sufficient for the levelness of the Hibi ring. Before stating this theorem we need a preparatory result.

\begin{Lemma}
\label{biggerone}
Let  $L$ be  a regular planar lattice. Let $C_1\union C_2$ be a canonical chain decomposition of $P$, and  assume  that $\ell(C_1)=\rank P$ (cf.\ Corollary~\ref{ell}). Suppose that $P$ satisfies inequality (\ref{inequality}) given in Theorem~\ref{alsoviviana}. Then, for every minimal element $v\in \MT(P),$ we have $v(\max (C_1))=1$.
\end{Lemma}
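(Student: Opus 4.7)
The plan is proof by contradiction: suppose $v\in\MT(P)$ is minimal with $v(m)\geq 2$, where $m:=\max(C_1)$; I will exhibit a $v_0\in\MT(P)$ with $v_0\leq v$ in $\MT(P)$ and $v_0\neq v$, contradicting minimality. The candidate is $v_0:=v-\mathbf{1}_A$ for a down-set $A\subseteq\hat P\setminus\{\infty\}$. Writing any perturbation $f\in\MS(P)$ as $f=\sum_{c\geq 1}\mathbf{1}_{\{f\geq c\}}$, one checks that $v-f\in\MT(P)$ already forces $v-\mathbf{1}_{\{f\geq 1\}}\in\MT(P)$, so it suffices to consider single-indicator perturbations. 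For such an $A$, the relation $v_0\leq v$ in $\MT(P)$ is automatic (since $\mathbf{1}_A\in\MS(P)$), and $v_0\in\MT(P)$ reduces to the boundary gap inequality $v(x)-v(y)\geq 2$ at every cover $y\gtrdot x$ of $\hat P$ with $y\notin A$ and $x\in A$.

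Take $A_0:=\hat P_{\leq m}$. The decomposition $P=C_1\sqcup C_2$ together with regularity restricts the boundary covers to (a) $(\infty,m)$, handled by $v(m)\geq 2$; (b) $(b_j,a_i)$ with $b_j\gtrdot a_i$ in $P$ and $b_j\not\leq m$; (c) $(b_{j+1},b_j)$ with $b_j\leq m$ but $b_{j+1}\not\leq m$; and (d) $(b_1,-\infty)$ when $b_1\not\leq m$. For (b)--(d) I plan to combine the hypothesis (\ref{inequality}) with Lemma~\ref{regular} and the chain $-\infty<a_1<\cdots<a_s=m<\infty$ of maximum length $\rank\hat P$ in $\hat P$. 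In (b), regularity forces $j>i$, and applying (\ref{inequality}) at $b_j\gtrdot a_i$ together with the lower bound $\depth(a_i)\geq\rank\hat P-i$ coming from $C_1$ pins $j=i+1$ and $\depth(a_i)=\rank\hat P-i$; strict descent of $v$ along $a_i<a_{i+1}<\cdots<m$, combined with $v(m)\geq 2$, then yields $v(a_i)\geq\rank\hat P+1-i$, which together with the matching chain bound on $v(b_{i+1})$ coming from $C_2$ and a second use of (\ref{inequality}) secures the required $v(a_i)-v(b_{i+1})\geq 2$. Cases (c) and (d) admit analogous chain-counting treatments.

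Should a particular boundary cover still violate the gap inequality (i.e., exhibit $v$-drop exactly $1$), I will iteratively enlarge $A$ by adjoining $\hat P_{\leq y}$ for each offending $y$ and repeat the analysis. I expect the finiteness of $\hat P$ together with the tight chain-length constraints imposed by (\ref{inequality}) and $\ell(C_1)=\rank P$ to terminate the process at a down-set $A^*\not\ni\infty$ for which $v-\mathbf{1}_{A^*}\in\MT(P)$. The resulting $v_0:=v-\mathbf{1}_{A^*}$ then lies strictly below $v$, contradicting minimality; since $v(m)\geq 1$ for every $v\in\MT(P)$ (from $v(\infty)=0$ and strictness), we conclude $v(m)=1$. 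The main obstacle I anticipate is verifying the gap inequalities in (b)--(d) and controlling the iterative enlargement: one must use (\ref{inequality}) on both ``sides'' of each boundary cover, and the assumption $\ell(C_1)=\rank P$ is essential in making the chain bounds tight enough ($\depth(a_i)=\rank\hat P-i$, and $v(a_i)\geq\rank\hat P+1-i$) to force the strict descent needed for the contradiction.
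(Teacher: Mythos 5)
Your high-level strategy -- contradict minimality by subtracting the indicator function of a suitable down-set of $\hat{P}\setminus\{\infty\}$ from $v$ -- is exactly the paper's strategy, and your reduction to single-indicator perturbations is correct. But the proposal has a genuine gap at its center: the initial choice $A_0=\hat{P}_{\leq \max(C_1)}$ really does fail, so the ``iterative enlargement'' is not a contingency but the entire content of the proof, and you leave it unproved. Concretely, take the butterfly poset of Figure~\ref{NotMy} with $C_1=w_1<w_2<w_3$, $C_2=u_1<u_2$, $w_3\gtrdot u_1$, $u_2\gtrdot w_1$, and $v=\depth+1$ on $\hat{P}\setminus\{\infty\}$: then $A_0=\{-\infty,w_1,w_2,w_3,u_1\}$ and the boundary cover $u_2\gtrdot u_1$ (your case (c)) has $v(u_1)-v(u_2)=1$, so $v-\mathbf{1}_{A_0}\notin\MT(P)$. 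Once you start enlarging, you must show the process never adjoins $\infty$ -- equivalently, that you are never forced to decrement an element $x$ with $v(x)=\depth(x)$, such as $\max(C_2)$ when $v(\max(C_2))=1$ -- and that the final set still satisfies every gap inequality. That is precisely the hard point, and ``I expect the finiteness \dots to terminate the process'' does not address it. Separately, the last step of your case (b), deducing $v(a_i)-v(b_{i+1})\geq 2$ from a ``matching chain bound on $v(b_{i+1})$ coming from $C_2$'', is unjustified: the only chain bounds available are lower bounds $v\geq\depth$, whereas you need an upper bound on $v(b_{i+1})$; and cases (c) and (d) are only asserted to admit ``analogous treatments.''

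The paper sidesteps all of this by choosing the down-set intrinsically: it sets $A=\{x\in\hat{P}: v(x)\geq\depth(x)+1\}$ (i.e., $v'(x)=v(x)-1$ on $A$ and $v'=v$ elsewhere). The hypothesis $v(\max(C_1))\geq 2$ together with $\ell(C_1)=\rank P$ forces $C_1\subseteq A$, the verification that $v'\in\MT(P)$ is immediate from the definition of $A$, and the whole proof reduces to a single claim: there is no cover $x\gtrdot y$ with $v(x)\geq\depth(x)+1$ and $v(y)=\depth(y)$. That claim is where inequality (\ref{inequality}), Corollary~\ref{ell}, and the two-chain structure are used, in a short case analysis on whether $x,y$ lie in $C_1$ or $C_2$. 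In your butterfly example this set $A$ is all of $\hat{P}\setminus\{\infty\}$, which is exactly where your iteration eventually lands -- so if you want to salvage your argument, the cleanest route is to prove that your iteration stabilizes inside $\{v>\depth\}$, at which point you may as well start there.
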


\begin{proof} Let $v\in \MT(P)$ be a minimal element and
assume that $v(\max(C_1))>1$. Then $v(z)\geq \depth(z)+1$ for all $z\in C_1$.

Let
\[
v'(x)= \left\{ \begin{array}{ll}
       v(x)-1, & \;\textnormal{if $v(x)\geq \depth(x)+1$\;  (I),} \\ v(x), & \;\text{if $v(x)=\depth(x)$\hspace{0.8cm} (II),}
        \end{array} \right.
\]
for all $x\in \hat{P}$.

We show that $v'\in \MT(P)$ and  $v-v'\in \MS(P)$. Since $v'\neq v$, this will then show that $v$ is not minimal, a contradiction. Indeed, to see that  $v'\in \MT({P})$ we have to show that $v'(x)<v'(y)$ for all $x\gtrdot y$. If both $x$ and $y$ satisfy (I) or (II), then the assertion is trivial. If $x$ satisfies (I) and $y$ satisfies (II), then $v'(x)=v(x)-1<v(y)=v'(y)$, and if  $x$ satisfies (II) and $y$ satisfies (I), then $v(x)=\depth(x)\leq \depth(y)-1\leq v(y)-2$. Hence $v(x)<v(y)-1$, and this implies that $v'(x)<v'(y)$.

It remains to be shown that $v-v'\in \MS(P)$ which amounts to prove that $v(x)-v'(x)\leq v(y)-v'(y)$ for all $x\gtrdot y$. For this we only need to show that we cannot have $v'(x)=v(x)-1$ and $v(y)=v'(y)$, or, equivalently, that  $v(x)\geq \depth(x)+1$ and $v(y)=\depth(y)$ is impossible.

Assume to the contrary that there exist $x\gtrdot y$ with $v(x)\geq \depth(x)+1$ and $v(y)=\depth(y)$ . Then $y\not\in C_1$ since $v(z)\geq \depth(z)+1$ for all $z\in C_1$. Thus, we may either have $x\in C_1$ and $y\in C_2$, or $x,y\in C_2$.

In the first case, since  $\height(x)+\depth(y)\leq \rank \hat{P}+1$ by assumption, and since $\rank \hat{P}=\height(x)+\depth(x)$ due to the regularity of $L$  (see Corollary~\ref{ell}), we get $\depth(y)\leq \depth(x)+1\leq v(x)<v(y),$ a contradiction.

Finally, let $x,y\in C_2$. Since $v(x)<v(y)$, it follows that  $\depth(y)>\depth(x)+1$.  Therefore, the longest chain from $y$ to $\infty$ cannot pass through $x$. This implies that there exists $z\in C_1$ with $z\gtrdot y$. As in the first case, we then deduce that $v(y)>\depth(y)$. So we get again a contradiction.
\end{proof}

\begin{Theorem}
\label{converse}
Let  $L$ be  a regular planar  lattice.  Then the following conditions are equivalent:
\begin{enumerate}
\item[{\em (a)}] $L$ is level;
\item[{\em (b)}] $\height(x)+\depth(y)  \leq \rank \hat{P}+1$ for all $x,y\in P$ with $x\gtrdot y$;
\item[{\em (c)}]  for all $x,y\in P$ with $x\gtrdot y$, either  $\depth(y) = \depth(x)+1$  or $\height(x) = \height(y)+1$.
\end{enumerate}
\end{Theorem}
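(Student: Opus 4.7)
The plan is to establish the chain of implications (a)$\Rightarrow$(b)$\Rightarrow$(c)$\Rightarrow$(a). The first implication is exactly Theorem~\ref{alsoviviana}, which was proved for arbitrary distributive lattices, so nothing new is needed. The work is concentrated in the combinatorial equivalence (b)$\Leftrightarrow$(c) and in the final step (c)$\Rightarrow$(a).

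For (b)$\Leftrightarrow$(c), set $g(z):=\rank\hat{P}-\height(z)-\depth(z)\geq 0$ for $z\in\hat{P}$. A direct rearrangement shows that, for a fixed cover $x\gtrdot y$, condition (b) is equivalent to either of the two estimates
\[
\height(x)-\height(y)\leq 1+g(y),\qquad \depth(y)-\depth(x)\leq 1+g(x).
\]
Condition (c) immediately implies (b) from these inequalities, since one of the quantities $\height(x)-\height(y)$ or $\depth(y)-\depth(x)$ equals $1$. For the converse, fix a canonical chain decomposition $P=C_1\cup C_2$ with $\ell(C_1)=\rank P$; Corollary~\ref{ell}(c) yields $g\equiv 0$ on $C_1$. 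A case analysis on where $x$ and $y$ lie (both in $C_1$, both in $C_2$, or one in each) — combined with Lemma~\ref{regular}, which identifies $\height_{C_i}$ with $\height_P$ on $C_i$ — forces either $\height(x)=\height(y)+1$ (when $x,y$ share a chain or when $y\in C_1$, in which case $g(y)=0$) or $\depth(y)=\depth(x)+1$ (when $x\in C_1$, in which case $g(x)=0$). This produces (c).

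For (c)$\Rightarrow$(a), let $v\in\MT(P)$ be a minimal element and write $C_1=c_0<c_1<\cdots<c_k$ with $k=\rank P$. Since (c)$\Rightarrow$(b), Lemma~\ref{biggerone} applies and gives $v(c_k)=1=\depth(c_k)$. Combined with the strict order-reversal of $v$ along the chain $-\infty<c_0<\cdots<c_k<\infty$ of length $\rank\hat{P}=k+2$, we obtain $v(-\infty)\geq\rank\hat{P}$, with equality if and only if $v(c_i)=\depth(c_i)$ for every $i$. The plan is to prove this tightness by downward induction on $i$: the base case $i=k$ is Lemma~\ref{biggerone}, and for the inductive step we let $i^{*}$ be the largest index with $v(c_{i^{*}})\geq\depth(c_{i^{*}})+1$ (and $i^{*}<k$) and mimic the construction used in the proof of Lemma~\ref{biggerone}, now localized to the set $\{z\in\hat{P}: z\leq c_{i^{*}}\}$. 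Explicitly, set
\[
v'(z)=\begin{cases} v(z)-1, & z\leq c_{i^{*}}\text{ and }v(z)\geq\depth(z)+1,\\ v(z), & \text{otherwise,}\end{cases}
\]
and show that $v'\in\MT(P)$ and $v-v'\in\MS(P)$, contradicting the minimality of $v$.

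The main obstacle is the verification that this extended construction does not break strict order reversal across a cover $x\gtrdot y$ with $x$ in the altered set but $y$ outside it. The key is that regularity and (b) together rule out such a configuration, exactly as in the proof of Lemma~\ref{biggerone}: if $y\in C_2$ and $x\in C_1$ with $v(x)\geq\depth(x)+1$ and $v(y)=\depth(y)$, then (b) combined with $g(x)=0$ gives $\depth(y)\leq\depth(x)+1\leq v(x)<v(y)$, a contradiction; the case $x,y\in C_2$ is handled by tracing a longest chain from $y$ to $\infty$ through an element $z\in C_1$ covering $y$ and applying the same estimate. Once the induction is complete we have $v(c_i)=\depth(c_i)$ on all of $C_1$, whence $v(-\infty)=v(c_0)+1=\depth(c_0)+1=\rank\hat{P}$, so all minimal elements of $\MT(P)$ have the same value at $-\infty$ and $L$ is level.
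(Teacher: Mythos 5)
Your treatment of (a)$\Rightarrow$(b) and of the equivalence (b)$\Leftrightarrow$(c) matches the paper's: the paper proves (b)$\Rightarrow$(c) by the same case analysis on a canonical chain decomposition using Lemma~\ref{regular} and Corollary~\ref{ell}(c), and (c)$\Rightarrow$(b) by the same two-line computation. The divergence, and the problem, is in the implication toward (a). The paper proves (b)$\Rightarrow$(a) by induction on $\rank P$: it deletes $\max(C_1)$ and $\max(C_2)$, invokes the inductive levelness of the resulting lattice to obtain a suitable $w$, and then extends $1+w$ back over the two deleted maximal elements to manufacture a $v'$ with $v'(-\infty)=\rank \hat{P}$ and $v-v'\in\MS(P)$. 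You instead try to pin a minimal $v$ down on $C_1$ directly, and this route has a genuine gap at its final step. Even granting that every minimal $v\in\MT(P)$ satisfies $v(c_i)=\depth(c_i)$ for all $c_i\in C_1$, the conclusion $v(-\infty)=\rank\hat{P}$ does not follow. For a minimal $v$ one has $v(-\infty)=1+\max_{p\in P}v(p)$ (otherwise decrement $v$ at $-\infty$ alone), and this maximum is attained at a minimal element of $P$, which may well be $\min(C_2)$ rather than $\min(C_1)=c_0$. Your asserted equivalence ``$v(-\infty)=\rank\hat{P}$ if and only if $v=\depth$ on $C_1$'' is therefore only an ``only if'': nothing in your argument bounds $v(\min(C_2))$ above by $\rank\hat{P}-1$, and supplying exactly this control over the short chain is what the paper's rank induction accomplishes.

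There is a second unverified point in the localized decrement itself. You check that no cover $x\gtrdot y$ has $x$ decremented and $y$ untouched, which is what $v-v'\in\MS(P)$ requires. But strict order reversal of $v'$ can also fail at a cover $x\gtrdot y$ in which $y\leq c_{i^{*}}$ is decremented while $x$ is not because $x\not\leq c_{i^{*}}$, yet $v(x)\geq\depth(x)+1$ and $v(x)=v(y)-1$: then $v'(x)=v'(y)$. In the global construction of Lemma~\ref{biggerone} this configuration cannot occur, since there an undecremented $x$ necessarily satisfies $v(x)=\depth(x)\leq\depth(y)-1\leq v(y)-2$; once the decrement is restricted to $\{z\in\hat{P}:z\leq c_{i^{*}}\}$ that dichotomy is lost (for $x=c_{i^{*}+1}$ you can appeal to the maximality of $i^{*}$, but not for $x\in C_2$), so this case must be ruled out separately. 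Both issues would need to be repaired before your argument can replace the paper's induction on rank.
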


\begin{proof}
(a) \implies (b) follows from Theorem~\ref{alsoviviana}.

(b) \implies (c): Let $C_1\cup C_2$ be a canonical chain decomposition of $P$ with $|C_1|\geq |C_2|$.

If $x,y \in C_1$ or $x,y \in C_2$, then, by Lemma~\ref{regular}, it follows that $\height(x) = \height(y)+1$.

Next suppose that $x\in C_1$. Since $L$ is regular, we may apply Corollary~\ref{ell} and conclude that $\height(x)+\depth(x)=\rank \hat{P}$. Thus, by (b), we get $\depth (y)\leq \depth (x)+1$. On the other hand, it is clear that $\depth (y)\geq \depth (x)+1$. So that $\depth (y)=\depth (x)+1$.
Finally, if $y\in C_1$, then, by Corollary~\ref{ell}, we have $\height(y)+\depth(y)=\rank \hat{P}$. As in the previous case, we conclude that $\height(x) = \height(y)+1$.

(c)\implies (b): If $\depth(y)=\depth(x)+1$, then $\height(x)+\depth(y)=\height(x)+\depth(x)+1\leq \rank \hat{P}+1$, and if $\height (x)=\height(y)+1$, then $\height(x)+\depth(y)=\height (y)+\depth(y)+1\leq \rank \hat{P}+1$.

(b) \implies (a): As in Lemma~\ref{biggerone}  we let  $C_1\union C_2$ be a canonical chain decomposition of $P$, and  may assume  that $\ell(C_1)=\rank P\geq \ell(C_2)$. Let $v$ be minimal in $\MT(P)$. We will  show that there exists $v'\in \MT(P)$ with $v'(-\infty)=\rank \hat{P}$ and such that $v-v'\in \MS(P)$. Since $v$ is a minimal generator it follows that $v=v'$, thus $v(-\infty)=\rank\hat{P}.$ Consequently, it follows that all the minimal generators of $\omega_L$ have the same degree.

In order to construct $v'$ we consider the subposet $Q$ of $P$ which is obtained from $P$ by removing the maximal elements $\max(C_1)$ and $\max(C_2)$. We define on $\hat{Q}$ the strictly order reversing function $u$  by $u(\infty)=0$,  and $u(z)=v(z)-1$ for all other $z\in \hat{Q}$. We notice that the ideal lattice of $Q$ is again a regular planar  lattice satisfying (b). Indeed, assume that there exist $x\gtrdot y$ with $x,y\in Q$ such that $\height_{\hat{Q}}(x)+
\depth_{\hat{Q}}(y)>\rank \hat{Q}+1=\rank \hat{P}$. Since $\height_{\hat{Q}}(x)=\height(x)$ and $\depth(y)=\depth_{\hat{Q}}(y)+1$, it follows that
\[
\height(x)+\depth(y)=\height_{\hat{Q}}(x)+\depth_{\hat{Q}}(y)+1>\rank \hat{P}+1,
\]
a contradiction.

Therefore, by induction on the rank we may assume that the ideal lattice of $Q$ is level. Hence, there exists $w\in \MT(Q)$ with $w(-\infty)=\rank \hat{Q}=\rank \hat{P}-1$ and such that $u-w\in \MS(Q)$. Set $v'(z) =1+w(z)$ for all $z\in A=Q\union\{-\infty\}$. Then $v'$ is a strictly order reversing function on $A$ with $v'(-\infty)=\rank \hat{P}$ and such that $v-v'$ is order reversing on $A$. It remains to define $v'(C_i)$ for $i=1,2$ in a way such  that $v'\in \MT(P)$ and $v-v'\in \MS(P)$. We have to set $v'(\max(C_1))=1$ since $v(\max(C_1))=1$, and of course $v'(\infty)=0$. Let $x=\max(C_2)$ and let $z\in C_2$ be the unique element with $x\gtrdot z$. We set $v'(x)= v(x)-u(z)+w(z)= v(x)-v(z)+1+w(z)$, and claim that this $v'$ has the desired properties. Indeed, $v'(x)=v(x)-(v(z)-1-w(z))\leq v(x)$ and $v'(x)<1+w(z)=v'(z)$, since $v(x)<v(z)$. If $z$ is the only element covered by $x$, we are done. Otherwise, there exists $y\in C_1$ with $x\gtrdot y$ and it remains to be shown that $v'(y)>v'(x)=v(x)-v(z)+1+w(z)$. Suppose we know that $\depth_{\hat{Q}}(y)\geq w(z)$,  then
\[
v'(y)=w(y)+1\geq \depth_{\hat{Q}}(y)+1>w(z)\geq v'(x),
\]
as desired, since $v(x)-v(z)+1\leq 0$.  Thus, in order to complete the proof, we have to show that $\depth_{\hat{Q}}(y)\geq w(z)$. Since the ideal lattice of $Q$ is regular, this is equivalent to showing that
\begin{eqnarray}
\label{last}
w(z)\leq \rank \hat{Q}-\height_{\hat{Q}}(y).
\end{eqnarray}
The assumption  (b) and  Corollary~\ref{ell}(c)  imply that
\[
\height(x)+\depth(y)\leq \rank \hat{P}+1=\height(y)+\depth(y)+1,
\]
so that $\height(x)\leq \height(y)+1$. This yields
\begin{eqnarray}
\label{good}
\height(x)= \height(y)+1
\end{eqnarray}
since  $\height(x)\geq \height(y)+1$  always holds.

On the other hand, since $L$ is regular, Lemma~\ref{regular} implies that  $\height_P(x)= \height_{C_2}(x) =\height_{C_2}(z)+1=\height_{P}(z)+1$. This implies that $\height (x)=\height (z)+1$. So together with (\ref{good})  we then conclude that $\height(y)=\height(z)$. Since $\height_{\hat{Q}}(y)= \height(y)$ and $\height(z)= \height_{\hat{Q}}(z)$, inequality (\ref{last}) becomes $w(z)\leq \rank \hat{Q}-\height_{\hat{Q}}(z)$, and since $w(-\infty) =\rank \hat{Q}$,  this inequality indeed holds. This completes the proof of the theorem.
\end{proof}

\begin{Remark}{\em 
We do not know any example showing that condition (b) of the above theorem is not sufficient for the levelness of the Hibi ring. We conjecture that condition (b) in Theorem~\ref{converse} is also sufficient for any distributive lattice. 

\medskip

At the end of this subsection we go back to Hibi's examples of Figure~\ref{Hibiexample}. They correspond to the two posets displayed in Figure~\ref{Hibiposets}. 
It is easily seen that the left side poset which corresponds to the level lattice in Figure~\ref{Hibiexample} is not regular and of course
satisfies condition (b) in the above theorem. The right side poset corresponds to the non-level lattice in Figure~\ref{Hibiexample} and 
it does not satisfy condition (b) in Theorem~\ref{converse}.

\begin{figure}[hbt]
\begin{center}
\psset{unit=0.6cm}
\begin{pspicture}(-2,-2)(-4,4)
\rput(-8,0){
\rput(0,0){$\bullet$}
\rput(0,2){$\bullet$}
\rput(0,4){$\bullet$}
\rput(2,-2){$\bullet$}
\rput(2,0){$\bullet$}
\rput(2,2){$\bullet$}
\rput(2,4){$\bullet$}
\psline(0,0)(0,4)
\psline(2,-2)(2,4)
\psline(0,2)(2,0)
\psline(0,4)(2,2)
}
\rput(-1,0){
\rput(0,0){$\bullet$}
\rput(0,2){$\bullet$}
\rput(0,4){$\bullet$}
\rput(2,-2){$\bullet$}
\rput(2,0){$\bullet$}
\rput(2,2){$\bullet$}
\rput(2,4){$\bullet$}
\psline(0,0)(0,4)
\psline(2,-2)(2,4)
\psline(0,2)(2,0)
\psline(0,4)(2,2)
\psline(0,0)(2,4)
}
\end{pspicture}
\end{center}
\caption{}\label{Hibiposets}
\end{figure}

}
\end{Remark}

\subsection{Level and pseudo-Gorenstein  generalized Hibi rings}
\label{levelgeneralized}

In Subsection~\ref{generalized} we have presented the construction of the  generalized Hibi ring $R_r(P).$ Here $r\geq 2$ is an integer and 
$P$ is a finite poset. We have seen in Theorem~\ref{thm43} that $R_r(P)$ is the classical Hibi ring of the lattice $L_r=\MI_r(P)$ whose poset of join-irreducible elements is $P_r=P\times Q_{r-1}$. This identification allowed us to prove that the generalized Hibi ring is Gorenstein if and only if $P$ is pure. 

In the next theorem, following \cite[Section 5]{EHHSara}, we investigate some other properties of $R_r(P).$

\begin{Theorem}\cite[Theorem 5.1]{EHHSara}
Let $P$ be a finite poset and $r\geq 2$ an integer. Let $L=\MI(P)$ and $L_r=\MI(P_r)$. Then
\begin{itemize}
	\item [(a)] $\type R[L]\leq \type  R[L_r];$
	\item [(b)] The ring $R[L]$ is pseudo-Gorenstein if and only if $R[L_r]$ is pseudo-Gorenstein.
	\item [(c)] If $R[L_r]$ is level, then $R[L]$ is level.
\end{itemize}
\end{Theorem}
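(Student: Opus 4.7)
The plan is to construct an order-preserving injection $\iota\colon \MT(P)\hookrightarrow \MT(P_r)$ shifting the value at $-\infty$ by exactly $r-2=\rank\widehat{P_r}-\rank\hat{P}$, and then to show that it sends minimal elements to minimal elements. Since the generators of $\omega_L$ are in bijection with the minimal elements of $\MT(P)$, with the degree of a generator equal to $v(-\infty)$, and similarly for $\omega_{L_r}$, all three statements will follow.

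I would define $\iota(v)=\tilde v$ by
\[
\tilde v(p,k)=v(p)+(r-1-k),\qquad \tilde v(-\infty)=v(-\infty)+(r-2),\qquad \tilde v(\infty)=0,
\]
and check $\tilde v\in\MT(P_r)$ directly from the two types of covers in $P_r$, namely $(p,k+1)\gtrdot(p,k)$ and $(p,k)\gtrdot(q,k)$ with $p\gtrdot q$ in $P$, together with the boundary covers at $\pm\infty$.

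The key lemma is that if $v$ is minimal in $\MT(P)$, then $\tilde v$ is minimal in $\MT(P_r)$. Given $w\le \tilde v$ in $\MT(P_r)$, I set
\[
v'(p)=w(p,r-1),\qquad v'(-\infty)=w(-\infty)-(r-2),\qquad v'(\infty)=0.
\]
The vertical chain $-\infty\lessdot(p_0,1)\lessdot\cdots\lessdot(p_0,r-1)$ for a minimal $p_0\in P$ forces $w(-\infty)\ge w(p_0,r-1)+(r-1)$, so $v'(-\infty)>v'(p_0)$ and hence $v'\in\MT(P)$. The condition $\tilde v-w\in\MS(P_r)$ yields $v-v'\in\MS(P)$, whence $v'\le v$. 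If moreover $v'=v$, then $w(p,r-1)=\tilde v(p,r-1)$ for every $p\in P$ and $w(-\infty)=\tilde v(-\infty)$; combining the strict inequalities $w(p,1)>w(p,2)>\cdots>w(p,r-1)$ with $w\le \tilde v$ then forces $w(p,k)\ge \tilde v(p,k)$, hence $w(p,k)=\tilde v(p,k)$ for all $k$, so $w=\tilde v$. Thus if $\tilde v$ is not minimal, neither is $v$.

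Given the key lemma, part (a) is immediate since $\type R[L]$ equals the number of minimal elements of $\MT(P)$, which by the injection is bounded above by the number of minimal elements of $\MT(P_r)=\type R[L_r]$. For part (b), a direct computation in the product poset gives
\[
\height_{\widehat{P_r}}(p,k)+\depth_{\widehat{P_r}}(p,k)=\height_{\hat{P}}(p)+\depth_{\hat{P}}(p)+(r-2),
\]
and since $\rank\widehat{P_r}=\rank\hat{P}+(r-2)$, Theorem~\ref{classification} applied to both posets gives the equivalence (one recovers the condition on $P$ by taking $k=1$). For part (c), if $R[L_r]$ is level then all minimal elements of $\MT(P_r)$ share a common value $d$ at $-\infty$; by the key lemma each minimal $v\in\MT(P)$ satisfies $\tilde v(-\infty)=d$, so $v(-\infty)=d-(r-2)$ is constant, proving $R[L]$ is level. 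The main obstacle is the key lemma: everything hinges on the strict-chain inequality along the vertical fibers $\{(p,1),\ldots,(p,r-1)\}$, which is what propagates $w(p,r-1)=\tilde v(p,r-1)$ upward to force $w=\tilde v$.
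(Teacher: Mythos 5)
Your proposal is correct and follows essentially the same route as the paper: your injection $\iota$ is exactly the paper's map $\varepsilon$, and parts (a), (b), (c) are deduced from it in the same way (your (c) is just the contrapositive of the paper's argument). The only harmless variation is in the minimality lemma, where you restrict a competitor $w\leq \tilde v$ to the top fiber $k=r-1$ and propagate equality upward along the chains $(p,1)>\cdots>(p,r-1)$, whereas the paper restricts $w$ to every fiber $i$ separately and invokes the minimality of $v$ once per fiber.
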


\begin{proof} (a) We know that, for a distributive lattice $L,$ $\type R[L]$ is equal to the number of the minimal generators of $\omega_
L,$ thus, $\type R[L]=|\min \MT(P)|$ where $\min \MT(P)$ denotes the set of minimal elements in $\MT(P).$ Therefore, in order to prove (a), 
it suffices to find an injective map $\min \MT(P)\to \min \MT(P_r).$ We define $\varepsilon: \min \MT(P)\to \min \MT(P_r)$ as follows. If 
$v\in\min \MT(P),$ then $\varepsilon(v)(x,i)=v(x)+(r-1-i)$ and $\varepsilon(v)(\infty)=0, \varepsilon(v)(-\infty)=v(-\infty)+(r-2).$
One easily checks that $v^\prime=\varepsilon(v)\in \MT(P_r).$ In order to show that $\varepsilon(v)\in \min\MT(P_r)$, we prove that if 
$u\in \MT(P_r)$ and $v^\prime -u \in \MS(P_r),$ then $v^\prime=u.$ 

For any $w\in \MT(P_r)$ and for $i\in [r-1]$ we define the function  $w_i$ on $\hat{P}$ as follows:
\[
w_i(x)=w(x,i)-(r-1-i) \quad \text{for all} \quad x\in P,
\]
$w_i(\infty)=0$ and $w_i(-\infty)=\max\{w_i(x)\: x\in P\}+1$.  Then $w_i\in \MT(P)$.

Since $v'-u\in  \MS(P_r)$ it follows that  $v-u_i=v'_i-u_i\in \MS(P)$. Since $v\in \min\MT(P)$ we get $v=u_i$ for all $i$. This shows that $v'=u$. 

It remain to show that $\varepsilon$ is injective. Let $v,w\in \min \MT(P)$ with $\varepsilon(v)=\varepsilon(w).$ By the definition of 
$\varepsilon$ we get immediately $v=w.$
 
(b) By Theorem~\ref{classification}, $R[L_r]$ is pseudo-Gorenstein if and only if, for all $x\in P,$ 
\[
\height_{\hat{P}_r} x+ \depth_{\hat{P}_r}x=\rank \hat{P}_r. 
\]

We will show that, for all $x\in P$ and $1\leq i\leq r-1,$ 
\begin{equation}\label{ht}
\height_{\hat{P}_r} x =\height_{\hat{P}} x+i-1 \text{ and } \depth_{\hat{P}_r} x =\depth_{\hat{P}} x+(r-i-1).
\end{equation}
In particular, we get $\rank \hat{P}_r=\rank \hat{P}+(r-2).$ These equalities will then imply that $R[L_r]$ is pseudo-Gorenstein if and 
only if $\height_{\hat{P}} x+\depth_{\hat{P}_r} x=\rank \hat{P} $, that is, if and only if $R[L]$ is pseudo-Gorenstein.

We will prove only the first equality in (\ref{ht}). The other one may be proved in a similar way. If $\height_{\hat{P}} x=1$, then 
we have nothing to prove since $x$ is a minimal element in $P$ and $i=1.$ Let $\height_{\hat{P}} x>1$ and 
$x=x_0>x_1>\cdots >x_h>-\infty$ be a maximal chain in $\hat{P}$ of length equal to  $\height_{\hat{P}} x.$ Then
\[
(x,i)=(x_0,i)>(x_1,i)>\cdots > (x_h,i)>(x_h,i-1)>\cdots > (x_h, 1)>-\infty
\] is a chain of length $\height_{\hat{P}} x+i-1$ in $\hat{P}_r.$ Therefore, we have $\height_{\hat{P}_r} x \geq\height_{\hat{P}} x+i-1.$ 
For the other inequality, we proceed by induction on $\height_{\hat{P}} x.$ Let $(x,i)=z_0>z_1>\cdots > z_t>-\infty$ be a chain of length 
$\height_{\hat{P}} x$ in $\hat{P}_r.$ Then either $z_1=(y,i)$ where $x\gtrdot y$ in $P$ or $z_1=(z,i-1)$. By the inductive hypothesis, in 
the first case $\height_{\hat{P}_r}(z_1)\leq \height_{\hat{P}} y+(i-1)\leq \height_{\hat{P}} x+i-2,$ and, in the second case,
$\height_{\hat{P}_r}(z_1)\leq \height_{\hat{P}} x+(i-2).$ In both cases it follows that $\height_{\hat{P}_r} x \leq\height_{\hat{P}} x+i-1. $

(c) Suppose  that $L$ is not level. Then there exists $v\in \min \MT(P)$ with $v(-\infty)>\rank \hat{P}$. Then $\varepsilon(v)$, as defined in the proof of part (a), belongs to  $\min \MT(P_r)$ and
\[
\varepsilon(v)(-\infty)=v(-\infty)+(r-2)>\rank \hat{P}+(r-2)=\rank \hat{P}_r.
\]
This shows that $L_r$ is not level.
\end{proof}

\section{The regularity of Hibi rings}
\label{regsection}

Let $L$ be a distributive lattice and $P$ its subset of join-irreducible elements. We assume that $|P|=n.$ Hence, $\rank L=n.$ 
The ring $R[L]$ is a standard graded algebra with the presentation $R[L]=S/I_L$ where $S=K[\{x_\alpha: \alpha\in L\}]$ and 
\[I_L=(x_\alpha x_\beta-x_{\alpha\cap\beta}x_{\alpha\cup\beta}: \alpha\in L, \alpha,\beta \text{ incomparable}).\]

Not so much is known about the $S$-resolution of the Hibi ring $R[L].$

One may easily compute the projective dimension of $R[L].$ Since $R[L]$ is Cohen-Macaulay, $\projdim R[L]=|L|-\dim R[L].$ Since 
$R[L]$ and $S/\ini_<(I_L)$ have the same Hilbert series, it follows that $\dim R[L]=\dim (S/\ini_<(I_L)).$ As we have already seen in Subsection~\ref{GBsubsect}, $S/\ini_<(I_L)$ is the Stanley-Resiner ring of the order complex of $L.$ Since the facets of this complex have the cardinality equal to $|P|+1,$ we get $\dim (S/\ini_<(I_L))=|P|+1.$ Therefore,
\[
\projdim R[L]=|L|-|P|-1.
\]

Another important homological invariant of $R[L]$ is the regularity. In this section we present the formula for $\reg R[L]$ following \cite{EHSara}. This can be given in terms of the poset $P.$ In the second part of this section, we study Hibi rings with linear syszygies and with pure resolution  for planar distributive lattices.

\subsection{The regularity of Hibi rings}
\label{regsubsection}

Before stating the formula for the regularity of $R[L],$ we explain how one may compute the regularity of a Cohen-Macaulay standard graded $K$--algebra. Let $R$ be a Cohen-Macaulay standard graded $K$--algebra, say $R=T/I$ where $T=K[x_1,\ldots,x_n]$ and 
$I\subset T$ a graded ideal. The Hilbert series of $R$ has the form $H_R(t)=Q(t)/(1-t)^{\dim R}$ where 
$Q(t)=\sum_{i\geq 0}h_it^i\in \ZZ[t]$ with $Q(1)\neq 0.$ The vector of the coefficients of $Q,$ $h=(h_0,h_1,\ldots)$, is called the 
{\em $h$--vector} of $R.$ As $R$ is Cohen-Macaulay, one may find an $R$--regular sequence $\theta=\theta_1,\ldots,\theta_{\dim R}$ of linear forms. The rings $R$ and $R/\theta R$ have the same $h$--vector and the same regularity \cite[Theorem 20.2]{P}. Since $\dim R/\theta R=0,$ we have $\reg R/\theta R=\deg h$ \cite[Exercise 20.18]{Eis2}. Consequently, 
\begin{equation}\label{regdeg}
\reg R=\deg h.
\end{equation}

The {\em $a$--invariant} $a(R)$ of $R$ is defined as the degree of the Hilbert series of $R;$ see \cite[Definition 4.4.4]{BHbook}. Hence, we have $a(R)=\deg h-\dim R.$ On the other hand, $a(R)=-\min\{i: (\omega_R)_i\neq 0\}$ where $\omega_R$ is the canonical module of $R$
 \cite[Chapter 4]{BHbook}. Therefore,
\begin{equation}\label{eqreg}
\reg R=\dim R-\min\{i: (\omega_R)_i\neq 0\}.
\end{equation}

\begin{Theorem}\cite[Theorem 1.1]{EHSara}\label{regularity}
Let $L=\MI(P)$ be a distributive lattice and $R[L]$ its Hibi ring. Then $\reg R[L]=|P|-\rank P-1.$
\end{Theorem}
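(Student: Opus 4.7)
The plan is to apply formula~(\ref{eqreg}), which expresses the regularity of a Cohen--Macaulay standard graded $K$--algebra as
\[
\reg R[L] = \dim R[L] - \min\{i : (\omega_L)_i \neq 0\}.
\]
Both quantities on the right are accessible from material already developed in the paper.

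First I would identify $\dim R[L]$. As noted in the preamble to this subsection, $R[L]$ and $S/\ini_<(I_L)$ share the same Hilbert series, and $S/\ini_<(I_L)$ is the Stanley--Reisner ring of the order complex of $L$; its facets correspond to maximal chains of $L$, all of cardinality $|P|+1$. Hence $\dim R[L] = |P|+1$.

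Next I would compute $\min\{i : (\omega_L)_i \neq 0\}$ using the combinatorial description of $\omega_L$ from Subsection~\ref{canonicalHibi}. Recall that a $K$--basis of $\omega_L$ is given by the monomials
\[
t^{v(-\infty)}\prod_{i=1}^{n} x_i^{v(p_i)}, \qquad v \in \MT(P),
\]
and since the standard grading on $R[L]$ assigns degree $1$ to each generator $u_\alpha$, these basis elements have degree $v(-\infty)$. Therefore $\min\{i : (\omega_L)_i \neq 0\} = \min\{v(-\infty) : v \in \MT(P)\}$. Inequality~(\ref{firstineq}) gives $v(-\infty) \geq \rank \hat{P} = \rank P + 2$ for every $v \in \MT(P)$, and this bound is attained by the function $\depth \in \MT(P)$, since $\depth(-\infty) = \rank \hat{P}$. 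Consequently $\min\{i : (\omega_L)_i \neq 0\} = \rank P + 2$.

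Combining these two computations yields
\[
\reg R[L] = (|P|+1) - (\rank P + 2) = |P| - \rank P - 1,
\]
as claimed. I do not foresee a serious obstacle here: the only content beyond invoking (\ref{eqreg}) is to recognize that the minimum exponent of $t$ appearing in the basis of $\omega_L$ is realized by the strictly order reversing map $\depth$, which is essentially a maximal chain argument in $\hat{P}$ that has already been carried out in the proof of Theorem~\ref{GorensteinHibi}.
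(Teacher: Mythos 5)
Your argument is correct and is essentially identical to the paper's own proof: both invoke formula~(\ref{eqreg}), use $\dim R[L]=|P|+1$, and identify the initial degree of $\omega_L$ as $\rank P+2$ via inequality~(\ref{firstineq}) together with the fact that $\depth\in\MT(P)$ attains this bound. No further comment is needed.
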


\begin{proof}
We know that $\dim R[L]=|P|+1.$ By equality (\ref{eqreg}), we need to compute the initial degree of $\omega_L.$ In other words, we have 
to compute $\min\{v(-\infty): v\in \MT(P)\}.$ We have seen in Subsection~\ref{canonicalHibi} that $v(-\infty)\geq \rank\hat{P}=\rank P+2.$ On the other hand, $\depth\in \MT(P)$ and $\depth(-\infty)=\rank\hat{P}.$
 Therefore, 
$\min\{v(-\infty): v\in \MT(P)\}=\rank P+2.$ This implies that $\reg R[L]=|P|-\rank P-1.$
\end{proof}

A combinatorial proof of the above theorem can be found in \cite{EHSara}.

As a direct consequence of Theorem~\ref{regularity}, we may characterize the lattices $L$ for which $R[L]$ has a linear resolution. 
This characterization was first obtained in \cite{ERQ}. We can restrict to simple lattices. 
 Recall that $L=\MI(P)$ is called {\em simple} if there is no  $p \in P$ with the property that any element of $P$ is comparable to $p.$
In  lattice, this means that there are no elements $\alpha<\beta $ in $L$ such that any element $\gamma\in L$ satisfies either 
$\gamma\geq \beta$ or $\gamma\leq \alpha.$
 In what follows, we may assume without any restrictions that $L$ is simple. Indeed, if $L$ is not simple, we let  $P'$ to be the subposet  of $P$ which is obtained by removing a vertex $p\in P$ which is comparable to any other vertex of $P$ and set $L'=\MI(P')$.  Then $I_L$ and $I_{L'}$ have the same  regularity. Indeed, $|P'|=|P|-1$, and since any maximal chain of $P$ passes through $p$, it also follows that $\rank P'=\rank P-1$. Thus the assertion follows from Theorem~\ref{regularity}.

\begin{Corollary}
\label{linear resolution}
Let $L=\MI(P)$ be a finite simple distributive lattice. Then $R[L]$ has a linear resolution if and
only if $P$ is the direct sum  of a chain and an isolated element.
\end{Corollary}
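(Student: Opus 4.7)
My approach is to combine the regularity formula in Theorem~\ref{regularity} with a short combinatorial argument. Since $L$ is simple, $L$ cannot be a chain (in a chain every element is comparable to all others, so the corresponding $P$ would be a chain and then every $p\in P$ would be comparable to all of $P$), so $I_L\neq 0$ and is generated in degree $2$. Thus $R[L]$ has a linear resolution precisely when $\reg R[L]=1$, and by Theorem~\ref{regularity} this is equivalent to $|P|=\rank P+2$. The task therefore reduces to showing that, for a simple lattice $L=\MI(P)$, the equality $|P|=\rank P+2$ holds if and only if $P=C+\{p\}$ with $C$ a chain.

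The ``if'' direction is a direct check: if $P=C+\{p\}$ with $|C|=n\geq 1$ and $p$ an isolated element, then $\rank P=n-1$ (the longest chain of $P$ is $C$ itself, since $p$ is incomparable to every element of $C$) and $|P|=n+1$, so $|P|-\rank P-1=1$. Moreover, each element of $C$ is incomparable to $p$ and $p$ is incomparable to each element of $C$, so no element of $P$ is comparable to every other element, and hence $L$ is simple.

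For the ``only if'' direction, assume $|P|=\rank P+2$, fix a maximal chain $c_0<c_1<\cdots<c_r$ of $P$ with $r=\rank P$, and let $p$ be the unique element of $P$ not on this chain. The crucial observation is that the set $I=\{i : c_i<p\}$ is downward closed in $\{0,1,\ldots,r\}$ (if $c_i<p$ and $j<i$ then $c_j<c_i<p$), and, symmetrically, $J=\{i : c_i>p\}$ is upward closed. Hence if $I\neq\emptyset$ then $0\in I$, so $c_0<p$ and $c_0<c_i$ for all $i>0$, making $c_0$ comparable to every other element of $P$ and contradicting simplicity; the assumption $J\neq\emptyset$ dually forces $c_r$ to be universally comparable. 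Therefore $I=J=\emptyset$, $p$ is incomparable to each $c_i$, and $P=C+\{p\}$. The only conceptually nontrivial point is spotting the closure property of $I$ and $J$; once that is noted, the endpoints of $C$ are the natural candidates for universally comparable elements, and the conclusion is immediate.
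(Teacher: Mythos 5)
Your proposal is correct and follows essentially the same route as the paper: reduce via Theorem~\ref{regularity} to the condition $|P|=\rank P+2$, observe that exactly one element $p$ lies off a maximal chain, and use simplicity to force $p$ to be incomparable to the whole chain. Your argument with the downward/upward closed sets $I$ and $J$ merely fills in the details that the paper compresses into the sentence ``this element cannot be comparable to any element of the chain since the lattice is simple.''
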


\begin{proof}
The Hibi ring $R[L]$ has a linear resolution if and only if $\reg R[L]=1.$ By Theorem~\ref{regularity}, this is equivalent to 
$|P|-\rank P=2$. Hence, apart of a chain, $P$ contains just one element. This element cannot be comparable to any element of the chain since the lattice is simple. 
\end{proof}

Theorem~\ref{regularity} allows the characterization of several other Hibi rings. 

Extremal Cohen-Macaulay and Gorenstein algebras appeared in \cite{Sa} and \cite{Sch}. In \cite{KSK}, nearly extremal Cohen-Macaulay and Gorenstein algebras were defined. Let $R=T/I$ be a standard graded algebra. Here $T$ is a polynomial ring over $K$ n finitely many variables and $I\subset T$ is a graded ideal. Let $h=(h_0,\ldots,h_s)$ be the $h$--vector of $R$ and assume that the initial degree 
of $I$ is equal to $p.$

(1). Suppose that $R$ is Cohen-Macaulay. Then $s\geq p-1.$ If $s=p-1$ ($s=p$), then $R$ is called ({\em nearly}) {\em extremal Cohen-Macaulay}.

(2). Suppose that $R$ is Gorenstein. Then $s\geq 2(p-1)$. If $s=2(p-1)$ (s=2p-1), then $R$ is called ({\em nearly}) {\em extremal  Gorenstein}.

Since $\reg R[L]=\deg h,$ we may use Theorem~\ref{regularity} to characterize the simple lattices $L$ (or, equivalently, the poset $P$) for 
which $R[L]$ is a (nearly) extremal Cohen-Macaulay or Gorenstein algebra. In our case, the initial degree of the presentation ideal 
of $R[L]$ is equal to $2.$ Therefore, we get:

(i). If $\reg R[L]=1$ ($\reg R[L]=2$), then $R[L]$ is (nearly) extremal Cohen-Macaulay. Thus, $R[L]$ is extremal Cohen-Macaulay if and only if $R[L]$ has a linear resolution. 

In order to characterize the lattices $L$ for which $\reg R[L]=2,$ we have to find all the posets $P$ with $|P|=\rank P +3.$ This characterization was done in \cite{EHSara}. Let $C$ be a maximal chain in $P$. Since $|P|=\rank P+3$, it follows that there exist precisely two elements $q,q^\prime\in P$ which do not belong to $C$. The only posets satisfying  $|P|=\rank P+3$ for which $L=\MI(P)$ is  simple are displayed in Figure~\ref{reg=3}.

\begin{figure}[hbt]
\begin{center}
\psset{unit=0.5cm}
\begin{pspicture}(-2,-1)(4,7)
\rput(-12,0){
\rput(0,0){$\bullet$}
\rput(1,0){$\bullet$}
\rput(2,0){$\bullet$}
\rput(2,2){$\bullet$}
\rput(2,4){$\bullet$}
\rput(2,6){$\bullet$}
\psline(2,0)(2,2)
\psline[linestyle=dotted](2,2)(2,4)
\psline(2,4)(2,6)
}
\rput(-7,0){
\rput(0,0){$\bullet$}
\rput(1,0){$\bullet$}
\rput(2,0){$\bullet$}
\rput(2,2){$\bullet$}
\rput(2,4){$\bullet$}
\rput(2,6){$\bullet$}
\psline(2,0)(2,2)
\psline[linestyle=dotted](2,2)(2,4)
\psline(2,4)(2,6)
\psline(1,0)(2,3)
}
\rput(-2,0){
\rput(0,2){$\bullet$}
\rput(0,0){$\bullet$}
\rput(2,0){$\bullet$}
\rput(2,2){$\bullet$}
\rput(2,4){$\bullet$}
\rput(2,6){$\bullet$}
\psline(2,0)(2,2)
\psline[linestyle=dotted](2,2)(2,4)
\psline(2,4)(2,6)
\psline(0,0)(0,2)
}
\rput(3,0){
\rput(0,4){$\bullet$}
\rput(0,0){$\bullet$}
\rput(2,0){$\bullet$}
\rput(2,2){$\bullet$}
\rput(2,4){$\bullet$}
\rput(2,6){$\bullet$}
\psline(2,0)(2,2)
\psline[linestyle=dotted](2,2)(2,4)
\psline(2,4)(2,6)
\psline(0,0)(2,3)
\psline(0,0)(0,4)
}
\rput(8,0){
\rput(0,4){$\bullet$}
\rput(0,0){$\bullet$}
\rput(2,0){$\bullet$}
\rput(2,2){$\bullet$}
\rput(2,4){$\bullet$}
\rput(2,6){$\bullet$}
\psline(2,0)(2,2)
\psline[linestyle=dotted](2,2)(2,4)
\psline(2,4)(2,6)
\psline(0,4)(2,3)
\psline(0,0)(0,4)
}
\rput(13,0){
\rput(0,4){$\bullet$}
\rput(0,0){$\bullet$}
\rput(2,0){$\bullet$}
\rput(2,2){$\bullet$}
\rput(2,4){$\bullet$}
\rput(2,6){$\bullet$}
\psline(2,0)(2,2)
\psline[linestyle=dotted](2,2)(2,4)
\psline(2,4)(2,6)
\psline(0,4)(2,2.5)
\psline(0,0)(2,3.5)
\psline(0,0)(0,4)
}
\end{pspicture}
\end{center}
\caption{}\label{reg=3}
\end{figure}

(ii). Let $R[L]$ be Gorenstein. By Theorem~\ref{GorensteinHibi}, $P$ is a pure poset. The ring $R[L]$ is (nearly) extremal Goresnstein 
if $\reg R[L]=2$ ($\reg R[L]=3$). In the first case we get easily the poset $P$ of one of the forms displayed in Figure~\ref{extremalGor};
see also \cite{EHSara}.
In the second case one obtains again a finite number of posets $P$ for which $R[L]$ is nearly extremal Gorenstein.

\begin{figure}[hbt]
\begin{center}
\psset{unit=0.5cm}
\begin{pspicture}(-4,-1)(4,3)
\rput(-12,0){
\rput(0,0){$\bullet$}
\rput(1.5,0){$\bullet$}
\rput(3,0){$\bullet$}
}
\rput(-5,0){
\rput(0,0){$\bullet$}
\rput(2,0){$\bullet$}
\rput(0,2){$\bullet$}
\rput(2,2){$\bullet$}
\psline(0,0)(0,2)
\psline(0,0)(2,2)
\psline(2,0)(0,2)
\psline(2,0)(2,2)
}
\rput(2,0){
\rput(0,0){$\bullet$}
\rput(2,0){$\bullet$}
\rput(0,2){$\bullet$}
\rput(2,2){$\bullet$}
\psline(0,0)(0,2)
\psline(2,0)(0,2)
\psline(2,0)(2,2)
}
\rput(9,0){
\rput(0,0){$\bullet$}
\rput(2,0){$\bullet$}
\rput(0,2){$\bullet$}
\rput(2,2){$\bullet$}
\psline(0,0)(0,2)
\psline(2,0)(2,2)
}
\end{pspicture}
\end{center}
\caption{Extremal Gorenstein}\label{extremalGor}
\end{figure}
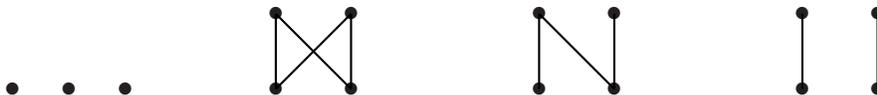

\medskip
We end this subsection by a few comments on the regularity of Hibi rings for planar distributive lattices. 
We consider the infinite distributive lattice  $\NN^2$ with the partial order  defined as 
$(i,j)\leq (k,\ell)$ if $i\leq k$ and $j\leq \ell.$  A {\em planar distributive lattice} is a finite sublattice $L$ of $\mathbb{N}^2$ with $(0,0)\in L$ which has the following property: for any $(i,j), (k,\ell)\in L$ there exists a chain $\cc$ in $L$ of the form $\cc: x_0<x_1<\cdots <x_t$ with $x_s=(i_s,j_s)$ for 
$0\leq s\leq t,$ $(i_0,j_0)=(i,j)$, and $(i_t,j_t)=(k,\ell)$, such that $i_{s+1}+j_{s+1}=i_s+j_s+1$ for all $s.$ Planar distributive lattice are also called {\em two-sided ladders}.

In the planar case, we may compute the regularity of $R(L)$ in terms of the cyclic sublattices of $L$. A sublattice of $L$ is called {\em cyclic} if it looks like in  Figure~\ref{cyclic} with some possible cut edges in between the squares. By a {\em square} in $L$ we mean a sublattice with elements $a,b,c,d$ such that 
$d\gtrdot b\gtrdot a$, $d\gtrdot c\gtrdot a$, and $b,c$ are incomparable. A {\em cut edge} of the lattice $L$ is an edge $\beta\gtrdot \alpha$ in its Hasse diagram with the property that, for every $\gamma\in L,$ we have either $\gamma\geq \beta$ or $\gamma\leq \alpha.$

\begin{figure}[hbt]
\begin{center}
\psset{unit=0.6cm}
\begin{pspicture}(1,-2)(5,5)
\rput(0,-1){
\rput(0,1){\pspolygon(2,2)(3,3)(4,2)(3,1)
\rput(2,2){$\bullet$}
\rput(3,3){$\bullet$}
\rput(4,2){$\bullet$}
\rput(3,1){$\bullet$}
}
\rput(0,3){\pspolygon(2,2)(3,3)(4,2)(3,1)
\rput(2,2){$\bullet$}
\rput(3,3){$\bullet$}
\rput(4,2){$\bullet$}
\rput(3,1){$\bullet$}
}
\psline(3,2)(3,1)
\rput(0,0){
\pspolygon(3,1)(2,0)(3,-1)(4,0)
\rput(3,1){$\bullet$}
\rput(2,0){$\bullet$}
\rput(3,-1){$\bullet$}
\rput(4,0){$\bullet$}
}
}
\end{pspicture}
\end{center}
\caption{Cyclic sublattice}
\label{cyclic}
\end{figure}
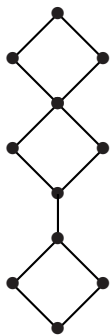

It is easily seen that, for a cyclic lattice $C$ with $r$ squares, we have $\reg R[C]=r.$
Of course, this may be derived with the formula of Theorem~\ref{regularity}, but we may give also a simpler argument as in  \cite{ERQ}. 
The ideal $I_C$ is generated by a regular sequence of length $r$ since $\ini_{<}(I_C)$ is generated by a regular sequence of monomials. Therefore, the Koszul complex of the generators of $I_C$ is the minimal free resolution of $R[C]$ and, hence, $\reg R[C]=r.$ 

\begin{Theorem}\cite{ERQ}\label{planar}
Let $L$ be a planar distributive lattice. Then $\reg R[L]$ equals the maximal number of squares in  a cyclic sublattice of $L.$
\end{Theorem}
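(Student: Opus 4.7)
The plan is to combine the regularity formula from Theorem~\ref{regularity} with a direct combinatorial analysis of cyclic sublattices, reducing via cut edges to the case of simple planar lattices. A cut edge $\alpha \lessdot \beta$ of $L$ corresponds to an element $p \in P$ comparable to every other element of $P$, giving an ordinal decomposition $P = P_1 \oplus \{p\} \oplus P_2$ and a splitting $L = L_1 \cup L_2$ sharing the cut edge. A direct substitution in Theorem~\ref{regularity} using $|P| = |P_1| + |P_2| + 1$ and $\rank P = \rank P_1 + \rank P_2 + 2$ yields
\[
\reg R[L] = (|P_1| - \rank P_1 - 1) + (|P_2| - \rank P_2 - 1) = \reg R[L_1] + \reg R[L_2].
\]
On the other hand, any cyclic sublattice of $L$ containing the cut edge splits naturally into cyclic sublattices of $L_1$ and $L_2$ sharing this edge, and a cyclic sublattice avoiding it sits entirely in one of them; hence the maximum number of squares is also additive under the decomposition. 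By induction on the number of cut edges, the problem reduces to simple planar lattices.

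For a simple planar $L$, I would establish the lower bound by constructing an explicit cyclic sublattice with $|P| - \rank P - 1$ squares. Using the planar embedding $L \subseteq \NN^2$, I would fix a longest chain of $P$ (of length $\rank P$) as a ``spine'' and, for each of the remaining $|P| - \rank P - 1$ elements of $P$, adjoin a new square to the previous one at a single shared vertex, tracing a diagonal staircase through $L$. The resulting sublattice is cyclic with precisely $|P| - \rank P - 1$ squares, which by Theorem~\ref{regularity} equals $\reg R[L]$.

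For the upper bound, I would show that any cyclic sublattice $C$ of $L$ with $r$ squares satisfies $r \leq |P| - \rank P - 1$. The sublattice $C$ is itself a planar distributive lattice $\MI(P_C)$, and since it consists of $r$ squares joined at cut vertices, it satisfies $|P_C| - \rank P_C - 1 = r$. The task then reduces to showing $|P_C| - \rank P_C \leq |P| - \rank P$, i.e.\ that the squares of $C$ and the chain of cut vertices between them together contribute no more ``slack'' than $|P|$ allows over the longest chain of $P$. The main obstacle is to make this comparison rigorous: one must relate the join-irreducibles of $C$ to those of $L$ through the planar embedding, carefully tracking how the $r$ incomparable pairs (one per square) and the chain of intermediate cut vertices consume distinct elements of $P$ without overlap. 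This local combinatorial analysis in $\NN^2$, combined with the additivity across cut edges, then delivers the theorem.
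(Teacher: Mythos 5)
Your overall strategy---prove directly that the maximal number of squares in a cyclic sublattice equals $|P|-\rank P-1$ and then invoke Theorem~\ref{regularity}---is a genuinely different route from the paper, which defers the proof to \cite{ERQ} and only remarks that $\reg R[L]=\deg h$ by Cohen--Macaulayness and that $\deg h$ is identified with the maximal number of squares via the Bj\"orner--Garsia--Stanley interpretation of the $h$-vector. Your cut-edge reduction is sound in outline. But both halves of the combinatorial identity are left unproved, and the lower bound is the heart of the matter. Saying that you ``adjoin a new square for each of the remaining $|P|-\rank P-1$ elements of $P$, tracing a diagonal staircase'' describes the desired object, not a construction: you must exhibit $|P|-\rank P-1$ pairwise disjoint incomparable pairs in $P$, ordered so that no member of a later pair lies below a member of an earlier one, and verify that the corresponding squares exist in $L$ and concatenate into a cyclic sublattice. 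This is sensitive to the choice of longest chain $C$ (for some choices $P\setminus C$ is not even a chain), and your sketch does not say which square a leftover element of $P$ is supposed to produce or why that square lies in $L$.

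For the upper bound, which you yourself flag as ``the main obstacle,'' the proposed comparison of the join-irreducibles of a cyclic sublattice $C$ with those of $L$ is both unjustified (join-irreducibles of a sublattice do not sit inside $P$ in any canonical way) and unnecessary. A direct argument closes this half: if the $i$-th square has bottom $a_i$ and top $d_i$, then $d_i=a_i\union\{p_i,q_i\}$ for an incomparable pair $p_i,q_i\in P$; since $d_i\subseteq a_{i+1}$ while $p_{i+1},q_{i+1}\notin a_{i+1}$, the $r$ pairs are pairwise disjoint, and any chain of $P$ meets each pair in at most one element, so $\rank P+1\leq |P|-r$, i.e.\ $r\leq |P|-\rank P-1$ (note this half does not even use planarity). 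I would substitute this for your upper-bound paragraph and concentrate the remaining effort on making the staircase construction precise---or else follow the paper's route through the $h$-vector.
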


The interested reader may find the complete proof in \cite{ERQ}. Here we only mention that the proof uses combinatorial interpretations of the components of the $h$-vector of $R[L]$ given in \cite[Section 2]{BGS}. It turns out that $\deg h$ is equal to the maximal numbers of squares in a cyclic sublattice of $L$ which explains the statement of the theorem.

The above theorem allows us, in relatively small examples,  to {\em read} the regularity of $R[L]$ by looking at the Hasse diagram of $L$ as
 in Figure~\ref{exampleplanar}.

\begin{figure}[hbt]
\begin{center}
\psset{unit=0.6cm}
\begin{pspicture}(3,-2)(5,5)
\rput(-4,-1){
\rput(0,0){$\bullet$}
\rput(2,0){$\bullet$}
\rput(4,0){$\bullet$}
\rput(6,0){$\bullet$}
\rput(0,2){$\bullet$}
\rput(2,2){$\bullet$}
\rput(4,2){$\bullet$}
\rput(6,2){$\bullet$}
\rput(4,4){$\bullet$}
\rput(6,4){$\bullet$}
\rput(4,6){$\bullet$}
\rput(6,6){$\bullet$}
\pspolygon(0,0)(6,0)(6,2)(0,2)
\pspolygon[fillstyle=crosshatch*](2,0)(4,0)(4,2)(2,2)
\pspolygon[fillstyle=crosshatch*](4,4)(6,4)(6,6)(4,6)
\pspolygon(4,0)(6,0)(6,6)(4,6)
\pspolygon[style=fyp,fillcolor=light](0,0)(2,0)(2,2)(0,2)
\pspolygon[style=fyp,fillcolor=light](4,2)(6,2)(6,4)(4,4)
\rput(3,-1){$\reg R[L]=2$}
}
\rput(6,-1){
\rput(0,0){$\bullet$}
\rput(2,0){$\bullet$}
\rput(4,0){$\bullet$}
\rput(6,0){$\bullet$}
\rput(0,2){$\bullet$}
\rput(2,2){$\bullet$}
\rput(4,2){$\bullet$}
\rput(6,2){$\bullet$}
\rput(4,4){$\bullet$}
\rput(6,4){$\bullet$}
\rput(4,6){$\bullet$}
\rput(6,6){$\bullet$}
\rput(2,4){$\bullet$}
\pspolygon(0,0)(6,0)(6,2)(0,2)
\pspolygon(4,0)(6,0)(6,6)(4,6)
\pspolygon[style=fyp,fillcolor=light](0,0)(2,0)(2,2)(0,2)
\pspolygon[style=fyp,fillcolor=light](4,4)(6,4)(6,6)(4,6)
\pspolygon[style=fyp,fillcolor=light](2,2)(4,2)(4,4)(2,4)
\rput(3,-1){$\reg R[L]=3$}
}
\end{pspicture}
\end{center}
\caption{}
\label{exampleplanar}
\end{figure}

One could ask  whether we can {\em read} as well the pseudo-Gorenstein property of $R[L]$ from the drawing of $L.$ A rigorous 
answer to this question was given in \cite{EHHSara}. Here, we briefly explain the picture of the pseudo-Gorensteiness 
without giving a formal proof. As we have seen in Proposition~\ref{echivalent}, $R[L]$ is pseudo-Gorenstein if and only if the leading 
coefficient of the numerator of the Hilbert series of $R[L]$ is equal to $1.$ According to the proof of \cite[Theorem 4]{ERQ}, this coefficient is equal to the number of cyclic sublattices of $L$ with the largest number of squares. Hence, $L$ (or $R[L]$) is pseudo-Gorenstein if and only if it contains exactly one cyclic sublattice with maximum number of squares. For example, the lattice of the left side in Figure~\ref{exampleplanar} is not pseudo-Gorenstein since, as we may see in figure,  there are at least two cyclic sublattices with two squares, while the lattice displayed in the right side of the same figure is pseudo-Gorenstein.

\subsection{Hibi ideals with linear relations}
\label{subsectlinrel}

In the remaining part of this section we will restrict to planar distributive lattices. Even with this restriction, the calculation of all the graded Betti numbers of the Hibi ideals seems to be very difficult. In this subsection we aim at describing the shape of those planar distributive lattices $L$ with the property that $I_L$ has linear relations. We say that $I_L$ has {\em linear relations} or that it is {\em linearly related} if $\beta_{1j}(I_L)=0$ for all $j\geq 4.$

The following lemma offers a major reduction in our study; see also \cite[Corollary 1.4]{EHH}.
\begin{Lemma}
Let $I\subset T$ be a graded ideal in the polynomial ring $T$ over a field $K$ with  finitely many indeterminates. If $I$ has a quadratic Gr\"obner basis, then $\beta_{1j}(I)=0$ for $j>4.$
\end{Lemma}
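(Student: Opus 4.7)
The plan is to reduce to the monomial case by Gröbner deformation and then analyze first syzygies of a monomial ideal generated in degree $2$ directly.

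First I would invoke the standard fact that for any graded ideal $I \subset T$ and any monomial order $<$, the graded Betti numbers satisfy
\[
\beta_{ij}(I) \leq \beta_{ij}(\ini_<(I)) \quad \text{for all } i, j.
\]
This comes from the flat family provided by the Gröbner deformation (see, e.g., \cite[Corollary 3.3.3]{HH10}), so it suffices to prove the analogous statement for $J = \ini_<(I)$. By hypothesis $J$ is a monomial ideal generated by quadrics; write $J = (m_1,\ldots,m_s)$ with $\deg m_k = 2$.

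Next I would use the standard presentation of the first syzygy module of a monomial ideal. For each pair $k < \ell$, set
\[
\sigma_{k\ell} = \frac{m_\ell}{\gcd(m_k,m_\ell)}\, e_k - \frac{m_k}{\gcd(m_k,m_\ell)}\, e_\ell,
\]
which is homogeneous of degree $\deg\lcm(m_k,m_\ell)$. It is a basic fact that the syzygies $\{\sigma_{k\ell}\}_{k<\ell}$ generate the first syzygy module of $J$ over $T$ (see, e.g., \cite[Lemma 1.5]{HH10}). Since
\[
\deg \lcm(m_k,m_\ell) \leq \deg m_k + \deg m_\ell = 4,
\]
every such generator of the syzygy module has degree at most $4$, which means $\beta_{1j}(J) = 0$ for $j > 4$.

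Combining these two steps, $\beta_{1j}(I) \leq \beta_{1j}(J) = 0$ for all $j > 4$, as desired. There is no serious obstacle here: the only delicate ingredient is the Gröbner-deformation inequality on Betti numbers, but that is well-documented in the references already cited in the survey, and everything else is a direct calculation with the Taylor-style syzygies of a monomial ideal generated in degree $2$.
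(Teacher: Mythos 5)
Your proposal is correct, and its first step is exactly the paper's: both arguments pass to $J=\ini_<(I)$ via the inequality $\beta_{1j}(I)\leq\beta_{1j}(\ini_<(I))$ from \cite[Corollary 3.3.3]{HH10}. Where you diverge is in handling the monomial ideal $J$. The paper simply cites \cite[Corollary 4]{HS}, a result on maximal shifts in resolutions of monomial ideals, to conclude $\beta_{1j}(J)=0$ for $j>4$. You instead argue directly: the syzygies $\sigma_{k\ell}=\frac{\lcm(m_k,m_\ell)}{m_k}e_k-\frac{\lcm(m_k,m_\ell)}{m_\ell}e_\ell$ generate the first syzygy module of any monomial ideal (this is the first differential of the Taylor complex, or equivalently the Schreyer/Buchberger syzygies, since the monomial generators trivially form a Gr\"obner basis), each has degree $\deg\lcm(m_k,m_\ell)\leq\deg m_k+\deg m_\ell=4$, and Nakayama then forces every minimal first syzygy into degree at most $4$. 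This is a sound and genuinely more self-contained route: it replaces an external citation with a two-line computation, at the cost of being specific to homological degree $1$ (the Herzog--Srinivasan result is part of a more general subadditivity statement for higher syzygies, which is why the paper reaches for it). The only cosmetic quibble is the pointer to ``\cite[Lemma 1.5]{HH10}'' for the generation statement, whose exact numbering you should verify, but the fact itself is standard and the argument does not otherwise depend on it.
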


\begin{proof}
By hypothesis, there exists a monomial order $<$ on $T$ such that $\ini_<(I)$ is generated in degree $2.$ Therefore, it follows  from \cite[Corollary 4]{HS} that
$\beta_{1j}(\ini_<(I))=0$ for $j>4$. Since $\beta_{1j}(I)\leq \beta_{1j}(\ini_<(I))$ (see, for example, \cite[Corollary 3.3.3]{HH10}), the desired conclusion follows.
\end{proof}

Almost all planar lattices may be viewed as convex polyominoes. For more information on this notion we refer the reader to \cite{EHH}. 
All convex polyominoes whose ideals have linear relations were characterized in \cite{EHH}. In this work, we follow the approach from 
\cite{EHH}, but we adapt some of the proofs to  Hibi ideals for planar lattices. The main tool in our study is the squarefree divisor complex
which allows  the calculation of the multi-graded Betti numbers of a toric ideal. 

We briefly recall the construction of the squarefree divisor complex which was introduced in \cite{BH}. Let $K$ be a field and $H\subset \NN^n$ an affine semigroup minimally generated by $h_1,\ldots,h_m$ where $h_i=(h_i(1),\ldots,h_i(n))\in \NN^n$. Let $K[H]\subset T= K[t_1,\ldots,t_n]$ be the semigroup ring assciated with $H.$ Then $K[H]=K[u_1,\ldots,u_m]$ where $u_i=\prod_{j=1}^n t_j^{h_i(j)}.$ 
Let $\varphi:R=K[x_1,\ldots, x_n]\to T$ be the $K$--algebra homomorphism induced by $x_i\mapsto u_i$ for $1\leq i
\leq n,$  and $I_H$ the kernel of $\varphi.$ The ideal $I_H$ is called the {\em toric ideal} of $K[H]$ or, simply, of $H.$ To each variable 
$x_i$ we assign the multi-degree $h_i.$ In this way, $K[H]$ and its toric ideal are $\ZZ^n$--graded $R$--modules. Thus $I_H$ and $K[H]$ 
 have $\ZZ^n$--graded minimal free resolutions. When all the monomials $u_i$ have the same degree, then $K[H]$ may be viewed as a standard 
 graded $K$--algebra by setting $\deg u_i=1$ for all $h.$ In this case, the degree of $t_1^{h(1)}\cdots t_n^{h(n)}\in K[H]$ will be denoted $|h|.$

Let $h\in H.$ The squarefree divisor complex $\Delta_h$ is defined as follows. Its facets are the sets $F=\{i_1,\ldots,i_k\}\subset [m]$
such that $u_{i_1}\cdots u_{i_k}| t_1^{h(1)}\cdots t_n^{h(n)}$ in $K[H].$ Let $\tilde{H}_i(\Gamma, K)$ be the i$^{th}$ reduced simplicial 
homology of a simplicial complex $\Gamma.$\footnote{For more information on the theory of simplicial complexes and their simplicial homology we refer the reader to \cite{Sta2} and \cite[Chapter 5]{BHbook}.}

\begin{Proposition}[\cite{BH}]
\label{bh}
With the notation and assumptions introduced one has  $$\Tor_i(K[H],K)_h\iso\tilde{H}_{i-1}(\Delta_h, K).$$ In particular,
$$\beta_{ih}(K[H])=\dim_K\tilde{H}_{i-1}(\Delta_h, K).$$
\end{Proposition}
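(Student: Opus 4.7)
The plan is to compute $\Tor_i^R(K[H],K)$ via the Koszul complex and then identify its $h$-graded piece with the reduced chain complex of $\Delta_h$.

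First I would use the Koszul complex $\mathcal{K}_\bullet=\mathcal{K}_\bullet(x_1,\ldots,x_m;R)$, which is a minimal graded free resolution of $K=R/(x_1,\ldots,x_m)$. Tensoring with $K[H]$ and taking homology gives
\[
\Tor_i^R(K[H],K)\;=\;H_i\bigl(K[H]\otimes_R\mathcal{K}_\bullet\bigr)\;=\;H_i\bigl(\mathcal{K}_\bullet(u_1,\ldots,u_m;K[H])\bigr).
\]
We declare $\deg x_i=h_i$ so that the presentation map $\varphi$ is $\ZZ^n$-graded; then every term in this Koszul complex is $\ZZ^n$-graded and so is its homology. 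Concretely, for $F=\{j_1<\cdots<j_i\}\subset[m]$ let $e_F=e_{j_1}\wedge\cdots\wedge e_{j_i}$; then
\[
\mathcal{K}_i(u_1,\ldots,u_m;K[H])\;=\;\bigoplus_{|F|=i} K[H]\,e_F,\qquad \deg e_F=\sum_{j\in F}h_j,
\]
with Koszul differential $\partial(e_F)=\sum_{k=1}^{i}(-1)^{k-1}u_{j_k}\,e_{F\setminus\{j_k\}}$.

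Second I would extract the $h$-graded piece. Since $K[H]\subset T$ is an $\NN^n$-graded subalgebra, $(K[H])_{h'}$ equals $K\cdot t^{h'}$ if $h'\in H$ and $0$ otherwise. Hence
\[
\bigl[\mathcal{K}_i(u;K[H])\bigr]_h \;=\; \bigoplus_{\substack{|F|=i\\ h-\sum_{j\in F}h_j\,\in\, H}} K\cdot t^{\,h-\sum_{j\in F}h_j}\,e_F.
\]
The condition $h-\sum_{j\in F}h_j\in H$ is exactly the condition $\prod_{j\in F}u_j\mid t^h$ in $K[H]$, i.e.\ $F\in\Delta_h$. Thus as a $K$-vector space, the $h$-piece of $\mathcal{K}_i$ has basis indexed by the $(i-1)$-dimensional faces of $\Delta_h$, which matches $C_{i-1}(\Delta_h;K)$ in the reduced oriented chain complex.

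Third I would match the differentials. Identifying $t^{h-\sum_{j\in F}h_j}e_F$ with the oriented simplex $[j_1,\ldots,j_i]$, the Koszul boundary sends this to
\[
\sum_{k=1}^{i}(-1)^{k-1}\,t^{\,h-\sum_{j\in F\setminus\{j_k\}}h_j}\,e_{F\setminus\{j_k\}},
\]
which is precisely the simplicial boundary $\sum_{k}(-1)^{k-1}[j_1,\ldots,\widehat{j_k},\ldots,j_i]$. Therefore the $h$-graded subcomplex of $\mathcal{K}_\bullet(u;K[H])$ is isomorphic to the augmented oriented chain complex of $\Delta_h$ shifted by one, and taking homology gives
\[
\Tor_i^R(K[H],K)_h \;\cong\; \tilde H_{i-1}(\Delta_h;K),
\]
from which $\beta_{i,h}(K[H])=\dim_K\tilde H_{i-1}(\Delta_h;K)$ follows. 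The main technical point to verify carefully is the sign agreement and the basis identification in degree $h$ (in particular handling $i=0$, where $\Delta_h=\{\emptyset\}$ exactly when $h=0$, recovering $\Tor_0(K[H],K)_0=K$); beyond this bookkeeping the argument is routine.
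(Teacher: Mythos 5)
Your proof is correct. The paper gives no proof of Proposition~\ref{bh} (it is quoted from \cite{BH}), and your computation --- resolving $K$ by the Koszul complex on $x_1,\ldots,x_m$, tensoring with $K[H]$, and identifying the degree-$h$ strand with the augmented oriented chain complex of $\Delta_h$, including the sign check and the $i=0$ case --- is exactly the standard argument of that reference.
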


Let $H'$ be a subsemigroup of $H$ generated by a subset of the set of generators  of $H$, and let $R'$ be the polynomial ring over $K$ in the variables $x_i$ with $h_i$ generator of $ H^\prime$. Furthermore, let $\FF'$ be the $\ZZ^{n}$-graded  free $R'$-resolution  of $K[H']$. Then, since $R$ is a flat $R'$-module,  $\FF'\tensor_{S'}S$ is a $\ZZ^n$-graded free $S$-resolution of $S/I_H'S$. The inclusion $K[H']\to K[H]$ induces a $\ZZ^n$-graded complex homomorphism $\FF'\tensor_{S'}S\to \FF$. Tensoring this complex homomorphism with $K=R/\mm$, where $\mm$ is the graded maximal ideal of $R$,  we obtain the following sequence of isomorphisms and  natural maps of $\ZZ^n$-graded $K$-modules
\[
\Tor_i^{R'}(K[H'],K)\iso H_i(\FF'\tensor_{R'}K)\iso H_i(\FF'\tensor_{R'}R)\tensor_RK)\to H_i(\FF\tensor_RK)\iso \Tor_i^R(K[H],K).
\]

With an additional assumption on $H^\prime$ we get even more.

\begin{Corollary}\cite[Corollary 2.3]{EHH}
\label{refinement}
With the notation and assumptions introduced, let  $H'$ be a subsemigroup of $H$ generated by a subset of the set of generators  of $H$, and let  $h$ be an
element of $H'$ with the property  that $h_i\in H'$ whenever  $h-h_i\in H$. Then  the natural $K$-vector space homomorphism $\Tor_i^{R'}(K[H'],K)_h\to
\Tor_i^R(K[H],K)_h$ is an isomorphism for all $i$.
\end{Corollary}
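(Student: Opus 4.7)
The plan is to realize both Tor groups via Proposition~\ref{bh} as reduced simplicial homologies of the squarefree divisor complex, and then show that the two complexes actually coincide. Write $\Delta_h^H$ (respectively $\Delta_h^{H'}$) for the squarefree divisor complex of $h$ with respect to $H$ (respectively $H'$); identifying the generators of $H'$ with a subset of those of $H$, one has the tautological inclusion $\Delta_h^{H'} \subseteq \Delta_h^H$. By Proposition~\ref{bh},
\[
\Tor_i^R(K[H],K)_h \iso \tilde{H}_{i-1}(\Delta_h^H,K), \qquad \Tor_i^{R'}(K[H'],K)_h \iso \tilde{H}_{i-1}(\Delta_h^{H'},K).
\]
These isomorphisms can be realized concretely by computing $\Tor$ via the Koszul complexes on the $x_i$'s; inspecting this computation shows that the natural map in the statement is transported to the map on reduced simplicial homology induced by the inclusion $\Delta_h^{H'} \hookrightarrow \Delta_h^H$. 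It therefore suffices to prove $\Delta_h^H = \Delta_h^{H'}$.

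The heart of the argument is the nontrivial inclusion $\Delta_h^H \subseteq \Delta_h^{H'}$. I would proceed as follows. Let $F = \{i_1,\dots,i_k\} \in \Delta_h^H$, so that $h - h_{i_1} - \cdots - h_{i_k} \in H$; write this element as $\sum_l c_l h_l$ with $c_l \in \NN$. For each $j \in \{1,\ldots,k\}$,
\[
h - h_{i_j} = \sum_{l \neq j} h_{i_l} + \sum_l c_l h_l \in H,
\]
and the hypothesis then forces $h_{i_j}$ to be a generator of $H'$. The same reasoning applied to every $l$ with $c_l > 0$ shows that $h_l$ is also a generator of $H'$. Consequently $h - \sum_j h_{i_j} = \sum_l c_l h_l \in H'$, and therefore $F \in \Delta_h^{H'}$, as desired.

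The main obstacle is precisely this combinatorial step: the hypothesis must be applied not only to the generators labeling the face $F$ but also to every generator appearing in some $H$-decomposition of the leftover element $h - \sum_j h_{i_j}$; without this double use of the assumption one only obtains that the vertices of $F$ lie in $H'$, which is not enough. Once the equality of complexes is established, naturality of the Bruns--Herzog identification immediately gives that the natural map of Tor groups is an isomorphism in every homological degree.
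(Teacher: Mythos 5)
The survey itself offers no proof of this corollary: it only constructs the natural map via $\FF'\otimes_{R'}R\to \FF$ and then refers the reader to \cite{EHH}, so there is no in-paper argument to compare against line by line. Your proof is correct and is the standard one for this statement: identifying the natural map with the map induced by the inclusion $\Delta_h^{H'}\hookrightarrow \Delta_h^{H}$ is exactly the intended use of Proposition~\ref{bh}, and your two-stage application of the hypothesis --- first to the generators $h_{i_j}$ indexing a face $F\in\Delta_h^{H}$ (each $h-h_{i_j}\in H$, hence $h_{i_j}\in H'$ and, by minimality of the generating set, $h_{i_j}$ is a generator of $H'$), and then to every generator occurring in a decomposition of the leftover element $h-\sum_j h_{i_j}$ --- is precisely the point that makes the leftover land in $H'$ rather than merely in $H$, giving $\Delta_h^{H}=\Delta_h^{H'}$.
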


For the proof of this corollary  we refer to \cite{EHH}.

\begin{Definition}
Let $H\subset \NN^n$ be an affine semigroup generated by $h_1,\ldots, h_m$. An affine subsemigroup $H'\subset H$ generated by  a subset of $\{h_1,\ldots, h_m\}$ is called a {\em homological pure} subsemigroup of $H$ if for all $h\in H'$ and all $h_i$ with $h-h_i\in H$  it follows that $h_i\in H'$.
\end{Definition}

In other words, $H^\prime$ is a homological pure subsemigroup of $H$ if it satisfies the hypothesis of Corollary~\ref{refinement}.
We also say that $K[H^\prime]$ is a {\em homological pure subring} of $K[H].$
 Corollary~\ref{refinement} has the following consequence.

\begin{Corollary}\cite[Corollary 2.4]{EHH}
\label{homologicallypure}
Let $H'$ be a homologically pure subsemigroup  of $H$. Then $$\Tor_i^{R'}(K[H'],K)\to  \Tor_i^R(K[H],K)$$   is injective for all $i$. In other words, if $\FF'$ is the minimal $\ZZ^n$-graded free $R'$-resolution of $K[H']$ and $\FF$ is the minimal $\ZZ^n$-graded free $R$-resolution of $K[H]$, then the complex homomorphism
$\FF'\tensor R\to \FF$ induces an injective map $\FF'\tensor K\to \FF\tensor K$. In particular, any minimal set of generators of $\Syz_i(K[H'])$ is part of a minimal set of generators  of  $\Syz_i(K[H])$. Moreover, $\beta_{ij}(I_{H'})\leq \beta_{ij}(I_H)$ for all $i$ and $j$.
\end{Corollary}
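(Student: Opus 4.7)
The plan is to reduce everything to Corollary~\ref{refinement}, which is already available. The hypothesis of homological purity says exactly that for \emph{every} $h\in H'$, the degree-wise hypothesis of Corollary~\ref{refinement} holds, so I would first invoke that corollary uniformly to obtain: for each $h\in H'$ and each $i$, the natural map
\[
\Tor_i^{R'}(K[H'],K)_h \To \Tor_i^R(K[H],K)_h
\]
is an isomorphism of $K$-vector spaces.

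Next I would observe that $K[H']$ is a $\ZZ^n$-graded $R'$-module supported only in multi-degrees lying in $H'$, so the minimal $\ZZ^n$-graded free resolution $\FF'$ has all its free summands shifted by elements of $H'$. Tensoring with $K$ over $R'$ preserves the $\ZZ^n$-grading, hence $\Tor_i^{R'}(K[H'],K)_h=0$ for every $h\notin H'$. Combining this with the previous paragraph, the $\ZZ^n$-graded homomorphism $\Tor_i^{R'}(K[H'],K) \to \Tor_i^R(K[H],K)$ is componentwise either an isomorphism (when $h\in H'$) or has trivial source (when $h\notin H'$), and is therefore injective.

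For the syzygy assertion I would translate the injectivity via the standard identification: a minimal system of generators of $\Syz_i(K[H'])$ corresponds, under the functor $-\tensor_{R'}K$, to a $K$-basis of $\Tor_i^{R'}(K[H'],K)$, and similarly for $K[H]$. The injection on Tor lifts such a basis to a $K$-linearly independent subset of $\Tor_i^{R}(K[H],K)$, which can be completed to a basis; pulling this basis back to $\Syz_i(K[H])$ via the complex map $\FF'\tensor_{R'} R\to \FF$ produced in the paragraph preceding Corollary~\ref{refinement} shows that the original minimal generators of $\Syz_i(K[H'])$ extend to a minimal generating set of $\Syz_i(K[H])$. The inequality $\beta_{ij}(I_{H'})\leq \beta_{ij}(I_H)$ is then immediate by comparing dimensions of the appropriate graded strands of $\Tor$ (equivalently, of $\Tor_{i+1}$ of the respective quotients), since injections cannot decrease under the shift from $K[H]$ to $I_H$.

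The work is essentially bookkeeping once Corollary~\ref{refinement} is in hand; the only subtlety to be careful about is that the map appearing here is the same natural map constructed in the discussion preceding Corollary~\ref{refinement} (coming from the complex homomorphism $\FF'\tensor_{R'}R\to \FF$), so that ``isomorphism in each degree $h\in H'$'' combines with ``source vanishes otherwise'' to give a single global injection rather than only a family of degreewise statements. This is automatic from the construction, so I do not expect a genuine obstacle.
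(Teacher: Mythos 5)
Your argument is correct: homological purity says precisely that every $h\in H'$ satisfies the hypothesis of Corollary~\ref{refinement}, and since $\Tor_i^{R'}(K[H'],K)$ is supported only in multidegrees lying in $H'$ (all shifts in the minimal $\ZZ^n$-graded resolution of $K[H']$ over $R'$ belong to $H'$), the natural graded map is in each multidegree either an isomorphism or has zero source, hence is globally injective, and the syzygy and Betti-number assertions follow by the standard identification of $\Tor_i\tensor$-bases with minimal generators and by summing multigraded Betti numbers over $|h|=j$. The survey itself defers the proof to \cite{EHH}, and your reduction to Corollary~\ref{refinement} together with the vanishing of the source outside $H'$ is exactly the intended argument, so there is nothing to add.
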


For the proof, see \cite{EHH}.

Let $L$ be a planar distributive lattice. We may assume that $[(0,0),(m,n)]$ where $m,n$ are some positive integers, is the smallest interval of $\NN^2$ which contains $L.$ In particular, we may assume that $L$ contains the vertices of the squares $[(0,0),(1,1)]$ and $[(m-1,n-1),(m,n)].$ There is no loss of generality in this latter assumption since it 
 simply means that that the poset $P$ of the join-irreducible elements of $L$ has  two minimal and two maximal elements. 
If $P$ has a unique minimal element, say $p,$ then $R[\MI(P)]$ and $R[\MI(P\setminus\{p\})]$ have the same Betti numbers. The same happens when $P$ contains a unique maximal element.
We also may assume that $m,n\geq 2.$ If, for instance,  $n=1,$ then we know, by Theorem~\ref{linear resolution}, that $I_L$ has a linear resolution, thus, in particular, it has linear relations. 

Let $A=\{i_1,\ldots,i_s\}\subset [m]$ be a set of integers and let $L_A$ be the subset of $L$ obtained by removing all the elements $(i,j)$ of
$L$ with $i\in A.$ Then $L_A$ is a sublattice of $L.$ Indeed, if $(i,j),(k,\ell)\in L_A,$ then $i,k\notin A,$ thus $\min\{i,k\}$  and 
$\max\{i,k\}$ do not belong to $A$ as well. Thus $L_A$ is a sublattice of $L.$ Analogously, we may consider the same procedure by using a subset $B\subset [n]$ and get a sublattice $L_B$ of $L.$ We call a sublattice of $L$ obtained in one of the above ways an {\em induced sublattice} of $L.$ Moreover, one easily sees that if $L^\prime$ is an induced sublattice of $L,$ then $R[L^\prime]$ is a homological pure subring of $R[L].$ On the other hand, let us note that not any sublattice of $L$ is an induced one.

Corollary~\ref{homologicallypure} has the following consequence. 

\begin{Corollary}\label{embed}
Let $A\subset [m]$ and $B\subset [n].$ With the above notation, we have
\[
\beta_{ij}(I_{L_A})\leq \beta_{ij}(L) \text{ and } \beta_{ij}(I_{L_B})\leq \beta_{ij}(L)
 \] for all $i,j.$
Moreover, each minimal relation of $I_{L_A}$ or $I_{L_B}$ is a minimal relation of $I_L.$
\end{Corollary}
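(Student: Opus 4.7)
The plan is to obtain the statement as a direct application of Corollary~\ref{homologicallypure} to the inclusions $R[L_A] \hookrightarrow R[L]$ and $R[L_B] \hookrightarrow R[L]$. As observed in the discussion preceding the statement, both $L_A$ and $L_B$ are induced sublattices of $L$, and it has been asserted there that any such inclusion gives rise to a homological pure subring. Once this purity is in hand, both conclusions follow immediately from the two assertions of Corollary~\ref{homologicallypure}.

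To sketch the homological purity verification, let $L' \in \{L_A, L_B\}$ and suppose $h \in H_{L'}$ satisfies $h - u_\beta \in H_L$ for some $\beta \in L$. Writing $h$ as a non-negative integer combination of generators $u_{\alpha_k}$ with $\alpha_k \in L'$, the exponent of any variable $x_p \in \{x_1,\dots,x_n\}$ in the monomial $h \in K[t,x_1,\dots,x_n]$ can be positive only when the join-irreducible $p$ lies in some $\alpha_k \in L'$. Removing a row (respectively, a column) of $L$ precisely eliminates a prescribed set of join-irreducibles of $P$ from appearing in any such $u_{\alpha_k}$. Since $u_\beta \mid h$ in $K[t,x_1,\dots,x_n]$, every variable appearing in $u_\beta$ must appear in $h$, and the above exclusion then forces $\beta \in L'$.

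With homological purity established, Corollary~\ref{homologicallypure} supplies directly the inequality $\beta_{ij}(I_{L'}) \leq \beta_{ij}(I_L)$ for all $i, j$, together with the statement that, for every $i$, any minimal set of generators of $\Syz_i(R[L'])$ is part of a minimal set of generators of $\Syz_i(R[L])$; taking $i = 2$ in the latter yields that every minimal relation of $I_{L'}$ is a minimal relation of $I_L$, which is the second claim of the corollary. The main obstacle I anticipate is the combinatorial bookkeeping in the homological purity step — unpacking the identification of lattice points of the planar representation of $L$ with join-irreducibles of $P$, and verifying rigorously that deleting a row or column excludes exactly the expected set of join-irreducibles from every $u_\alpha$ with $\alpha \in L'$. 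Once this identification is made precise, the remainder of the argument is purely formal.
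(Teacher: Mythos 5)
Your overall route is the same as the paper's: Corollary~\ref{embed} is stated there as an immediate consequence of Corollary~\ref{homologicallypure} once one knows that the induced sublattices $L_A$ and $L_B$ give homologically pure subsemigroups, and your deduction of both conclusions from that corollary (including reading off the ``minimal relations'' claim from the statement about generators of the syzygy modules) is fine. The gap is in your verification of homological purity. You work with the presentation $u_\alpha=t\prod_{p_i\in\alpha}x_i$ and assert that removing a row of $L$ ``precisely eliminates a prescribed set of join-irreducibles of $P$ from appearing in any $u_{\alpha_k}$ with $\alpha_k\in L'$.'' This is false. Write $P=C_1\cup C_2$ with $C_1=\{c_1<c_2<\cdots\}$, so that $(i,j)\in L$ corresponds to the poset ideal consisting of the first $i$ elements of $C_1$ and the first $j$ elements of $C_2$. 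For $L=[(0,0),(2,1)]$ and $A=\{1\}$ one gets $L_A=\{\emptyset,\{c_1,c_2\},\{d_1\},\{c_1,c_2,d_1\}\}$, which still contains elements in which $c_1$ appears; no join-irreducible is excluded at all, so your argument gives no way to rule out $\beta=\{c_1\}$ (indeed $u_{\{c_1\}}=tx_{c_1}$ divides $h=u_{\emptyset}u_{\{c_1,c_2\}}=t^2x_{c_1}x_{c_2}$ and every variable of $u_\beta$ occurs in $h$). Note also that you only use divisibility of $u_\beta$ into $h$ in the ambient polynomial ring, whereas purity concerns the stronger hypothesis $h-u_\beta\in H$; as the example of the sublattice $\{\emptyset,\{1,2\}\}$ of $B_2$ shows, divisibility alone can never suffice, since there $h-u_{\{1\}}\in H$ actually holds and purity genuinely fails for that (non-induced) sublattice.

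There are two ways to repair the step. The cleanest is to switch to the presentation the paper itself uses in the proof of Theorem~\ref{linrel} (quoting \cite{Q}): $R[L]\cong K[H]$ with $H$ generated by the monomials $u_{ij}=s_it_j$, $(i,j)\in L$. In that presentation your argument becomes literally correct: every generator of $H_{L_A}$ omits the variables $s_i$ with $i\in A$, so the $s_i$-exponent of any $h\in H_{L_A}$ vanishes for $i\in A$; if $h-u_{k\ell}\in H$, then $s_k$ divides $h$, forcing $k\notin A$ and hence $(k,\ell)\in L_A$ (and symmetrically for columns). Alternatively, staying with the Hibi presentation, suppose $h=\sum_k u_{\alpha_k}$ with $\alpha_k\in L_A$ and also $h=u_\beta+\sum_l u_{\gamma_l}$ with $\beta,\gamma_l\in L$. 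Since each $\alpha\cap C_1$ is an order ideal of the chain $C_1$, the exponent of $x_{c_i}$ in $h$ equals $\#\{k:|\alpha_k\cap C_1|\geq i\}$ and also equals the number of members of the multiset $\{|\beta\cap C_1|\}\cup\{|\gamma_l\cap C_1|\}$ that are at least $i$; as both decompositions have the same number of factors (the $t$-degree of $h$), the two multisets coincide, so $|\beta\cap C_1|$ equals some $|\alpha_k\cap C_1|\notin A$ and $\beta\in L_A$. With purity established by either route, the rest of your proof goes through.
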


This corollary will be useful to isolate the Hibi ideals of planar lattices which have linear relations.

We begin with the following lemma which shows, in particular,  that in order to get linear relations for $I_L$ it is enough to consider 
$L$ a simple lattice.

\begin{Lemma}\label{tensor}
Let $L=\MI(P)\subset [(0,0),(m,n)]$ be a planar distributive lattice which contains the vertices of the squares $[(0,0),(1,1)]$ and $[(m-1,n-1),(m,n)].$ If $L$ is not simple, then $\beta_{14}(I_L)\neq 0.$
\end{Lemma}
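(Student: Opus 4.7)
The plan is to use the non-simplicity of $L$ to split $I_L$ as a sum of two Hibi ideals in disjoint sets of variables, and then to read off $\beta_{1,4}(I_L)$ via a K\"unneth-style computation of the minimal free resolution.

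First I would translate non-simplicity into a cut-edge structure. By definition there exist $\alpha<\beta$ in $L$ such that every $\gamma\in L$ satisfies $\gamma\leq\alpha$ or $\gamma\geq\beta$; any intermediate element would violate both conditions, so $\alpha\lessdot\beta$. Put $L_1=\{\gamma\in L:\gamma\leq\alpha\}$ and $L_2=\{\gamma\in L:\gamma\geq\beta\}$; these are disjoint sublattices with $L=L_1\sqcup L_2$. A short case check using the incomparability of $(1,0)$ and $(0,1)$ rules out every option except $(1,0),(0,1)\leq\alpha$: if either were $\geq\beta$ one would either reach a contradiction with the other or force $\beta\leq(0,0)$, which is impossible since $\alpha<\beta$. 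Hence $(1,1)=(1,0)\vee(0,1)\leq\alpha$ and the whole bottom corner square lies in $L_1$; the symmetric argument places the top corner square in $L_2$. In particular each of $L_1$ and $L_2$ contains an incomparable pair.

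Next I would turn this into an algebraic splitting. If $\gamma\in L_1$ and $\delta\in L_2$ then $\gamma\leq\alpha<\beta\leq\delta$, so they are comparable; hence every incomparable pair in $L$ lies in one of $L_1,L_2$. Writing $S_i=K[\{x_\gamma:\gamma\in L_i\}]$ and letting $I_i\subset S_i$ be the Hibi ideal of $L_i$, one has $S=K[L]=S_1\otimes_K S_2$ and $I_L=I_1S+I_2S$, so
\[
S/I_L \;\cong\; (S_1/I_1)\otimes_K(S_2/I_2),
\]
with $\mu(I_1),\mu(I_2)\geq 1$ from the corner squares.

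Finally I would invoke the standard K\"unneth construction. Let $F_\bullet$ and $G_\bullet$ be the minimal graded free resolutions of $S_1/I_1$ and $S_2/I_2$ over $S_1$ and $S_2$. Since each $F_p$ is $K$-free, the total complex $T_\bullet:=\mathrm{Tot}(F_\bullet\otimes_K G_\bullet)$ is acyclic with $H_0(T_\bullet)\cong S/I_L$; its terms $\bigoplus_{p+q=n}F_p\otimes_K G_q$ are $S$-free, and the differentials land in $\mm_{S_1}S+S\mm_{S_2}=\mm_S$, so $T_\bullet$ is the minimal graded $S$-resolution of $S/I_L$. Reading off Betti numbers,
\[
\beta_{2,4}^S(S/I_L)\;=\;\sum_{\substack{p+q=2\\ k+l=4}}\beta_{p,k}^{S_1}(S_1/I_1)\,\beta_{q,l}^{S_2}(S_2/I_2),
\]
and the summand $(p,q,k,l)=(1,1,2,2)$ alone contributes $\mu(I_1)\mu(I_2)\geq 1$. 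Since $\beta_{1,4}(I_L)=\beta_{2,4}^S(S/I_L)$, this yields $\beta_{1,4}(I_L)\geq 1$, as required.

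The main subtlety is the K\"unneth step, in particular checking that $T_\bullet$ is both acyclic and $S$-minimal; once that is granted the degree-$4$ Betti number is forced to be nonzero by the purely combinatorial observation that each half of the cut contributes an independent quadratic generator, producing a Koszul-type syzygy between them.
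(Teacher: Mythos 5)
Your argument is correct and follows essentially the same route as the paper: non-simplicity yields a cut splitting $S/I_L$ as a tensor product of two smaller Hibi rings, and the total complex of the tensor product of their minimal resolutions gives $\beta_{2,4}(S/I_L)\geq\beta_{1,2}(S_1/I_1)\,\beta_{1,2}(S_2/I_2)\geq 1$. The only cosmetic difference is that the paper phrases the splitting via the ordinal sum $P=P_1\oplus\{p\}\oplus P_2$ of the join-irreducible poset rather than via a cut edge of $L$, and you are in fact more explicit than the paper about why the corner-square hypothesis forces each factor to contribute a quadric.
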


\begin{proof}
The claim of the lemma is equivalent to $\beta_{24}(R[L])\neq 0.$ Since $L$ is not simple, there exists $p\in P$ such that any other element of $P$ is comparable to $p.$ Let $P_1=\{q\in P: q<p\}$ and $P_2=\{q\in P: q>p\}.$ Then $P$ is the ordinal sum $P=P_1\oplus \{p\}\oplus P_2$ and $R[\MI(P)]\cong R[\MI(P_1)]\otimes R[\MI(P_2)].$  Let $\FF_1\to R[\MI(P_1)]\to 0$ and $\FF_2\to R[\MI(P_2)]\to 0$ 
be the minimal free resolutions of $R[\MI(P_1)]$ and $R[\MI(P_2)].$ Then the total complex of $\FF_1\otimes \FF_2$ is the minimal 
$S$--free resolution of $R[L]$. This implies that $\beta_{24}(R[L])\neq 0$ since $\beta_{12}(R[\MI(P_1)])\neq 0$ and 
$\beta_{12}(R[\MI(P_2)])\neq 0.$
\end{proof}

The above lemma combined with Corollary~\ref{homologicallypure} lead to the following type of arguments. Assume that, given a simple planar distributive lattice $L,$ we may find a subset $A\subset [m]$  such that $L_A$ is not simple and contains the extremal corners $[(0,0),(1,1)]$ and $[(m-1,n-1),(m,n)]$. Then, it follows that $I_{L_A}$ is not linearly related. This will imply that $I_L$ is not linearly related, too. 

The following theorem characterizes the simple planar distributive lattices $L$ with linearly related Hibi ideals for $m,n\geq 3$. The case $m=2$ or $n=2$ is settled by the following lemma.

\begin{Lemma}\label{lema0}
Let $L$ be a simple planar distributive lattice $L\subset [(0,0),(m,n)]$ and assume that $m=2$ or $n=2.$ If $I_L$ is linearly related, then at most one of the vertices $(m,0)$ and $(0,n)$ do not belong to $L.$
\end{Lemma}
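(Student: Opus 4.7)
The plan is to prove the contrapositive. By swapping the two coordinates, the case $m=2$ reduces to the case $n=2$, so assume $n=2$ and that both $(m,0)$ and $(0,2)$ are missing from $L$; I aim to exhibit a minimal first syzygy of $I_L$ of degree $4$ by passing to a suitable induced sublattice.

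Since $L$ is a planar distributive sublattice of $\NN^2$ containing the two corner squares $[(0,0),(1,1)]$ and $[(m-1,1),(m,2)]$, each of its three rows is an interval, of the shape
\[
[(0,0),(r_0,0)],\quad [(0,1),(m,1)],\quad [(l_2,2),(m,2)]
\]
with $1\le r_0, l_2\le m-1$. The first place where simplicity is used is to show $l_2\le r_0$: if instead $l_2>r_0$, then $(r_0+1,1)$ is join-irreducible in $L$ (its only cover below is $(r_0,1)$, since $(r_0+1,0)\notin L$), and one checks that every other join-irreducible of $P$ is comparable to $(r_0+1,1)$, giving a cut point of $P$, contradicting simplicity.

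The main construction is to keep only the three columns $\{0,l_2,m\}$. Setting $A=\{1,\dots,m-1\}\setminus\{l_2\}\subset[m]$, direct enumeration gives
\[
L_A=\{(0,0),(l_2,0),(0,1),(l_2,1),(m,1),(l_2,2),(m,2)\},
\]
a seven element sublattice of $L$, and the column relabelling $0\mapsto 0,\ l_2\mapsto 1,\ m\mapsto 2$ exhibits a lattice isomorphism between $L_A$ and
\[
L_0=\{(0,0),(1,0),(0,1),(1,1),(2,1),(1,2),(2,2)\}.
\]

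To conclude, I compute $\beta_{1,4}(I_{L_0})$ directly. The only incomparable pairs in $L_0$ are $\{(1,0),(0,1)\}$ and $\{(2,1),(1,2)\}$, so $I_{L_0}$ is generated by the two binomials
\[
f_1=x_{(1,0)}x_{(0,1)}-x_{(0,0)}x_{(1,1)},\qquad f_2=x_{(2,1)}x_{(1,2)}-x_{(1,1)}x_{(2,2)}.
\]
The poset of join-irreducibles of $L_0$ has four elements, so $\dim R[L_0]=5$ and $I_{L_0}$ has codimension $2$ in a polynomial ring in $|L_0|=7$ variables; being generated by two elements realizing this codimension, $(f_1,f_2)$ is a regular sequence and the minimal free resolution of $I_{L_0}$ is the Koszul complex on $f_1,f_2$. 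In particular $\beta_{1,4}(I_{L_0})=1$. Applying Corollary~\ref{embed} to the induced sublattice $L_A$ then gives
\[
\beta_{1,4}(I_L)\ge \beta_{1,4}(I_{L_A})=\beta_{1,4}(I_{L_0})=1,
\]
so $I_L$ is not linearly related. The step I anticipate to be most delicate is the choice of $A$: both missing corners together with the simplicity inequality $l_2\le r_0$ are needed to guarantee that the three retained columns produce precisely the seven element lattice $L_0$ rather than a degenerate smaller sublattice.
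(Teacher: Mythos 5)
Your proof is correct and follows the paper's own route: both arguments pass to the induced three-column sublattice isomorphic to the two-squares-meeting-at-a-corner lattice of Figure~\ref{prooflemma0} and invoke Corollary~\ref{embed} to transfer the nonvanishing of $\beta_{14}$. You simply make explicit two points the paper leaves implicit, namely that simplicity forces $l_2\le r_0$ so that this induced sublattice actually exists, and that the resulting seven-element lattice is a codimension-two complete intersection whose Koszul syzygy lives in degree $4$.
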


\begin{proof}
Let us take, fir example, $n=2.$ 
\begin{figure}[hbt]
\begin{center}
\psset{unit=0.5cm}
\begin{pspicture}(3,-1)(0,3)
\rput(-1,-1){
\rput(0,0){$\bullet$}
\rput(2,0){$\bullet$}
\rput(0,2){$\bullet$}
\rput(2,2){$\bullet$}
\rput(4,2){$\bullet$}
\rput(4,4){$\bullet$}
\rput(2,4){$\bullet$}
\pspolygon(0,0)(2,0)(2,2)(0,2)
\pspolygon(2,2)(4,2)(4,4)(2,4)
}

\end{pspicture}
\end{center}
\caption{}
\label{prooflemma0}
\end{figure}
If both vertices  $(m,0)$ and $(0,2)$ do not belong to $L,$ then we find an induced sublattice  of the form displayed in Figure~\ref{prooflemma0} which has the associated  ideal  not linearly related. 
\end{proof}

\begin{Theorem}\label{linrel}
Let $L$ be a simple planar distributive lattice, $L\subset [(0,0),(m,n)]$ with $m,n\geq 2$. The ideal $I_L$ is linearly related if and only if  the following conditions hold:
\begin{itemize}
	\item [(i)] At most one of the vertices $(m,0)$ and $(0,n)$ does not belong to $L.$
	\item [(ii)] The vertices $(1,n-1)$ and $(m-1,1)$ belong to $L.$
\end{itemize}
\end{Theorem}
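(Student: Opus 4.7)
The plan is to combine the quadratic Gr\"obner basis result (Theorem~\ref{GBHibi}) with the squarefree divisor complex technique (Proposition~\ref{bh}). Since $I_L$ has a quadratic Gr\"obner basis, the lemma preceding this theorem shows that $\beta_{1j}(I_L)=0$ for every $j>4$. Thus $I_L$ is linearly related if and only if $\beta_{14}(I_L)=\beta_{24}(R[L])=0$. Regarding $R[L]$ as a semigroup ring, Proposition~\ref{bh} translates this into the topological requirement that $\tilde{H}_1(\Delta_h,K)=0$ for every multi-degree $h$ of total $t$-degree~$4$, whose facets are the $4$-subsets $\{\alpha_1,\alpha_2,\alpha_3,\alpha_4\}\subset L$ with $u_{\alpha_1}u_{\alpha_2}u_{\alpha_3}u_{\alpha_4}=t^4\prod_i x_i^{h(i)}$.

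For the necessity of (i) and (ii), I would argue by contraposition using Corollary~\ref{embed}. Assuming (ii) fails, say $(1,n-1)\notin L$, I would restrict $L$ by removing all rows and columns save a few near the top-left corner so that the resulting induced sublattice $L'$ becomes non-simple (the missing interior near-corner forces one element of $L'$ to be comparable to all others); Lemma~\ref{tensor} then gives $\beta_{14}(I_{L'})\neq 0$, and Corollary~\ref{embed} yields $\beta_{14}(I_L)\neq 0$. The case $(m-1,1)\notin L$ is symmetric. If (i) fails so that both $(m,0)$ and $(0,n)$ are absent, restricting $L$ to the columns $\{0,m\}$ and rows $\{0,n\}$ (applying Corollary~\ref{embed} twice) produces an induced sublattice isomorphic to a non-simple ordinal sum of two two-element chains, and Lemma~\ref{tensor} again applies.

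For the sufficiency I would assume (i) and (ii) and prove directly that $\tilde{H}_1(\Delta_h,K)=0$ for every multi-degree $h$ with $|h|=4$. The strategy is that any two facets of $\Delta_h$ are connected by a sequence of elementary swaps $\{\alpha,\beta\}\mapsto\{\alpha\cap\beta,\alpha\cup\beta\}$, and every $1$-cycle in the $1$-skeleton of $\Delta_h$ can be bounded by $2$-simplices: conditions (i) and (ii) guarantee that the meets and joins required to fill these cycles lie in $L$, where the ``near-corner'' elements $(1,n-1)$ and $(m-1,1)$ provide precisely the vertices needed to close the squares of $\Delta_h$ involving boundary elements of the bounding rectangle. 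I would formalize this by induction on $|L|$, peeling off an extreme row or column whose existence is ensured by (i) and (ii) so that the smaller lattice still satisfies the hypotheses and remains a planar distributive lattice. The main obstacle is precisely this last step: verifying, by a clean case analysis of the possible multi-degrees $h,$ that $\Delta_h$ is simply connected whenever (i) and (ii) hold, and checking that the inductive reduction preserves simplicity, planarity, and both (i) and (ii).
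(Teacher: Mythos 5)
Your overall framing coincides with the paper's: reduce to $\beta_{14}(I_L)=0$ via the quadratic Gr\"obner basis, use induced (homologically pure) sublattices and Corollary~\ref{embed} for necessity, and squarefree divisor complexes for sufficiency. However, there are two genuine gaps. For the necessity of (ii), your plan to shrink $L$ to a \emph{non-simple} induced sublattice and invoke Lemma~\ref{tensor} does not work. If $(1,n-1)\notin L$ then automatically $(0,n)\notin L$ (indeed $(0,n),(m-1,n-1)\in L$ would force $(0,n-1)$ and hence $(1,n-1)=(0,n-1)\vee(1,1)$ into $L$), so by (i) one may assume $(m,0)\in L$, and the minimal configuration reachable by deleting rows and columns is the L-shaped lattice of Figure~\ref{latticelema2}, which is \emph{simple}. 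Its non-simple induced sublattices all have one ordinal summand equal to a chain (for instance, deleting the last column of the L-shape leaves a cut pair whose upper part is a chain), so the tensor argument of Lemma~\ref{tensor} gives $\beta_{24}(R[L'])=\beta_{24}(R[L_1])$ with $I_{L_1}$ having a linear resolution, i.e.\ it certifies nothing. The paper has to verify $\beta_{14}\neq 0$ for the lattice of Figure~\ref{latticelema2} by a direct computation, and your argument contains no substitute for that step. A milder instance of the same issue occurs when (i) fails: one of the three reduced shapes in Figure~\ref{LAB} is again simple and is handled through an induced cyclic sublattice with two squares (a complete intersection of two quadrics, which has a degree-$4$ first syzygy), not through Lemma~\ref{tensor}.

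For sufficiency, your plan is essentially a restatement of the goal. The assertion that conditions (i) and (ii) ``guarantee that the meets and joins required to fill these cycles lie in $L$'' is precisely what must be proved, and the proposed induction on $|L|$ by peeling off an extreme row or column is not shown to preserve simplicity, planarity, or conditions (i)--(ii), nor are base cases identified. The paper instead fixes a multidegree $h=u_{i_1j_1}u_{i_2j_2}u_{i_3j_3}u_{i_4j_4}$, passes to the bounding rectangle $\MQ$ of the four points, and applies Corollary~\ref{refinement}: if $\MQ$ is degenerate then $\Delta_h$ is a simplex; if all four corners of $\MQ$ lie in $L$ one reduces to a full interval $[(i,j),(k,\ell)]$, which is linearly related; and if one corner is missing, conditions (i) and (ii) force the relevant induced sublattice to be one of the three explicit lattices of Figure~\ref{caseone}, whose linear relatedness is then checked directly. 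Without this localization to the bounding rectangle, or an equivalent reduction to an explicit finite list of configurations, the homological claim at the heart of your sufficiency argument remains unproved.
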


The only if part of the proof of this theorem follows from the following lemmas.

\begin{Lemma}\label{lema1}
Let $L$ be a lattice as in the statement of Theorem~\ref{linrel} and assume that both vertices $(m,0)$ and $(0,n)$ do not belong to 
$L.$ Then $\beta_{14}(I_L)\neq 0.$
\end{Lemma}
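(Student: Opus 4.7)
The plan is to exhibit an induced sublattice $L'\subseteq L$ whose Hibi ideal satisfies $\beta_{14}(I_{L'})\neq 0$, and then apply Corollary~\ref{embed} to conclude $\beta_{14}(I_L)\geq\beta_{14}(I_{L'})>0$. My target for $L'$ is the two-square-diagonal lattice
\[
L_0=\{(0,0),(1,0),(0,1),(1,1),(2,1),(1,2),(2,2)\},
\]
whose Hibi ideal is generated by the two binomials $f_1=x_{(1,0)}x_{(0,1)}-x_{(0,0)}x_{(1,1)}$ and $f_2=x_{(2,1)}x_{(1,2)}-x_{(1,1)}x_{(2,2)}$. Since the supports of $f_1,f_2$ are disjoint and $I_{L_0}$ has codimension two, this pair forms a regular sequence, so the Koszul complex is the minimal free resolution of $I_{L_0}$ and $\beta_{14}(I_{L_0})=1$. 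Moreover, attaching a chain at the top or the bottom of a distributive lattice is a polynomial extension of its Hibi ring (no new incomparable pairs arise), so lattices of the form ``$L_0$ plus chains'' have the same Hibi-ideal Betti numbers as $L_0$; in particular $\beta_{14}=1$.

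To build $L'$, I would set $a_i=\min\{j:(i,j)\in L\}$ and $b_i=\max\{j:(i,j)\in L\}$ for $0\leq i\leq m$; both sequences are non-decreasing because $L$ is a distributive sublattice of $\NN^2$, and the hypotheses force $a_0=a_1=0$, $b_{m-1}=b_m=n$, $1\leq b_0\leq n-1$, and $1\leq a_m\leq n-1$. In the favorable case where there exist $c\in[1,m-1]$ with $a_c=0,\,b_c=n$ and $r\in[1,n-1]$ with $a_m\leq r\leq b_0$, I set $L'=L\cap(\{0,c,m\}\times\{0,r,n\})$. A direct check, using $(0,n),(m,0)\notin L$ together with the inequalities on $r$ and $c$, shows that $L'$ consists exactly of the seven points $(0,0),(c,0),(0,r),(c,r),(c,n),(m,r),(m,n)$, and as an abstract lattice it is isomorphic to $L_0$.

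When no such pair $(c,r)$ exists, I would first remove a suitable set of rows or columns to reduce $L$ to an induced sublattice for which the above construction applies. The idea is that the eight corner vertices always lie in $L$ while $(m,0),(0,n)\notin L$, so one can collapse the ``gap'' between the column-$0$ range $[0,b_0]$ and the column-$m$ range $[a_m,n]$ (when $a_m>b_0$), and the analogous gap between ``full-bottom'' and ``full-top'' columns (when no full column exists); this produces an induced sublattice of the form $L_0$ augmented by chains at the top and/or the bottom, whose Hibi ideal still has $\beta_{14}=1$. Applying Corollary~\ref{embed} then yields $\beta_{14}(I_L)\geq\beta_{14}(I_{L'})\geq 1$.

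The main technical obstacle will be the combinatorial verification that the reduction in the non-favorable case always terminates at an induced sublattice of the described form. This involves a case analysis based on the monotonicity of $(a_i),(b_i)$ together with the corner hypotheses $(0,0),(1,0),(0,1),(1,1),(m-1,n-1),(m,n-1),(m-1,n),(m,n)\in L$ and $(m,0),(0,n)\notin L$, and it is the step where the non-membership of the two distinguished corners is used crucially.
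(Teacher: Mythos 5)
Your overall strategy coincides with the paper's: exhibit an induced sublattice whose Hibi ideal is a complete intersection of two quadrics (so that the Koszul complex gives $\beta_{14}=1$) and lift the syzygy to $I_L$ via Corollary~\ref{embed}. Your favorable case is correct, and $\beta_{14}(I_{L_0})=1$ is right, though not for the reason you give: $f_1$ and $f_2$ both involve $x_{(1,1)}$, so their supports are not disjoint; what makes them a regular sequence is that their initial terms are coprime (equivalently, $\height I_{L_0}=2$ while $I_{L_0}$ has two generators).

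The genuine gap is the non-favorable case, and as sketched it cannot be completed, because the target family ``$L_0$ augmented by chains at the top and/or the bottom'' is too narrow. Consider $L=\{(i,j): 0\le i\le 1,\ 0\le j\le 2\}\cup\{(i,j): 2\le i\le 3,\ 1\le j\le 3\}\subset[(0,0),(3,3)]$. This is a simple planar distributive lattice containing both corner squares, with $(3,0),(0,3)\notin L$ and no full column. The only points of $[(0,0),(3,3)]$ missing from $L$ are $(2,0),(3,0),(0,3),(1,3)$; a short check (the absent corner $(c_3,r_1)$ forces $r_1=0$ and $c_3\ge 2$, the absent corner $(c_1,r_3)$ forces $r_3=3$ and $c_1\le 1$, then $(c_2,0)\in L$ forces $c_2=1$, and $(c_2,r_3)=(1,3)\notin L$) shows that no choice of three columns and three rows produces an induced copy of $L_0$, hence none of $L_0$ with chains attached at the ends either. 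What $L$ does contain (keep all columns and rows $\{0,1,3\}$) is the ordinal sum of two squares, i.e.\ two squares separated by a cut edge --- a chain inserted in the \emph{middle} of the diagonal. That witness is still a complete intersection of two quadrics, so the lemma survives, but you must enlarge your target class to cyclic sublattices with two squares allowing cut edges between them (equivalently, admit the non-simple configurations handled by Lemma~\ref{tensor}) and then actually carry out the terminating case analysis you defer. The paper makes that analysis finite by first deleting the columns $\{2,\ldots,m-2\}$ and rows $\{2,\ldots,n-2\}$, confining everything to a $4\times 4$ grid before distinguishing cases.
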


\begin{proof} 
Let $A=\{2,\ldots,m-2\}$ and $L_A$ the corresponding induced lattice. The lattice $L_A$ may be now framed in the interval $[(0,0),(3,n)].$
We choose now the set $B=\{2,\ldots,n-2\}\subset [n]$ and consider the induced sublattice $L_{AB}$ of $L_A$. The lattice $L_{AB}$ is isomorphic to one of the form displayed in Figure~\ref{LAB}:
\begin{figure}[hbt]
\begin{center}
\psset{unit=0.5cm}
\begin{pspicture}(9,-2)(5,5)
\rput(-4,-1){
\rput(0,0){$\bullet$}
\rput(2,0){$\bullet$}
\rput(0,2){$\bullet$}
\rput(2,2){$\bullet$}
\rput(3,3){$\bullet$}
\rput(5,5){$\bullet$}
\rput(3,5){$\bullet$}
\rput(5,3){$\bullet$}
\pspolygon(0,0)(2,0)(2,2)(0,2)
\pspolygon(3,3)(5,3)(5,5)(3,5)
}
\rput(3,-1){
\rput(0,0){$\bullet$}
\rput(2,0){$\bullet$}
\rput(0,2){$\bullet$}
\rput(2,2){$\bullet$}
\rput(0,4){$\bullet$}
\rput(2,4){$\bullet$}
\rput(4,4){$\bullet$}
\rput(6,6){$\bullet$}
\rput(6,4){$\bullet$}
\rput(4,6){$\bullet$}
\pspolygon(0,0)(2,0)(2,4)(0,4)
\pspolygon(4,4)(6,4)(6,6)(4,6)
\psline(0,2)(2,2)
\psline(2,4)(4,4)
}
\rput(11,-1){
\rput(0,0){$\bullet$}
\rput(2,0){$\bullet$}
\rput(0,2){$\bullet$}
\rput(2,2){$\bullet$}
\rput(0,4){$\bullet$}
\rput(2,4){$\bullet$}
\rput(4,4){$\bullet$}
\rput(6,6){$\bullet$}
\rput(6,4){$\bullet$}
\rput(4,6){$\bullet$}
\rput(6,2){$\bullet$}
\rput(4,2){$\bullet$}
\pspolygon(0,0)(2,0)(2,4)(0,4)
\pspolygon(4,2)(6,2)(6,6)(4,6)
\pspolygon(2,2)(4,2)(4,4)(2,4)
\psline(0,2)(2,2)
\psline(4,4)(6,4)
}
\end{pspicture}
\end{center}
\caption{}
\label{LAB}
\end{figure}

In the first two cases, it is clear, by Lemma~\ref{tensor}, that $\beta_{14}(I_{L_{AB}})\neq 0.$ In the last case, one may easily see that $L_{AB}$ contains an induced cyclic sublattice with two squares, thus 
 $\beta_{14}(I_{L_{AB}})\neq 0.$ Hence, by applying Corollary~\ref{embed}, we get $\beta_{14}(I_L)\neq 0.$
\end{proof}

\begin{Lemma}\label{lema2}
Let $L$ be a lattice as in the statement of Theorem~\ref{linrel} and assume that $(m,0)\in L$ and $(0,n)\notin L$. If $I_L$ is linearly related, then $(1,n-1)\in L.$
\end{Lemma}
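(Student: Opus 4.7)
The plan is to argue by contradiction, following exactly the strategy of Lemma~\ref{lema1}. Suppose $(1,n-1)\notin L$; the goal is to exhibit an induced sublattice $L'$ of $L$ with $\beta_{14}(I_{L'})\neq 0$, and then invoke Corollary~\ref{embed} to transfer this to $\beta_{14}(I_L)\neq 0$, contradicting the assumption that $I_L$ is linearly related.

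First, I would analyse the geometry forced by the hypotheses. The conditions $(0,n)\notin L$ and $(1,n-1)\notin L$ produce a staircase gap in the upper-left of $L$: if $h_0$, $h_1$ denote the largest integers with $(0,h_0)\in L$, $(1,h_1)\in L$, then $h_0\leq n-1$ and $h_1\leq n-2$, while column $m-1$ still reaches row $n-1$ by the hypothesis that the top-right square is present, and $(m,0)\in L$ by assumption. In particular, the minimum index $k$ with $(k,n)\in L$ satisfies $k\geq 2$.

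Next I would pass to an induced sublattice $L'=L_{A\cup B}$, where $A\subseteq\{2,\ldots,m-2\}$ and $B\subseteq\{2,\ldots,n-2\}$ are chosen (and possibly empty if $m$ or $n$ is small) so that, after relabelling, $L'$ fits inside $[(0,0),(3,3)]$ while retaining the five corner points $(0,0),(1,1),(2,2),(3,3),(3,0)$ together with the gap $(0,3),(1,2)\notin L'$. The sublattice axioms (meet and join with $(3,0)$ and $(3,3)$) then force the entire bottom row $(0,0),(1,0),(2,0),(3,0)$ and right column $(3,0),(3,1),(3,2),(3,3)$ to lie in $L'$. By Corollary~\ref{embed} it is enough to prove that $\beta_{14}(I_{L'})\neq 0$.

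The final step is a short case analysis over the possible shapes of $L'$, distinguished by which of the remaining positions $(0,1),(0,2),(1,3),(2,3)$ are present. In each case one verifies that either $L'$ fails to be simple, so that Lemma~\ref{tensor} supplies $\beta_{14}(I_{L'})\neq 0$, or $L'$ contains an induced cyclic sublattice with at least two squares, forcing $\beta_{14}(I_{L'})\neq 0$ via the Koszul-regular-sequence argument used in the discussion preceding Theorem~\ref{planar} (and one more application of Corollary~\ref{embed} from the cyclic sublattice into $L'$). The asymmetric gap at $(1,n-1)$, together with the intact bottom-right corner at $(m,0)$, guarantees that one of these two features always appears in $L'$. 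The main obstacle is keeping the case analysis tidy; the hypothesis $(m,0)\in L$ is essential here, since it prevents a degeneration into the fully symmetric setting of Lemma~\ref{lema1} and forces precisely one ``bad'' configuration, not several independent ones, to surface in $L'$.
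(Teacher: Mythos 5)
Your overall strategy coincides with the paper's: argue by contradiction, strip away interior rows and columns to land in an induced sublattice of $[(0,0),(3,3)]$ that still carries the two corner squares, the bottom row, the right column, and the gap $(0,3),(1,2)\notin L'$, and then invoke Corollary~\ref{embed}. Up to that point the reduction is sound (and your observation that $(m,0)\in L$ forces the whole bottom row and right column to survive is correct). The problem is your final step. You claim that every resulting configuration either fails to be simple or contains an induced cyclic sublattice with at least two squares. This dichotomy is false for the critical configuration, namely the L-shaped lattice $L_{AB}=[(0,0),(3,3)]\setminus\{(0,2),(1,2),(0,3),(1,3)\}$ of Figure~\ref{latticelema2}. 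That lattice is simple (there is no pair $\alpha<\beta$ splitting it, since $(3,0)$ and $(3,1)$ are incomparable with the elements above the bend), and it contains no induced cyclic sublattice with two squares: such a sublattice would require surviving columns $c_0<c_1<c_2$ and rows $r_0<r_1<r_2$ with $(c_2,r_0)\notin L_{AB}$, but the only missing vertices of $L_{AB}$ lie in columns $0,1$ while $c_2\geq 2$, since column $0$ cannot be deleted. Any induced sublattice you can actually extract from it is either an interval, a non-simple lattice whose ``upper'' factor is a polynomial ring (so Lemma~\ref{tensor} gives nothing), or the lattice of Figure~\ref{nonpure}, whose resolution $0\to S(-5)\to S(-3)^5\to S(-2)^5\to I\to 0$ shows it \emph{is} linearly related. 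So neither escape route is available.

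This is exactly why the paper does not argue structurally at this point: it reduces to the single lattice $L_{AB}$ of Figure~\ref{latticelema2} and then verifies $\beta_{14}(I_{L_{AB}})\neq 0$ by a direct computer calculation. To repair your proof you would have to supply that base case explicitly, either by machine computation or by exhibiting a degree-$4$ multidegree $h$ whose squarefree divisor complex $\Delta_h$ has nonvanishing $\tilde{H}_1(\Delta_h,K)$ (Proposition~\ref{bh}); the structural dichotomy you propose cannot replace it.
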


\begin{proof}
Assume that $(1,n-1)\notin L.$ We show that $\beta_{14}(I_L)\neq 0.$ Proceeding as in the proof of Lemma~\ref{lema1}, we get an induced sublattice $L_{AB}$ of $L$ which is displayed in Figure~\ref{latticelema2}. 

\begin{figure}[hbt]
\begin{center}
\psset{unit=0.5cm}
\begin{pspicture}(9,-2)(5,4)
\rput(4,-2){
\rput(0,0){$\bullet$}
\rput(2,0){$\bullet$}
\rput(0,2){$\bullet$}
\rput(2,2){$\bullet$}
\rput(4,0){$\bullet$}
\rput(4,2){$\bullet$}
\rput(4,4){$\bullet$}
\rput(6,0){$\bullet$}
\rput(6,4){$\bullet$}
\rput(4,6){$\bullet$}
\rput(6,2){$\bullet$}
\rput(6,6){$\bullet$}
\pspolygon(0,0)(6,0)(6,2)(0,2)
\pspolygon(4,0)(6,0)(6,6)(4,6)
\psline(2,0)(2,2)
\psline(4,4)(6,4)
}
\end{pspicture}
\end{center}
\caption{}
\label{latticelema2}
\end{figure}
One checks with a computer that $\beta_{14}(I_{L_{AB}})\neq 0$ which implies the desired conclusion. 
\end{proof}

\begin{proof}[Proof of Theorem~\ref{linrel}]
Lemma~\ref{lema0}, Lemma~\ref{lema1}, and Lemma~\ref{lema2} complete the "only if" part of the proof. 

It remains to prove that if $L$ satisfies conditions (i) and (ii) in the statement of the theorem, then $I_L$ is linearly related.

To begin with, we recall from \cite{Q} that the ring $S/I_L$ which is isomorphic to $K[H]$ may be viewed as a semigroup ring  $K[s_1,
\ldots, s_m,t_1,\ldots,t_n]$ generated by the monomials $u_{ij}=s_it_j$ where $(i,j)\in L.$ With this interpretation of $K[H]$ in mind, 
we will use Corollary~\ref{refinement} to show that $I_L$ is linearly related.

Let $u=u_{i_1j_1}u_{i_2j_2}u_{i_3j_3}u_{i_4j_4}$ be an element of $K[H]$ viewed as a subring of the polynomial ring $K[s_1,
\ldots, s_m,t_1,\ldots,t_n]$  and let
$
i=\min_{q}\{i_q\}, k=\max_{q}\{i_q\}, j=\min_{q}\{j_q\}, \text{ and } \ell=\max_{q}\{j_q\}.
$
Therefore, all the points $h_q$ lie in the (possible degenerate) rectangle $\MQ$ of vertices
$(i,j), (k,j), (i,\ell),(k,\ell).$ If $\MQ$ is degenerate, that is, all the vertices of $Q$ are contained in a vertical or horizontal line segment in $L$, then $\beta_{1h}(I_{L})=0$ since in this case the simplicial complex $\Delta_h$ is just a simplex. Let us now consider $\MQ$ non-degenerate. If all the vertices of $\MQ$ belong to $L$, then the interval $L^\prime=[(i,j),(k,\ell)]$  is an induced sublattice  of $L$. Therefore, by Corollary~\ref{refinement}, we have
$\beta_{1h}(I_{L})=\beta_{1h}(I_{L^\prime})=0$, the latter equality being true since $L^\prime$ is linearly related. The only case left to be discussed 
is that one when one corner of the rectangle $\MQ$ does not belong to $L.$ In this case one, one esaily sees that, by Corollary~\ref{refinement}, 
$\beta_{14}(I_L)$ coincides with $\beta_{14}(I_{L^\prime})$ where $L^\prime$ is an induced sublattice of $L$ isomorphic to one displayed in Figure~\ref{caseone}.

\begin{figure}[hbt]
\begin{center}
\psset{unit=0.6cm}
\begin{pspicture}(4.5,-6)(4.5,3)

\rput(-4,0.5){
\pspolygon(2.8,-1)(2.8,0)(3.8,0)(3.8,-1)
\pspolygon(2.8,0)(2.8,1)(3.8,1)(3.8,0)
\pspolygon(3.8,-1)(3.8,0)(4.8,0)(4.8,-1)
\pspolygon(5.8,-1)(5.8,0)(4.8,0)(4.8,-1)
\pspolygon(4.8,0)(4.8,1)(5.8,1)(5.8,0)
\pspolygon(4.8,1)(4.8,2)(5.8,2)(5.8,1)
\pspolygon(3.8,1)(3.8,2)(4.8,2)(4.8,1)
\pspolygon(3.8,0)(3.8,1)(4.8,1)(4.8,0)
}
\rput(0.3,-1){(a)}

\rput(4.5,0.5){
\pspolygon(2.8,-1)(2.8,0)(3.8,0)(3.8,-1)
\pspolygon(2.8,0)(2.8,1)(3.8,1)(3.8,0)
\pspolygon(3.8,-1)(3.8,0)(4.8,0)(4.8,-1)
\pspolygon(5.8,-1)(5.8,0)(4.8,0)(4.8,-1)
\pspolygon(4.8,0)(4.8,1)(5.8,1)(5.8,0)
\pspolygon(4.8,1)(4.8,2)(5.8,2)(5.8,1)
\pspolygon(3.8,0)(3.8,1)(4.8,1)(4.8,0)
}
\rput(8.8,-1){(b)}

\rput(0.5,-4.5){
\pspolygon(2.8,-1)(2.8,0)(3.8,0)(3.8,-1)
\pspolygon(4.8,0)(4.8,1)(5.8,1)(5.8,0)
\pspolygon(3.8,-1)(3.8,0)(4.8,0)(4.8,-1)
\pspolygon(5.8,-1)(5.8,0)(4.8,0)(4.8,-1)
\pspolygon(4.8,1)(4.8,2)(5.8,2)(5.8,1)
\pspolygon(3.8,0)(3.8,1)(4.8,1)(4.8,0)
}
\rput(4.8,-6){(c)}
\end{pspicture}
\end{center}
\caption{}\label{caseone}
\end{figure}
\end{proof}

One may check with a computer algebra system that all lattices displayed in Figure~\ref{caseone} are linearly related, hence they do not have any relation in degree $h.$ Just one final word for $m=2$. In this case, we find an indiced sublattice of $L$ isomorphic to an induced sublattice of $L^\prime$, hence, again, we do not find any relation of $I_L$ in degree $4.$

\subsection{Hibi ideals with pure resolutions}
\label{pure}

Let $L$ be a planar distributive lattice, $L\subset [(0,0),(m,n)]$ with $m,n\geq 1.$. As in the previous subsection, we assume that the squares 
$[(0,0),(1,1)]$ and $[(m-1,n-1),(m,n)]$ belong to $L.$ In the last part of this section we would like find under which conditions on $L$ the ideal 
$I_L$ has a pure resolution. 

By Corollary~\ref{linear resolution}, we know that $I_L$ has a linear resolution if and only if $m=1$ or $n=1$. 
Therefore, we may consider $m,n\geq 2.$ 

We have already seen in Subsection~\ref{regsubsection} that if $C$ is a cyclic lattice, then $I_C$ has a pure 
resolution given by the Koszul complex of the sequence of its binomial generators. 
In addition, let us observe that if $L$ is not simple, then $R[L]$ may be expressed as $R[L]\cong R[L_1]\otimes R[L_2]$ where $L_1=\MI(P_1)$ and $L_2=\MI(P_2)$ with $P_1,P_2$ as they have been defined in the proof of Lemma~\ref{tensor}. Hence, if at least one of the ideals $I_{L_1}$ or $I_{L_2}$ has linear relations, then $I_L$ does not have a pure resolution since we have at least two distinct shifts in degree $1$ for $I_L.$ Therefore, from now on, we may assume that $L$ is a simple lattice.

If $L$ is not cyclic, then, by removing appropriate rows and columns of $L,$ we get an induced sublattice of $L$ of the form displayed in Figure~\ref{nonpure}.
\begin{figure}[hbt]
\begin{center}
\psset{unit=0.6cm}
\begin{pspicture}(-3,0)(7,4)

\rput(0,0){$\bullet$}
\rput(2,0){$\bullet$}
\rput(0,2){$\bullet$}
\rput(2,2){$\bullet$}
\rput(4,0){$\bullet$}
\rput(4,2){$\bullet$}
\rput(4,4){$\bullet$}
\rput(2,4){$\bullet$}
\pspolygon(0,0)(4,0)(4,2)(0,2)
\pspolygon(2,0)(4,0)(4,4)(2,4)
\end{pspicture}
\end{center}
\caption{}\label{nonpure}
\end{figure}

The resolution of $I_{L^\prime}$, where $L^\prime$ is the lattice of Figure~\ref{nonpure}, is the following:
\[
0\to S(-5)\to S(-3)^5\to S(-2)^5\to I_{L^\prime}\to 0.
\]

Hence, if $L$ is not cyclic, then $\beta_{13}(I_L)\neq 0.$ This implies that if $I_L$ has a pure resolution, then $I_L$ must be  linearly related, hence the lattice $L$ has the shape indicated in Theorem~\ref{linrel}.

Now we state the main result of this subsection.

\begin{Theorem}\label{pureres}
Let $L$ be a simple planar distributive lattice. Then $I_L$ has a pure resolution if and only if one of the following conditions holds:
\begin{itemize}
	\item [(i)] $L=\MI(P)$ where $P$ consists of a chain and an isolated vertex;
	\item [(ii)] $L$ is a cyclic lattice;
	\item [(iii)] $L$ is isomorphic either to the lattice displayed in Figure~\ref{nonpure} or to that one displayed in Figure~\ref{caseiii}.
\end{itemize}
\end{Theorem}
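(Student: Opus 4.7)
The plan is to treat the two implications separately. For sufficiency, case (i) is immediate from Corollary~\ref{linear resolution}, which gives a $2$-linear resolution, and a linear resolution is automatically pure. For case (ii), if $L=C$ is cyclic with $r$ squares, then by the Gr\"obner basis of Theorem~\ref{GBHibi} the initial ideal $\ini_<(I_C)$ is generated by an $r$-element monomial regular sequence (one incomparable pair per square), so $I_C$ itself is a binomial complete intersection; its minimal free resolution is the Koszul complex on these $r$ generators, pure with shifts $2,4,\dots,2r$. For case (iii) I would verify pureness by directly reading off the minimal graded free resolution of the two specified lattices, either from the Betti diagram computed on a computer algebra system or by combining the formula $\reg R[L]=|P|-\rank P -1$ of Theorem~\ref{regularity} with $\projdim R[L]=|L|-|P|-1$ to pin down the top shift.

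For necessity, assume $I_L$ admits a pure resolution with shifts $2=d_0<d_1<\cdots<d_p$. First I would rule out non-simple lattices: if $P$ decomposes so that $R[L]\cong R[\MI(P_1)]\tensor R[\MI(P_2)]$, the tensor product of the two minimal resolutions has shifts of the form $a+b$ with $a,b$ ranging over shifts of the factors, and a short bookkeeping shows that the resulting Betti diagram is pure only when one factor is a polynomial ring, which collapses into case (i). Assuming $L$ is simple, if $L$ is not cyclic then the reduction described just above the statement produces an induced sublattice $L^\prime$ isomorphic to the lattice of Figure~\ref{nonpure}, whose explicit resolution $0\to S(-5)\to S(-3)^5\to S(-2)^5\to I_{L^\prime}\to 0$ combined with Corollary~\ref{embed} yields $\beta_{13}(I_L)\neq 0$. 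Pureness then forces $d_1=3$, so $I_L$ is linearly related, and Theorem~\ref{linrel} restricts the global shape of $L$ to the family with $(1,n-1),(m-1,1)\in L$ and at most one of $(m,0),(0,n)$ missing.

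It remains to enumerate, inside the class permitted by Theorem~\ref{linrel}, which lattices beyond the chain-plus-isolated-vertex of (i) and the two exceptional lattices of (iii) still support a pure resolution. The strategy is a systematic hunt for obstructive induced sublattices: any lattice strictly larger than the ones in (iii) that still meets the conditions of Theorem~\ref{linrel} should contain an induced sublattice whose minimal resolution exhibits two distinct shifts at some homological degree, breaking pureness via Corollary~\ref{embed}. Combining this with the constraints imposed on $p$ and $d_p$ by $\reg R[L]=|P|-\rank P-1$ and $\projdim R[L]=|L|-|P|-1$ should eliminate every remaining candidate. The main obstacle is precisely this finite but delicate case analysis inside the Theorem~\ref{linrel} family: one must exhibit an explicit short list of forbidden induced sublattices sufficient to exclude every lattice outside the claimed list, and verify that the two lattices in (iii) are exactly the ones that evade all such obstructions.
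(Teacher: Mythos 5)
Your argument tracks the paper's proof closely up to the last step: sufficiency of (i) and (ii), the tensor-product argument for non-simple lattices, the use of the lattice of Figure~\ref{nonpure} to show that a non-cyclic $L$ has $\beta_{13}(I_L)\neq 0$ and hence must be linearly related, and the resulting reduction to the family described in Theorem~\ref{linrel} are all exactly what the paper does. (Two minor remarks: the hypothesis already assumes $L$ simple, so your non-simple reduction is redundant; and your fallback idea of pinning down pureness in case (iii) from $\reg$ and $\projdim$ alone cannot work, since those invariants only constrain the extreme shifts $d_0$ and $d_p$, not the intermediate ones --- the direct computation of the resolution is what is actually needed, and is what the paper uses.)

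The genuine gap is the final enumeration, which you describe as a ``systematic hunt for obstructive induced sublattices'' and explicitly flag as the main obstacle without carrying it out. This is the substance of the necessity direction once Theorem~\ref{linrel} has been invoked, and it cannot be waved at: one must produce the explicit finite list of forbidden configurations and check that every lattice in the Theorem~\ref{linrel} family other than those in (i)--(iii) contains one of them as an \emph{induced} sublattice (so that Corollary~\ref{embed} applies). The paper does this by splitting into two cases. If $L$ is the full interval $[(0,0),(m,n)]$ with $m\geq 3$ or $n\geq 3$, deleting rows and columns yields an induced sublattice isomorphic to the full $3\times 2$ grid of Figure~\ref{case3}, whose ideal is checked (by computer) not to have a pure resolution; this forces $m=n=2$, i.e.\ the lattice of Figure~\ref{caseiii}. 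If instead $(0,n)\notin L$ (so $(1,n-1)\in L$ by Theorem~\ref{linrel}), then for $m\geq 3$ or $n\geq 3$ one finds an induced sublattice isomorphic to one of the two corner-deleted configurations of Figure~\ref{cornernonpure}, again checked to fail pureness; this forces $m=n=2$ and $L$ isomorphic to the lattice of Figure~\ref{nonpure}. Without exhibiting these specific obstructions and verifying that they cover all remaining cases, the proof is not complete.
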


\begin{figure}[hbt]
\begin{center}
\psset{unit=0.6cm}
\begin{pspicture}(-3,0)(7,4)

\rput(0,0){$\bullet$}
\rput(2,0){$\bullet$}
\rput(0,2){$\bullet$}
\rput(2,2){$\bullet$}
\rput(0,4){$\bullet$}
\rput(2,4){$\bullet$}
\rput(4,4){$\bullet$}
\rput(2,4){$\bullet$}
\rput(4,0){$\bullet$}
\rput(4,2){$\bullet$}
\pspolygon(0,0)(4,0)(4,4)(0,4)
\psline(0,2)(4,2)
\psline(2,0)(2,4)
\end{pspicture}
\end{center}
\caption{}\label{caseiii}
\end{figure}

\begin{proof}
The "if" part is already clear since one may check with a computer that the idea of the lattice pictured in Figure~\ref{caseiii} has a pure resolution. For the converse, let us consider a simple planar distributive lattice $L\subset [(0,0),(m,n)]$ such that 
$I_L$ has a linear resolution. 

If $m=1$ or $n=1$, then $L$ satisfies condition (i). 

Let $m,n\geq 2$ and assume that $L$ is not cyclic. We have to show that $L$ satisfies condition (iii). By the arguments given before the theorem, we know that $L$ must satisfy the conditions of 
Theorem~\ref{linrel}. 

If $L$ is the whole interval $[(0,0),(m,n)]$ and $m\geq 3$ or $n\geq 3$, then we may obtain an induced sublattice
isomorphic to the lattice displayed in Figure~\ref{case3} which has the property that $I_L$ does not have a pure resolution. This check can be done by using a computer.

\begin{figure}[hbt]
\begin{center}
\psset{unit=0.6cm}
\begin{pspicture}(-1,0)(7,4)

\rput(0,0){$\bullet$}
\rput(2,0){$\bullet$}
\rput(0,2){$\bullet$}
\rput(2,2){$\bullet$}
\rput(0,4){$\bullet$}
\rput(2,4){$\bullet$}
\rput(4,4){$\bullet$}
\rput(2,4){$\bullet$}
\rput(6,0){$\bullet$}
\rput(6,2){$\bullet$}
\rput(6,4){$\bullet$}
\rput(4,0){$\bullet$}
\rput(4,2){$\bullet$}
\pspolygon(0,0)(6,0)(6,4)(0,4)
\psline(0,2)(6,2)
\psline(2,0)(2,4)
\psline(4,0)(4,4)
\end{pspicture}
\end{center}
\caption{}\label{case3}
\end{figure}

Therefore, in this case we get $m=n=2$ and $L$ is the lattice given in Figure~\ref{caseiii}. 

Let us now suppose that $L$ does not contain the vertex $(0,n).$ Then, by Theorem~\ref{linrel}, $L$ contains the vertex $(1,n-1).$ If $m\geq 3$ or $n\geq 3$, then, by removing suitable rows and columns of $L$ we get an induced sublattice  isomorphic to one of those pictured in  Figure~\ref{cornernonpure}.

\begin{figure}[hbt]
\begin{center}
\psset{unit=0.8cm}
\begin{pspicture}(4.5,0)(4.5,2.5)

\rput(-4,0.5){
\rput(2.8,0){$\bullet$}
\rput(2.8,1){$\bullet$}
\rput(3.8,0){$\bullet$}
\rput(3.8,1){$\bullet$}
\rput(4.8,0){$\bullet$}
\rput(4.8,1){$\bullet$}
\rput(5.8,0){$\bullet$}
\rput(5.8,1){$\bullet$}
\rput(4.8,2){$\bullet$}
\rput(5.8,2){$\bullet$}
\rput(3.8,2){$\bullet$}
\pspolygon(2.8,0)(2.8,1)(3.8,1)(3.8,0)
\pspolygon(4.8,0)(4.8,1)(5.8,1)(5.8,0)
\pspolygon(4.8,1)(4.8,2)(5.8,2)(5.8,1)
\pspolygon(3.8,1)(3.8,2)(4.8,2)(4.8,1)
\pspolygon(3.8,0)(3.8,1)(4.8,1)(4.8,0)
}
\rput(0.3,-0.6){(a)}

\rput(4.5,0.5){
\rput(2.8,0){$\bullet$}
\rput(2.8,1){$\bullet$}
\rput(3.8,0){$\bullet$}
\rput(3.8,1){$\bullet$}
\rput(4.8,0){$\bullet$}
\rput(4.8,1){$\bullet$}
\rput(5.8,0){$\bullet$}
\rput(5.8,1){$\bullet$}
\rput(4.8,2){$\bullet$}
\rput(5.8,2){$\bullet$}
\pspolygon(2.8,0)(2.8,1)(3.8,1)(3.8,0)
\pspolygon(4.8,0)(4.8,1)(5.8,1)(5.8,0)
\pspolygon(4.8,1)(4.8,2)(5.8,2)(5.8,1)
\pspolygon(3.8,0)(3.8,1)(4.8,1)(4.8,0)
}
\rput(8.8,-0.6){(b)}
\end{pspicture}
\end{center}
\caption{}\label{cornernonpure}
\end{figure}

None of the lattice displayed above has an ideal with pure resolution as one may check with the computer. Hence, $I_L$ itself does not have a pure resolution. Therefore, in this last case, if $I_L$ has a pure resolution, then $L$ must  be isomorphic to the lattice displayed in Figure~\ref{caseiii}.

\end{proof}

\newpage

\end{document}